\documentclass[a4paper,reqno]{amsart}

\usepackage{graphicx}

%% The amssymb package provides various useful mathematical symbols
\usepackage{amssymb}
%% The amsthm package provides extended theorem environments
%% \usepackage{amsthm}

\usepackage{amsmath,amsthm,enumerate}
\usepackage{color}
\usepackage[format=plain,font=small,width=0.3\textwidth]{caption}
\usepackage{wrapfig}
\usepackage[%
reset,%
%two column,%
papersize={210truemm,297truemm},%
vmargin={2.8truecm},%
hmargin=2.8truecm%
]{geometry}

%%% 

\usepackage{ulem} %to use STRIKE OUT command \sout

\makeatletter
\def\@settitle{\begin{center}\baselineskip14\p@\relax\bfseries{\large\@title}\thispagestyle{empty}\end{center}}
%%% setauthors
\def\@setauthors{%
  \begingroup
  \def\thanks{\protect\thanks@warning}%
  \trivlist
  \centering\footnotesize \@topsep30\p@\relax
  \advance\@topsep by -\baselineskip
  \item\relax
  \author@andify\authors
  \def\\{\protect\linebreak}%
  {\authors}%
  \ifx\@empty\contribs
  \else
    ,\penalty-3 \space \@setcontribs
    \@closetoccontribs
  \fi
  \endtrivlist
  \endgroup
}
\def\maketitle{\par
  \@topnum\z@ % this prevents figures from falling at the top of page 1
  \@setcopyright
  \thispagestyle{firstpage}% this sets first page specifications
  %\uppercasenonmath\shorttitle% Make shorttitle non-uppercase mode
  \ifx\@empty\shortauthors \let\shortauthors\shorttitle
  \else \andify\shortauthors
  \fi
  \@maketitle@hook
  \begingroup
  \@maketitle
  \toks@\@xp{\shortauthors}\@temptokena\@xp{\shorttitle}%
  \toks4{\def\\{ \ignorespaces}}% defend against questionable usage
  \edef\@tempa{%
    \@nx\markboth{\the\toks4
      \@nx%\MakeUppercase
      {\the\toks@}}{\the\@temptokena}}%
  \@tempa
  \endgroup
  \c@footnote\z@
  \@cleartopmattertags
}
\makeatother

\newcommand{\out}[1]{}

\newtheorem{Proposition}{Proposition}[section]
\newtheorem{Theorem}[Proposition]{Theorem}
\newtheorem{Lemma}[Proposition]{Lemma}

\theoremstyle{definition}

\newtheorem{Definition}[Proposition]{Definition}
\newtheorem{Remark}[Proposition]{Remark}

\newcommand{\set}[2]{\left\{#1 ;\; #2 \right\}}

\newcommand{\pmat}[1]{\begin{pmatrix}#1\end{pmatrix}}

\newcommand{\R}{\mathbb{R}}

\newcommand{\C}{\mathbb{C}}
\newcommand{\Z}{\mathbb{Z}}

\newcommand{\CS}[2]{\mathrm{CS}_{#1}(#2)}
\newcommand{\domainF}[1][f_{\kappa,\gamma}]{\mathcal{D}(#1)}

\newcommand{\sgn}{\mathsf{s}}

\newcommand{\ds}{\displaystyle}

\newcommand{\bl}[1]{\textcolor{blue}{#1}}

\begin{document}
\title{
%Eigenvalue Distributions of Wigner and Wishart Ensembles of Vinberg Matrices
Analysis on Lambert-Tsallis functions
%\thanks{Grants or other notes
%about the article that should go on the front page should be
%placed here. General acknowledgments should be placed at the end of the article.}
}
%\subtitle{Do you have a subtitle?\\ If so, write it here}

%\titlerunning{Short form of title}        % if too long for running head

\author{Hideto Nakashima}
\address[Hideto Nakashima]{Graduate School of Mathematics, Nagoya University, Furo-cho, Chikusa-ku, Nagoya 464-8602, Japan}
\email{h-nakashima@math.nagoya-u.ac.jp}  
\author{Piotr Graczyk}
\address[Piotr Graczyk]{Laboratoire de Math{\'e}matiques LAREMA, Universit{\'e} d'Angers 2, boulevard Lavoisier, 49045 Angers Cedex 01, France}
\email{piotr.graczyk@univ-angers.fr}
%\authorrunning{Short form of author list} % if too long for running head

\begin{abstract}
    In this paper, we study the Lambert-Tsallis $W$ function, which is a generalization of the Lambert $W$ function with two real parameters.
    We give a condition on the parameters such that there exists a complex domain touching zero on boundary
    which is mapped bijectively to the upper half plane by the Lambert-Tsallis function.
\end{abstract}

\maketitle

\section*{Introduction}
The Lambert $W$ function, which is the multivalued inverse of the function $z\mapsto ze^z$ has many applications in many areas of mathematics.
In a theory of random matrices,
it appears in a formula of a eigenvalue distribution of a certain Wishart-type ensemble (cf.\ Cheliotis~\cite{Cheliotis}).
In the previous paper~\cite{NG2020_1},
we found that,
for a certain class of Wishart-type ensembles,
the corresponding eigenvalue distributions can be described by using the main branch of the inverse function of $z\mapsto \frac{z}{1+\gamma z}\bigl(1+\frac{z}{\kappa}\bigr)^\kappa$, where $\gamma,\kappa$ are real parameters.
The aim of this paper is to give a complete analysis of this function, which we call the Lambert-Tsallis function.

\section{Preliminaries}

For a non zero real number $\kappa$, we set
\[
\exp_\kappa(z):=\left(1+\frac{z}{\kappa}\right)^\kappa\quad(1+\frac{z}{\kappa}\in\C\setminus\R_{\le 0}),
\]
where we take the main branch of the power function when $\kappa$ is not integer.
If $\kappa=\frac{1}{1-q}$, then it is exactly the so-called Tsallis \textit{$q$-exponential function}
(cf.\ \cite{AmariOhara2011, ZNS2018}).
By virtue of
$\ds\lim_{\kappa\to\infty}{\exp_\kappa(z)}=e^z$,
we regard $\exp_\infty(z)=e^z$.

For two real numbers $\kappa,\gamma$ such that $\kappa\ne 0$,
we introduce a holomorphic function 
$f_{\kappa,\gamma}(z)$,
which we call \textit{generalized Tsallis function}, by
\[
f_{\kappa,\gamma}(z):=\frac{z}{1+\gamma z}\exp_\kappa(z)\quad(1+\frac{z}{\kappa}\in\C\setminus\R_{\le 0}).
\]
Analogously to Tsallis $q$-exponential, 
we also consider $f_{\infty,\gamma}(z)=\frac{ze^z}{1+\gamma z}$ $(z\in\C)$.
In particular, $f_{\infty,0}(z)=ze^z$.
Let $\domainF$ be the domain of $f_{\kappa,\gamma}$, that is, 
if $\kappa$ is integer then $\domainF=\C\setminus\{-\frac1\gamma\}$
if $\kappa$ is not integer, then 
\[\domainF=\C\setminus\set{x\in\R}{1+\frac{x}{\kappa}\le 0,\text{ or }x=-\frac{1}{\gamma}}.\]

The purpose of this work is to study an inverse function to $f_{\kappa,\gamma}$ in detail.  
A multivariate inverse function of $f_{\infty,0}(z)=ze^z$ is called 
the Lambert $W$ function and studied
in \cite{Corless}. 
Hence, we call an inverse function to $f_{\kappa,\gamma}$ 
the {\it Lambert--Tsallis $W$ function.}
Since we have 
\begin{equation}\label{eq:derivative f}
f'_{\kappa,\gamma}(z)=\frac{\gamma z^2+\bigl(1+1/\kappa\bigr)z+1}{(1+\gamma z)^2}\left(1+\frac{z}{\kappa}\right)^{\kappa-1},
\end{equation}
the function $f_{\kappa,\gamma}(z)$ has the inverse function $w_{\kappa,\gamma}$ 
in a  neighborhood of $z=0$
by the fact $f'_{\kappa,\gamma}(0)=1\ne 0$.

Let $z=x+yi\in\C$ and we set for $\kappa\ne \infty$
\[
\theta(x,y):=\mathrm{Arg}\Bigl(1+\frac{z}{\kappa}\Bigr),
\]
where $\mathrm{Arg}(w)$ stands for the principal argument of $w$; $-\pi<\mathrm{Arg}(w)\le \pi$.
Since we now take the main branch of power function,
we have for $\kappa\ne \infty$
\[
\begin{array}{r@{\ }c@{\ }l}
\ds
\left(1+\frac{z}{\kappa}\right)^\kappa
&=&
\ds
\exp\left(
\kappa\left( \log\left|1+\frac{z}{\kappa}\right|+i{\mathrm{Arg}}\left(1+\frac{z}{\kappa}\right)\right)
\right)
=
\left(\Bigl(1+\frac{x}{\kappa}\Bigr)^2+\frac{y^2}{\kappa^2}\right)^{\tfrac{\kappa}{2}}
e^{i\kappa\theta(x,y)}\\
&=&
\ds
\left(\Bigl(1+\frac{x}{\kappa}\Bigr)^2+\frac{y^2}{\kappa^2}\right)^{\tfrac{\kappa}{2}}
\Bigl(\cos(\kappa\theta(x,y))+i\sin(\kappa\theta(x,y))\Bigr).
\end{array}
\]
If $\kappa=\infty$, then we regard $\kappa\theta(x,y)$ as $y$
because we have $\ds\lim_{\kappa\to+\infty}\exp_\kappa(z)=e^z=e^x(\cos y+i\sin y)$.
Since
\[
\frac{z}{1+\gamma z}
=
\frac{z(1+\gamma\bar z)}{{|1+\gamma z|^2}}
=
\frac{(x+\gamma x^2+\gamma y^2)+i(y+\gamma xy-\gamma xy)}{(1+\gamma x)^2+\gamma^2 y^2}
=
\frac{(x+\gamma x^2+\gamma y^2)+iy}{(1+\gamma x)^2+\gamma^2 y^2},
\]
we have
\begin{equation}\label{eq:calc f}
f_{\kappa,\gamma}(z)
=
\ds
\frac{
\Bigl((1+x/\kappa)^2+(y/\kappa)^2\Bigr)^{\tfrac{\kappa}{2}}
}{(1+\gamma x)^2+\gamma^2 y^2}
\left(
\begin{array}{@{\,}l@{\,}}
(x+\gamma x^2+\gamma y^2)\cos(\kappa\theta(x,y)) - y\sin(\kappa\theta(x,y))\\
\quad+i\bigl\{
    (x+\gamma x^2+\gamma y^2)\sin(\kappa\theta(x,y)) + y\cos(\kappa\theta(x,y)) 
\bigr\}
\end{array}
\right).
\end{equation}
Then, $f_{\kappa,\gamma}(z)\in\R$ implies
\begin{equation}\label{eq:implicit}
(x+\gamma x^2+\gamma y^2)\sin(\kappa\theta(x,y)) + y\cos(\kappa\theta(x,y))=0.
\end{equation}
If $\sin(\kappa\theta(x,y))=0$, then $\cos(\kappa\theta(x,y))$ does not vanish so that $y$ needs to be zero.
Thus, if $z=x+yi\in f_{\kappa,\gamma}(\R)$ with $y\ne 0$, then we have $\sin(\kappa\theta(x,y))\ne 0$.
Thus,
the equation \eqref{eq:implicit} for $\sin(\kappa\theta(x,y))\ne 0$ can be rewritten as
\begin{equation}
    \label{im}
    F(x,y):=(x+\gamma x^2+\gamma y^2) + y\cot(\kappa\theta(x,y))=0.
\end{equation}
For $y=0$, we set $\ds F(x,0):=\lim_{y\to0}F(x,y)$.
If $x=0$, then we have $\theta(0,y)=\mathrm{Arctan}(\frac{y}{\kappa})$
and hence $\ds F(0,0)=\lim_{y\to0}y\cot(\kappa\mathrm{Arctan}(\tfrac{y}{\kappa}))=1$.
Let us introduce the connected set $\Omega=\Omega_{\kappa,\gamma}$ by
\[\Omega=\Omega_{\kappa,\gamma}:=\set{z=x+yi\in \domainF}{F(x,y)>0}^\circ,\] 
where $A^\circ$ is the connected component of an open set $A\subset \C$ containing $z=0$.
Note that since $F$ is an even function on $y$,
the domain $\Omega$ is symmetric with respect to the real axis.
Set
\begin{equation}
    \label{def:mathcal S}
    \mathcal{S}:=\R\setminus f_{\kappa,\gamma}\bigl(\Omega_{\kappa,\gamma}\cap \R\bigr).
\end{equation}

\begin{Definition}
If there exists a unique holomorphic extension $W_{\kappa,\gamma}$
of $w_{\kappa,\gamma}$ to $\C \setminus \overline{\mathcal{S}}$,
then we call $W_{\kappa,\gamma}$
{\it the main branch} of Lambert-Tsallis $W$ function.
In this paper, 
we only study and use $W_{\kappa,\gamma}$ among other branches
so that we call $W_{\kappa,\gamma}$ the \textit{Lambert--Tsallis function} for short.
Note that in our terminology the Lambert-Tsallis  $W$ function  is multivalued
and the
Lambert-Tsallis function $W_{\kappa,\gamma}$ is single-valued.
\end{Definition}

Our goal is to prove the following theorem.

\begin{Theorem}
\label{theorem:lambert-tsallis}
Let $f_{\kappa,\gamma}$ be a generalized Tsallis function.
Then, 
there exists the main branch of Lambert-Tsallis function
if and only if
{\rm(i)} $0<\kappa<1$ and $\gamma\le 0$,
{\rm(ii)} $\kappa\ge 1$ and $\gamma\le\frac14(1+\frac1\kappa)^2$,
{\rm(iii)} $-1<\kappa<0$ and $\gamma\le \frac1\kappa$,
{\rm(iv)} $\kappa\le -1$ and $\gamma\le\frac14(1+\frac1\kappa)^2$,
and 
{\rm(v)} $\kappa=\infty$ and $\gamma\le \frac14$.
\end{Theorem}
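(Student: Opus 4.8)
The plan is to recast the existence of the main branch as the existence of a single conformal bijection and then to read the parameter conditions off the locations of the critical points and singularities of $f_{\kappa,\gamma}$. Write $\C^+$, $\C^-$ for the open upper and lower half planes and set $\Omega^+:=\Omega_{\kappa,\gamma}\cap\C^+$. Since $w_{\kappa,\gamma}$ is the local inverse at $z=0$ and $F$ is even in $y$, I would first prove that the main branch $W_{\kappa,\gamma}$ exists on $\C\setminus\overline{\mathcal S}$ if and only if $f_{\kappa,\gamma}$ restricts to a biholomorphism $\Omega^+\to\C^+$. In one direction, $f_{\kappa,\gamma}$ is real on $\partial\Omega^+$ (this is exactly how \eqref{im} and $\Omega$ were defined) and real-analytic across $\Omega_{\kappa,\gamma}\cap\R$, so a biholomorphism $\Omega^+\to\C^+$ glues with its conjugate by the Schwarz reflection principle into a biholomorphism $\Omega_{\kappa,\gamma}\to\C\setminus\overline{\mathcal S}$, whose inverse extends $w_{\kappa,\gamma}$; conversely a single-valued holomorphic inverse on $\C\setminus\overline{\mathcal S}$ forces $f_{\kappa,\gamma}$ to be injective on $\Omega_{\kappa,\gamma}$ with image $\C\setminus\overline{\mathcal S}$, hence injective on $\Omega^+$ with image $\C^+$. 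This reduces the theorem to deciding, for each $(\kappa,\gamma)$, whether $f_{\kappa,\gamma}\colon\Omega^+\to\C^+$ is a biholomorphism.

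For the sufficiency direction I would establish this biholomorphism by the boundary–correspondence principle. The boundary $\partial\Omega^+$ is the union of the real segment $\Omega_{\kappa,\gamma}\cap\R$ and the arc $\{F=0,\ y>0\}$ of \eqref{im}, and $f_{\kappa,\gamma}$ is real throughout. The crucial claims are that, under each of (i)–(v), $f_{\kappa,\gamma}$ has no critical point in the open set $\Omega^+$ and that it maps $\partial\Omega^+$ homeomorphically and monotonically onto $\R=\partial\C^+$, sweeping it exactly once. Granting these, the argument principle applied to $f_{\kappa,\gamma}(z)-w_0$ along $\partial\Omega^+$ shows that every $w_0\in\C^+$ has exactly one preimage, so $f_{\kappa,\gamma}$ is a biholomorphism onto $\C^+$, and reflection finishes the construction of $W_{\kappa,\gamma}$.

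The five parameter conditions enter precisely through these two requirements. By \eqref{eq:derivative f} the finite critical points of $f_{\kappa,\gamma}$ are the roots of $\gamma z^2+(1+1/\kappa)z+1=0$, of discriminant $(1+1/\kappa)^2-4\gamma$, so the bound $\gamma\le\frac14(1+1/\kappa)^2$ in (ii),(iv),(v) is exactly the condition that these critical points be real and thus confined to the real axis, never interior to $\Omega^+$. For necessity in these cases, when $(1+1/\kappa)^2-4\gamma<0$ there is a conjugate pair of nonreal critical points and hence a nonreal critical value $v_0$; since $\overline{\mathcal S}\subset\R$ we have $v_0\in\C\setminus\overline{\mathcal S}$, while the inverse of $f_{\kappa,\gamma}$ has a branch point at $v_0$, so no single-valued holomorphic extension of $w_{\kappa,\gamma}$ to $\C\setminus\overline{\mathcal S}$ can exist (the remaining point to verify is that $v_0$ is reached by the branch through $0$). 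The sharper thresholds $\gamma\le 0$ in (i) and $\gamma\le\frac1\kappa$ in (iii) are the degenerate configurations in which a real critical point or the pole $-\frac1\gamma$ collides with the branch point $z=-\kappa$: indeed $\gamma=\frac1\kappa$ factors $\gamma z^2+(1+1/\kappa)z+1=\frac1\kappa(z+1)(z+\kappa)$, placing a critical point exactly at $-\kappa$. Whether crossing such a threshold is fatal is governed by the strength of the branch point, i.e.\ by the sign of $\kappa-1$ in $(1+z/\kappa)^{\kappa-1}$ and by the position of $-\kappa$ relative to the origin, which is exactly what separates (i) from (ii) and (iii) from (iv); past the threshold a singularity enters the accessible sector and destroys either the monotonicity of $f_{\kappa,\gamma}$ on $\partial\Omega^+$ or the single-valuedness of the extension.

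I expect the main obstacle to be the boundary analysis: proving that $\{F=0,\ y>0\}$ is a single regular arc joining two points of $\Omega_{\kappa,\gamma}\cap\R$ and that $f_{\kappa,\gamma}$ sweeps $\R$ exactly once along $\partial\Omega^+$ without backtracking. This demands a careful study of the implicit curve \eqref{im}, controlling the zeros of $\cot(\kappa\theta(x,y))$ and the limiting behavior of $f_{\kappa,\gamma}$ as $z$ approaches the real axis, the pole $-\frac1\gamma$, the branch point $-\kappa$, and infinity, uniformly across all five regimes including the limit $\kappa=\infty$. Managing the non-integer branch cut and the sign changes of $\kappa$ makes this the most delicate step; by contrast the critical-point count is a direct consequence of \eqref{eq:derivative f}.
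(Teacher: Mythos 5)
Your overall route -- reduce to a biholomorphism $\Omega^+\to\C^+$, verify boundary correspondence, and count preimages with the argument principle -- is the same as the paper's, and your identification of $\gamma\le\frac14(1+\frac1\kappa)^2$ with the reality of the roots of $\gamma z^2+(1+\frac1\kappa)z+1$ is correct. But there are concrete gaps. First, you have no actual argument for $\kappa<0$ (cases (iii) and (iv)). The paper handles these by the substitution $1+\frac{z'}{\kappa'}=\bigl(1+\frac{z}{\kappa}\bigr)^{-1}$ with $\kappa'=-\kappa>0$, which is a homographic map preserving $\C^+$ and satisfies $f_{\kappa,\gamma}(z)=f_{\kappa',\gamma+1/\kappa'}(z')$; the threshold $\gamma\le\frac1\kappa$ is then literally $\gamma'\le 0$, i.e.\ case (i) for the transformed parameters, and (iv) similarly maps onto (ii). Your heuristic that $\gamma=\frac1\kappa$ marks a ``collision of a critical point with $-\kappa$'' is a true observation about the factorization $\frac1\kappa(z+1)(z+\kappa)$, but it does not by itself decide on which side of the threshold the extension exists, and your appeal to ``the strength of the branch point'' is not a proof. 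Second, the exclusion of $0<\kappa<1$ with $\gamma>0$ is asserted but not established: the actual obstruction in the paper is that in this regime $\partial\Omega$ reaches the cut $\{x\le-\kappa\}$, where $\lim_{z\to x}f_{\kappa,\gamma}(z)$ has argument $\kappa\pi\notin\pi\Z$ and hence is not real, so $f_{\kappa,\gamma}$ cannot map $\partial(\Omega\cap\C^+)$ into $\R$. You need this (or an equivalent) explicit mechanism, not just the statement that ``a singularity enters the accessible sector.''

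Two further points. Your necessity argument for $D(0)<0$ via a nonreal critical value $v_0$ being a branch point of the inverse is a legitimate alternative to the paper's two-to-one covering argument, but, as you yourself note, it requires verifying that $v_0$ lies in the image of the branch through $0$ -- equivalently that $\alpha_1\in\overline{\Omega}$ -- which is exactly what the paper's Proposition on $\Omega$ (case $\kappa>1$, $D(0)<0$) establishes; left unverified it is a genuine hole. Finally, the step you defer as ``the main obstacle'' -- proving that $\{F=0,\,y>0\}$ is a single regular arc and that $f_{\kappa,\gamma}$ sweeps $\R$ exactly once along $\partial\Omega^+$ -- is not a technicality but the bulk of the paper: it requires the full classification of the signs of $b(\theta)$, $b'(\theta)$ and $D(\theta)$ via the auxiliary functions $H_\alpha$, $F_\kappa$, $J_\kappa$, the monotonicity of $f_{\kappa,\gamma}$ along the boundary curve (via the lemma that $v=0$ can only self-intersect at critical points), and the determination of $\mathcal{S}$ case by case. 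Without at least a sketch of how $r_\pm(\theta)$ behave in each parameter regime, the plan cannot be checked for correctness in the borderline cases that actually decide the theorem.
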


\begin{Remark}
In the case $\kappa>1$ and $D(0)<0$, the function $f_{\kappa,\gamma}$ maps $\Omega\cap\C^+$ to $\C^+$ two-to-one,
and hence the extension exists on a smaller domain in $\Omega$ 
which $f_{\kappa,\gamma}$ maps to $\C\setminus\overline{\mathcal{S}}$, 
but it is not unique. 
In the case $0<\kappa<1$ and $\gamma>0$, we need to extend the defining domain of $f_{\kappa,\gamma}$
so that there does not exists a complex domain such that $f_{\kappa,\gamma}$ maps to $\C\setminus\overline{\mathcal{S}}$.
\end{Remark}

The proof of this theorem is done in two steps,
that is,
we first give an explicit expression of $\Omega=\Omega_{\kappa,\gamma}$,
and then show that $f_{\kappa,\gamma}$ maps $\Omega$ to $\C\setminus\overline{\mathcal{S}}$ bijectively.
At first, we suppose that $0<\kappa<+\infty$.
Let us change variables in \eqref{im} by
\begin{equation}\label{eq:v change}
re^{i\theta}=1+\frac{z}{\kappa}
\quad(r>0,\ \theta\in(0,\pi)),
\quad\text{or equivalently}\quad
\begin{cases}
x=\kappa(r\cos\theta-1),\\
y=\kappa r\sin \theta,
\end{cases}
\end{equation}
and set  $a:=\gamma\kappa$ and
\begin{equation}\label{eq:b}
b(\theta)=(1-2a)\cos\theta+\sin\theta\cot(\kappa\theta).
\end{equation}
Then,
the equation \eqref{im} can be written as
\begin{equation}\label{eq:r}
a r^2+b(\theta)r+a-1=0.
\end{equation}
In fact, we have
\[
\begin{array}{cl}
&\ds
\kappa(r\cos\theta-1)
+
\gamma\bigl\{(\kappa(r\cos\theta-1))^2+(\kappa r\sin \theta)^2\bigr\}
+\kappa r\sin\theta \cot(\kappa\theta)=0\\
\Longleftrightarrow
&
\gamma\kappa^2 r^2+\bigl\{\kappa\cos\theta -2\gamma \kappa^2\cos\theta+\kappa\sin\theta\cot(\kappa\theta) \bigr\}r+(\gamma\kappa^2-\kappa)=0\\
\Longleftrightarrow&
\ds
\gamma\kappa r^2+
\Bigl\{(1 -2\gamma \kappa)\cos\theta+\sin\theta\cot(\kappa\theta)\Bigr\}r+\gamma\kappa-1
=0.
\end{array}
\]
If $\sin(\kappa\theta)\ne 0$,
then \eqref{eq:r} has a solution
\[
r=r_{\pm}(\theta)=\frac{-b(\theta)\pm\sqrt{b(\theta)^2-4a(a-1)}}{2a}.
\]
Thus, for each angle $\theta$,
there exists at most two points on $f_{\kappa,\gamma}^{-1}(\R)$.
Since the change \eqref{eq:v change} of variables is the polar transformation,
we need to know whether $r_\pm(\theta)$ is positive real or not.
We note that
\begin{equation}\label{eq:r prime}
r_{{\varepsilon}}'(\theta)=\frac{1}{2a}\left(
-b'(\theta)+\varepsilon\frac{2b(\theta)b'(\theta)}{2\sqrt{D(\theta)}}
\right)
=
\frac{-\varepsilon b'(\theta)}{2a}\cdot\frac{-b(\theta)+\varepsilon\sqrt{D(\theta)}}{\sqrt{D(\theta)}}
=
-\varepsilon b'(\theta)\frac{r_\varepsilon(\theta)}{\sqrt{D(\theta)}}
\end{equation}
for $\varepsilon=\pm1$.
Set 
$D(\theta):=b(\theta)^2-4a(a-1)$.
Then, $r_\pm(\theta)$ are real if and only if $D(\theta)\ge 0$,
and we have
\[
D'(\theta)=2b(\theta)b'(\theta).
\]

Let $\alpha_1,\ \alpha_2$ be the two solutions of 
\begin{equation}\label{eq:quad}
q(z):=\gamma z^2+\bigl(1+1/\kappa\bigr)z+1=0.
\end{equation}
If $\alpha_i$ are real, then we assume that $\alpha_1\le \alpha_2$,
and if not, then we assume that $\mathrm{Im}\,\alpha_1>0$ and $\mathrm{Im}\,\alpha_2<0$.
Then, $f'(z)=0$ implies $z=\alpha_i$ $(i=1,2)$ or $z=-\kappa$ if $\kappa>1$.
It is clear that $\alpha_1,\alpha_2$ are real numbers if and only if
\[
\Bigl(1+\frac{1}{\kappa}\Bigr)^2-4\gamma\ge 0
\iff
\gamma\le \frac14\Bigl(1+\frac{1}{\kappa}\Bigr)^2.
\]
These two points $\alpha_i$, $i=1,2$ correspond to the solutions of \eqref{eq:r} with parameter $\theta=0$.
In fact,
if $\theta=0$, then $r=1+\frac{x}{\kappa}$ and
\begin{equation}\label{def:b zero}
b(0):=\lim_{\theta\to0}b(\theta)=\frac{\kappa+1}{\kappa}-2a,
\end{equation}
and hence the equation \eqref{eq:r} with $\theta=0$ is described as 
\[
\begin{array}{r@{\ }c@{\ }l}
0
&=&
a r^2+b(0)r+a-1=a(r-1)^2+(1+\tfrac{1}{\kappa})(r-1)+\tfrac{1}{\kappa}\\
&=&
\ds
a\cdot\frac{x^2}{\kappa^2}+(1+\tfrac{1}{\kappa})\frac{x}{\kappa}+\frac{1}{\kappa}
=
\frac1\kappa\Bigl(\gamma x^2+\Bigl(1+\frac1\kappa\Bigr)x+1\Bigr).
\end{array}
\]
We note that 
\begin{equation}
\label{eq:q special values}
q\Bigl(-\frac1\gamma\Bigr)=\frac{a-1}{a},\quad
q(-\kappa)=(a-1)\kappa,
\end{equation}
and
\[
D(0)=b(0)^2-4a(a-1)=\Bigl(1+\frac1\kappa\Bigr)^2-\frac{4a}{\kappa}.
\]

In order to know whether the equation \eqref{eq:r} has a positive solution,
we need to study the signature of $D(\theta)$, and hence that of $b(\theta)$ and $b'(\theta)$.
To do so,
we introduce three functions $H_\alpha$, $F_\kappa$ and $J_\kappa$ as below, 
and investigate them in detail.

For $\alpha>0$, we set
\[
H_\alpha(x):=\sin(\alpha x)-\alpha \sin(x)\quad(x\in\R).
\]
Let us investigate the signature of $H_\alpha(x)$ on the interval $I_1=(0,\min(\frac{2\pi}{\alpha},2\pi))$.
Note that if $\alpha=1$ then we have $H_1\equiv 0$,
and hence we exclude the case $\alpha=1$.
By differentiating, we have
\[
H'_\alpha(x)=\alpha\cos(\alpha x)-\alpha\cos x
=
-2\alpha\sin\Bigl(\frac{\alpha+1}{2}x\Bigr)\sin\Bigl(\frac{\alpha-1}{2}x\Bigr).
\]
$H'_\alpha(x)=0$ implies that $x=\frac{2n\pi}{\alpha+1}$ $(n\in\Z)$ or $x=\frac{2m\pi}{\alpha-1}$ $(m\in\Z)$.

\begin{Lemma}
\label{lemma:func_H}
\begin{enumerate}[\rm(1)]
    \item If $0<\alpha\le \frac12$, then one has $H_\alpha(x)>0$ for any $x\in I_1$.
    \item If $\frac12<\alpha<1$, then there exists a unique $y_*$ such that $H_\alpha(y_*)=0$,
    and one has $H_\alpha(x)>0$ for $x\in(0,y_*)$ and $H_\alpha(x)<0$ for $x\in(y_*,2\pi)$.
    \item If $1<\alpha<2$, then there exists a unique $y_*\in I_1$ such that $H_\alpha(y_*)=0$,
    and one has $H_\alpha(x)<0$ for $x\in(0,y_*)$ and $H_\alpha(x)>0$ for $x\in(y_*,\frac{2\pi}{\alpha})$.
    \item If $\alpha\ge 2$, then one has $H_\alpha(x)<0$ for any $x\in I_1$.
\end{enumerate}
\end{Lemma}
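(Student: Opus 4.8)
The plan is to read off the monotonicity of $H_\alpha$ from the factored derivative $H'_\alpha(x)=-2\alpha\sin\bigl(\tfrac{\alpha+1}{2}x\bigr)\sin\bigl(\tfrac{\alpha-1}{2}x\bigr)$ and then pin down the sign by combining this with the value of $H_\alpha$ at the single right endpoint of $I_1$. First I would record that $H_\alpha(0)=0$ and that, among the critical points $\tfrac{2n\pi}{\alpha+1}$ and $\tfrac{2m\pi}{\alpha-1}$, only $x_*:=\tfrac{2\pi}{\alpha+1}$ lies in the interior of $I_1$. Indeed, for $0<\alpha<1$ one has $I_1=(0,2\pi)$ and the nearest positive candidates $\tfrac{2\pi}{1-\alpha}$ and $\tfrac{4\pi}{\alpha+1}$ both exceed $2\pi$, while for $\alpha>1$ one has $I_1=(0,\tfrac{2\pi}{\alpha})$ and the nearest candidates $\tfrac{2\pi}{\alpha-1}$ and $\tfrac{4\pi}{\alpha+1}$ both exceed $\tfrac{2\pi}{\alpha}$. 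Hence, writing $R$ for the right endpoint of $I_1$, the point $x_*$ splits $I_1$ into $(0,x_*)$ and $(x_*,R)$, on each of which $H_\alpha$ is strictly monotone; that is, $H_\alpha$ is unimodal on $I_1$.

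Next I would fix the two signs. For $x\in I_1$ the argument $\tfrac{\alpha-1}{2}x$ stays in $(0,\pi)$ in absolute value, so $\sin\bigl(\tfrac{\alpha-1}{2}x\bigr)$ keeps the sign of $\alpha-1$ throughout, whereas $\sin\bigl(\tfrac{\alpha+1}{2}x\bigr)$ is positive on $(0,x_*)$ and negative on $(x_*,R)$ since $\tfrac{\alpha+1}{2}x_*=\pi$ and the argument never reaches $2\pi$ on $I_1$. Feeding this into the factored form shows that for $0<\alpha<1$ the function $H_\alpha$ increases on $(0,x_*)$ and decreases on $(x_*,R)$ (a single interior maximum), and that for $\alpha>1$ it decreases on $(0,x_*)$ and increases on $(x_*,R)$ (a single interior minimum).

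The sign of $H_\alpha$ is then forced by the endpoint value. For $0<\alpha<1$ we have $H_\alpha(2\pi)=\sin(2\pi\alpha)$, which is $\ge 0$ exactly when $\alpha\le\tfrac12$; since $H_\alpha$ rises from $0$ to a positive maximum and then falls to this value, case (1) ($\alpha\le\tfrac12$, endpoint $\ge 0$) yields $H_\alpha>0$ on all of $(0,2\pi)$, while case (2) ($\tfrac12<\alpha<1$, endpoint $<0$) produces exactly one zero $y_*$ on the decreasing branch, with $H_\alpha>0$ before $y_*$ and $H_\alpha<0$ after. For $\alpha>1$ we have $H_\alpha(\tfrac{2\pi}{\alpha})=-\alpha\sin(\tfrac{2\pi}{\alpha})$, which is positive precisely when $\tfrac{2\pi}{\alpha}\in(\pi,2\pi)$, i.e.\ $1<\alpha<2$, and $\le 0$ when $\alpha\ge 2$; since here $H_\alpha$ falls from $0$ to a negative minimum and then rises to the endpoint value, case (3) produces exactly one zero $y_*$ on the increasing branch (with $H_\alpha<0$ before and $H_\alpha>0$ after), whereas case (4) keeps $H_\alpha<0$ throughout. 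In (2) and (3) the uniqueness of $y_*$ is immediate: the branch carrying the zero is strictly monotone, and the other branch has constant sign.

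The one step that needs genuine care is the joint sign bookkeeping of the two sine factors, together with the verification that the second family $\tfrac{2m\pi}{\alpha-1}$ of critical points never enters $I_1$; this is exactly what guarantees unimodality and hence that at most one zero can occur in each case. Everything else reduces to evaluating $H_\alpha$ at $0$ and at the single right endpoint of $I_1$ and applying the intermediate value theorem.
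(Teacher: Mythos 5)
Your proof is correct and follows essentially the same route as the paper: isolate $\tfrac{2\pi}{\alpha+1}$ as the only critical point in $I_1$, read the sign of $H'_\alpha$ from the factored form to get unimodality, and then decide the sign of $H_\alpha$ from its value at the right endpoint ($\sin(2\pi\alpha)$ resp.\ $-\alpha\sin(\tfrac{2\pi}{\alpha})$). Your explicit bookkeeping of the two sine factors is slightly more detailed than the paper's, but the argument is the same.
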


\begin{proof}
It is obvious that $H_\alpha(0)=0$.
First, we assume that $0<\alpha <1$.
In this case,
since $\frac{1}{1-\alpha}\le 1$ and $\frac12< \frac{1}{\alpha+1}<1$,
we have 
\[
0<\frac{2\pi}{\alpha+1}<2\pi<\min\Bigl(\frac{2\pi}{\alpha+1},\frac{-2\pi}{\alpha-1}\Bigr).
\]
This means that $H'_\alpha(x)>0$ when $x\in(0,\frac{2\pi}{\alpha+1})$, and $H'_\alpha(x)<0$ when $x\in(\frac{2\pi}{\alpha+1},2\pi)$.
On the other hand,
we have
$H_\alpha(2\pi)=\sin(2\pi\alpha)-\alpha\sin(2\pi)=\sin(2\pi\alpha)$.
If $0<\alpha\le \frac12$,
then we have $0<2\alpha\pi\le\pi$ and thus $H_\alpha(2\pi)\ge 0$,
and if $\frac12<\alpha<1$ then we have $\pi<2\pi\alpha<2\pi$ which implies $H_\alpha(2\pi)<0$.
Thus, we obtain the assertions (1) and (2).

Next we assume that $\alpha>1$.
In this case, since $\frac{2}{\alpha+1}>\frac1\alpha$, we have
\[
0<\frac{2\pi}{\alpha+1}<\frac{2\pi}{\alpha}<\min\Bigl(\frac{4\pi}{\alpha+1},\frac{2\pi}{\alpha-1}\Bigr).
\]
This means that $H'_\alpha(x)<0$ when $x\in(0,\frac{2\pi}{\alpha+1})$, 
and $H'_\alpha(x)>0$ when $x\in(\frac{2\pi}{\alpha+1},\frac{2\pi}{\alpha})$.
On the other hand,
we have
$H_\alpha(\frac{2\pi}{\alpha})=\sin(2\pi)-\alpha\sin(\frac{2\pi}{\alpha})=-\alpha\sin(\frac{2\pi}{\alpha})$.
If $1<\alpha<2$, then we have $\pi<\frac{2\pi}{\alpha}<2\pi$ and hence $H_\alpha(\frac{2\pi}{\alpha})>0$.
If $\alpha\ge 2$, then we have $0<\frac{2\pi}{\alpha}\le \pi$ so that $H_\alpha(\frac{2\pi}{\alpha})\le 0$.
Therefore, we have proved the assertion (3) and (4).
\end{proof}

For $\kappa>0$, we set
\[
F_\kappa(x):=\tan x\cdot \cot(\kappa x)\quad(x\in\R).
\]
Let us investigate the behavior of $F_\kappa(x)$ on the interval $I_0=(0,\min(\frac{\pi}{\kappa},\pi))$.
Since $F_1\equiv 1$, we exclude the case $\kappa= 1$.
Notice that if $\kappa<2$, then $F_\kappa(x)$ has a pole at $x=\frac{\pi}{2}$ in the interval $I_0$.
At first, we see that
\[
F_\kappa(0):=
\lim_{x\to+0}F_\kappa(x)
=
\lim_{x\to+0}\frac{\sin x}{\sin(\kappa x)}\cdot\frac{\cos(\kappa x)}{\cos x}=\frac{1}{\kappa}>0.
\]
If $\kappa<1$, then it is obvious that $F_\kappa(\pi)=0$.
On the other hand,
if $1<\kappa<2$, then $\frac\pi2<\frac\pi\kappa<\pi$, 
and if $\kappa\ge 2$ then $0<\frac\pi\kappa\le \frac\pi2$,
and hence we have
\[
\lim_{x\to\frac\pi\kappa-0}F_\kappa(x)=
\begin{cases}
+\infty&(\text{if }1<\kappa<2),\\
-\infty&(\text{if }\kappa\ge 2).
\end{cases}
\]
By differentiating, we have
\[
F'_\kappa(x)=\frac{\cot(\kappa x)}{\cos^2x}-\frac{\kappa \tan x}{\sin^2(\kappa x)}
=
\frac{H_\kappa(2x)}{2(\cos x\sin(\kappa x))^2}.
\]
For the case $\frac12<\kappa<2$ ($\kappa\ne 1$),
Lemma \ref{lemma:func_H} tells us that $H_\kappa$ has a unique zero point $y_*$ in the interval $(0,\min(2\pi,\frac{2\pi}{\kappa}))$.
If we set $x_*=\frac{y_*}{2}$,
then we have $x_*\in I_0$ and $F_\kappa(x_*)=0$.

\begin{Lemma}
\label{lemma:func_F}
If $\frac12<\kappa<1$, then one has $F_\kappa(x_*)<1$,
and if $1<\kappa<2$, then one has $F_\kappa(x_*)>1$.
\end{Lemma}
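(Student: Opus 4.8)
The plan is to compute $F_\kappa(x_*)$ in closed form from the critical-point equation and then to locate both $x_*$ and $\kappa x_*$ inside the interval $(\frac\pi2,\pi)$, where the monotonicity of the sine resolves both inequalities simultaneously.

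First I would turn the defining property of $x_*$ into a usable identity. By construction $x_*=y_*/2$ with $H_\kappa(y_*)=0$, i.e.\ $\sin(2\kappa x_*)=\kappa\sin(2x_*)$, which after the double-angle formula reads $\sin(\kappa x_*)\cos(\kappa x_*)=\kappa\sin x_*\cos x_*$. Rearranging gives $\frac{\cos(\kappa x_*)}{\cos x_*}=\kappa\,\frac{\sin x_*}{\sin(\kappa x_*)}$, and substituting this into $F_\kappa(x_*)=\frac{\sin x_*\cos(\kappa x_*)}{\cos x_*\sin(\kappa x_*)}$ produces the closed form
\[
F_\kappa(x_*)=\kappa\left(\frac{\sin x_*}{\sin(\kappa x_*)}\right)^{2}.
\]
From here the entire lemma reduces to comparing $\sin(\kappa x_*)$ with $\sin x_*$.

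The main obstacle, and the step I would spend the most care on, is the localization: I claim $x_*\in(\frac\pi2,\pi)$ and $\kappa x_*\in(\frac\pi2,\pi)$ in both regimes. For $x_*>\frac\pi2$ I would evaluate $H_\kappa(\pi)=\sin(\kappa\pi)$, which is positive for $\frac12<\kappa<1$ and negative for $1<\kappa<2$; in each case this matches the sign that Lemma \ref{lemma:func_H} assigns to the interval $(0,y_*)$, forcing $y_*>\pi$ and hence $x_*=y_*/2>\frac\pi2$. The upper bound $x_*<\pi$ (resp.\ $x_*<\frac\pi\kappa$) is immediate from $y_*\in I_1$. To place $\kappa x_*$ the case $1<\kappa<2$ is automatic, since $\kappa x_*>\frac\pi2$ follows from $\kappa>1$ and $x_*>\frac\pi2$, while $\kappa x_*<\kappa\cdot\frac\pi\kappa=\pi$. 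The delicate point is the lower bound when $\frac12<\kappa<1$; here I would test $H_\kappa$ at $\frac\pi\kappa\in(\pi,2\pi)\subset I_1$, where $H_\kappa(\frac\pi\kappa)=-\kappa\sin(\frac\pi\kappa)>0$, so Lemma \ref{lemma:func_H} again gives $y_*>\frac\pi\kappa$, whence $\kappa x_*=\kappa y_*/2>\frac\pi2$; the bound $\kappa x_*<\kappa\pi<\pi$ is clear.

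With both $x_*$ and $\kappa x_*$ confined to $(\frac\pi2,\pi)$, where the sine is positive and strictly decreasing, the conclusion follows at once. For $\frac12<\kappa<1$ we have $\kappa x_*<x_*$, hence $\sin(\kappa x_*)>\sin x_*>0$, so the squared ratio is less than $1$ and $F_\kappa(x_*)<\kappa<1$. For $1<\kappa<2$ we have $\kappa x_*>x_*$, hence $\sin(\kappa x_*)<\sin x_*$, the squared ratio exceeds $1$, and $F_\kappa(x_*)>\kappa>1$. This settles both assertions, in fact with the sharper bounds $F_\kappa(x_*)<\kappa$ and $F_\kappa(x_*)>\kappa$ respectively.
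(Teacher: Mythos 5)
Your proof is correct, but it takes a genuinely different route from the paper. The paper's proof works directly with the angle-subtraction formula: it observes that $(1-\kappa)x_*$ (resp.\ $(\kappa-1)x_*$) lies in $(0,\pi)$, so $\sin((1-\kappa)x_*)>0$, expands this as $\sin x_*\cos(\kappa x_*)-\cos x_*\sin(\kappa x_*)>0$, and divides by $\cos x_*\sin(\kappa x_*)<0$ to flip the inequality into $F_\kappa(x_*)\lessgtr 1$; this needs only the localization $x_*\in(\tfrac\pi2,\pi)$ and $\sin(\kappa x_*)>0$. You instead extract the closed form $F_\kappa(x_*)=\kappa\bigl(\sin x_*/\sin(\kappa x_*)\bigr)^2$ from the critical-point identity $\sin(\kappa x_*)\cos(\kappa x_*)=\kappa\sin x_*\cos x_*$ (the same identity the authors use in their supplemental material, though for a different estimate), which forces you to additionally localize $\kappa x_*$ in $(\tfrac\pi2,\pi)$; your evaluation of $H_\kappa$ at $\pi/\kappa$ to get $y_*>\pi/\kappa$ in the regime $\tfrac12<\kappa<1$ is the one step the paper never needs, and you carry it out correctly. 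The payoff of your approach is the sharper conclusion $F_\kappa(x_*)<\kappa$ (resp.\ $F_\kappa(x_*)>\kappa$), which is stronger than the stated lemma; the cost is the extra localization work, and you should make explicit at the division step that $\cos x_*\ne 0$ and $\sin(\kappa x_*)\ne 0$, both of which do follow from your localization. Your derivation of $x_*>\tfrac\pi2$ via the sign of $H_\kappa(\pi)=\sin(\kappa\pi)$ is also a useful supplement, since the paper asserts this localization without proof inside the proof of the lemma.
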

\begin{proof}
We first assume that $\frac12<\kappa<1$.
In this case, since $0<1-\kappa<\frac12$ and since $\frac\pi2<x_*<\pi$, 
we have $\sin((1-\kappa)x_*)>0$.
Thus, since $\cos x_*<0$ and $\sin(\kappa x_*)>0$, we obtain
\[
\begin{array}{r@{\ }c@{\ }l}
\ds \sin((1-\kappa)x_*)>0
&\iff&
\ds \sin x_*\cos(\kappa x_*)-\cos x_*\sin(\kappa x_*)>0\\
&\iff&
\ds \sin x_*\cos(\kappa x_*)>\cos x_*\sin(\kappa x_*)\\
&\iff&
\ds F_\kappa(x_*)=\frac{\sin x_*\cos(\kappa x_*)}{\cos x_*\sin(\kappa x_*)}<1.
\end{array}
\]
Next, we assume that $1<\kappa<2$.
In this case, since $0<\kappa-1<1$ and since $\frac\pi2<x_*<\frac\pi\kappa<\pi$, 
we have $\sin((\kappa-1)x_*)>0$.
Thus, since $\cos x_*<0$ and $\sin(\kappa x_*)>0$, we obtain
\[
\begin{array}{r@{\ }c@{\ }l}
\ds \sin((\kappa-1)x_*)>0
&\iff&
\ds \cos x_*\sin(\kappa x_*)-\sin x_*\cos(\kappa x_*)>0\\
&\iff&
\ds \cos x_*\sin(\kappa x_*)>\sin x_*\cos(\kappa x_*)\\
&\iff&
\ds F_\kappa(x_*)=\frac{\sin x_*\cos(\kappa x_*)}{\cos x_*\sin(\kappa x_*)}>1.
\end{array}
\]
We have proved the lemma.
\end{proof}

Lemmas~\ref{lemma:func_H} and~\ref{lemma:func_F} yield the following table.

\begin{Lemma}
\label{lemma:table_F}
One has the following increasing/decreasing table of $F_\kappa$.
\[
\begin{array}{|l||c|l|}
\hline
\ds {\rm(A)}\ 0<\kappa\le\frac12&
\begin{array}{*{5}{c|}c}
x&0&\cdots&\frac{\pi}{2}&\cdots&\pi\\ \hline
F'_\kappa&&+&\times&\multicolumn{2}{c}{+}\\ \hline
F_\kappa&\frac1\kappa&\nearrow^{+\infty}&\times&{}_{-\infty}\nearrow&0
\end{array}
& \\ \hline\hline
\ds {\rm(B)}\ \frac12<\kappa<1&
\begin{array}{*{7}{c|}c}
x&0&\cdots&\frac{\pi}{2}&\cdots&x_*&\cdots&\pi\\ \hline
F'_\kappa&&+&\times&+&0&\multicolumn{2}{c}{-}\\ \hline
F_\kappa&\frac1\kappa&\nearrow^{+\infty}&\times&{}_{-\infty}\nearrow&F_\kappa(x_*)&\searrow&0
\end{array}
&F_\kappa(x_*)<1 \\ \hline\hline
\ds {\rm(C)}\ 1<\kappa<2&
\begin{array}{*{7}{c|}c}
x&0&\cdots&\frac{\pi}{2}&\cdots&x_*&\cdots&\frac{\pi}{\kappa}\\ \hline
F'_\kappa&&-&\times&-&0&+&\\ \hline
F_\kappa&\frac{1}{\kappa}&\searrow_{-\infty}&\times&{}^{+\infty}\searrow&F_\kappa(x_*)&\nearrow^{+\infty}&\times
\end{array}
&
F_\kappa(x_*)>1\\ \hline\hline
\ds {\rm(D)}\ \kappa\ge 2&
\begin{array}{*{3}{c|}c}
x&0&\cdots&\frac{\pi}{\kappa}\\ \hline
F'_\kappa&&-&\times\\ \hline
F_\kappa&\frac{1}{\kappa}&\searrow_{-\infty}&\times
\end{array}& \\ \hline
\end{array}
\]
\end{Lemma}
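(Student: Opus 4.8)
The plan is to read the whole table off a single sign analysis of $H_\kappa(2x)$. Indeed, the identity derived just before the statement,
\[
F'_\kappa(x)=\frac{H_\kappa(2x)}{2(\cos x\sin(\kappa x))^2},
\]
shows that on $I_0$, away from the poles of $\tan x$ and $\cot(\kappa x)$, the sign of $F'_\kappa(x)$ is exactly the sign of $H_\kappa(2x)$. The key elementary observation is that the dilation $x\mapsto 2x$ maps $I_0=(0,\min(\frac\pi\kappa,\pi))$ bijectively onto $I_1=(0,\min(\frac{2\pi}{\kappa},2\pi))$, so Lemma \ref{lemma:func_H} applies verbatim with $\alpha=\kappa$ and argument $2x$. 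I would then split into the four ranges of $\kappa$ and combine the sign of $H_\kappa(2x)$ with the boundary and pole limits already computed above the statement ($F_\kappa(0)=\frac1\kappa$, the one-sided infinities at the $\tan$-pole $x=\frac\pi2$ and at the $\cot$-pole $x=\frac\pi\kappa$, and $F_\kappa(\pi)=0$ when $\kappa<1$); note that for cases (A)--(C) the only interior singularity is the pole of $\tan x$ at $\frac\pi2$, while case (D) has none.

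For the monotone cases this is immediate: when $0<\kappa\le\frac12$ (resp.\ $\kappa\ge2$) Lemma \ref{lemma:func_H}(1) (resp.\ (4)) gives $H_\kappa(2x)>0$ (resp.\ $<0$) throughout $I_1$, so $F'_\kappa$ keeps a constant sign and the tables (A), (D) follow after inserting the endpoint and pole limits. For $\frac12<\kappa<1$ and $1<\kappa<2$, Lemma \ref{lemma:func_H}(2),(3) produce a single sign change of $H_\kappa$ at $y_*\in I_1$, hence a single critical point $x_*=\frac{y_*}{2}$ of $F_\kappa$, with $F'_\kappa$ passing from $+$ to $-$ in case (B) and from $-$ to $+$ in case (C).

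The one genuine point to settle is the position of $x_*$ relative to the interior pole $x=\frac\pi2$: the tables (B), (C) assert $\frac\pi2<x_*$, which is what decides whether the extremum sits on the branch after the blow-up rather than before it. I would pin this down by evaluating $H_\kappa$ at the doubled argument $2x=\pi$, where $H_\kappa(\pi)=\sin(\kappa\pi)$. For $\frac12<\kappa<1$ one has $\kappa\pi\in(\frac\pi2,\pi)$, so $H_\kappa(\pi)>0$, and since in case (B) $H_\kappa>0$ only on $(0,y_*)$ this forces $y_*>\pi$, i.e.\ $x_*>\frac\pi2$; for $1<\kappa<2$ one has $\kappa\pi\in(\pi,2\pi)$, so $H_\kappa(\pi)<0$, and since in case (C) $H_\kappa<0$ only on $(0,y_*)$ this again forces $y_*>\pi$, i.e.\ $x_*>\frac\pi2$. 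Finally the last column is filled by Lemma \ref{lemma:func_F}, which gives $F_\kappa(x_*)<1$ in case (B) and $F_\kappa(x_*)>1$ in case (C).

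I expect this placement of $x_*$ relative to the pole to be the main obstacle, since it is the only qualitative feature of the tables not handed to us directly by the sign pattern of $H_\kappa$; everything else is the bookkeeping of already-established signs and limits. The rest of the argument is a mechanical assembly, case by case, of the sign of $F'_\kappa$ together with the boundary behaviour.
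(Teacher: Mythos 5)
Your proposal is correct and follows essentially the same route as the paper, which derives the table directly from the formula $F'_\kappa(x)=H_\kappa(2x)/\bigl(2(\cos x\sin(\kappa x))^2\bigr)$, Lemma~\ref{lemma:func_H} applied with $\alpha=\kappa$ at the doubled argument, the boundary/pole limits computed just before the statement, and Lemma~\ref{lemma:func_F} for the last column. Your explicit verification that $y_*>\pi$, hence $x_*>\frac\pi2$, via the sign of $H_\kappa(\pi)=\sin(\kappa\pi)$ is a worthwhile detail: the paper merely asserts $\frac\pi2<x_*$ inside the proof of Lemma~\ref{lemma:func_F}, leaving the reader to extract it from the monotonicity structure in the proof of Lemma~\ref{lemma:func_H}.
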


For $\kappa>0$, we set
\[
J_\kappa(x):=\frac{2x-2\kappa-1}{4\kappa x-2\kappa-1}=\frac{1}{2\kappa}\cdot
\frac{x-\frac{2\kappa+1}{2}}{x-\frac{2\kappa+1}{4\kappa}}
=
\frac{1}{2\kappa}-\frac{4\kappa^2-1}{8\kappa^2}\cdot\frac{1}{x-\frac{2\kappa+1}{4\kappa}}.
\]
Then, we have
\begin{equation}\label{eq:J-1}
J_\kappa(0)=1,\quad
J_{\kappa}(1)=-1,\quad
\lim_{x\to+\infty}J_\kappa(x)=\lim_{x\to-\infty}J_\kappa(x)=\frac{1}{2\kappa}.
\end{equation}
Note that, if we set $\kappa=\frac12$, then $J_{\frac12}\equiv 1$.
The following lemma is trivial.
\begin{Lemma}
\label{lemma:func_J}
Suppose that $\kappa\ne \frac12$.
Then, $J_\kappa$ has a pole at $x=\frac12+\frac{1}{4\kappa}$.
If $0<\kappa<\frac12$, then it is monotonic decreasing on $\R$,
and if $\kappa>\frac12$, then it is monotonic increasing on $\R$.
\end{Lemma}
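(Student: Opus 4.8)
The plan is to work directly from the partial-fraction form of $J_\kappa$ already recorded in its definition, since that expression isolates both the singularity and the monotonic behaviour at once. First I would read off the pole: for $\kappa\ne\frac12$ the denominator $4\kappa x-2\kappa-1$ vanishes exactly at $x=\frac{2\kappa+1}{4\kappa}=\frac12+\frac{1}{4\kappa}$. There the numerator $2x-2\kappa-1$ does not vanish, since its only zero $x=\kappa+\frac12$ coincides with the denominator's zero only when $\kappa+\frac12=\frac12+\frac{1}{4\kappa}$, i.e.\ $4\kappa^2=1$, i.e.\ $\kappa=\frac12$, which is excluded. Hence the singularity is a genuine simple pole rather than a removable one; this is consistent with the coefficient $\frac{4\kappa^2-1}{8\kappa^2}$ in the partial-fraction form being nonzero precisely when $\kappa\ne\frac12$.

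For the monotonicity I would simply differentiate the partial-fraction expression, which gives
\[
J_\kappa'(x)=\frac{4\kappa^2-1}{8\kappa^2}\cdot\frac{1}{\bigl(x-\frac{2\kappa+1}{4\kappa}\bigr)^2}.
\]
Wherever $J_\kappa$ is defined the squared factor is strictly positive, so the sign of $J_\kappa'$ equals that of $4\kappa^2-1=(2\kappa-1)(2\kappa+1)$. For $0<\kappa<\frac12$ this product is negative, so $J_\kappa'<0$ and $J_\kappa$ is decreasing; for $\kappa>\frac12$ it is positive, so $J_\kappa'>0$ and $J_\kappa$ is increasing. The one point worth flagging is that \emph{monotonic on $\R$} must be understood componentwise on the two intervals $(-\infty,\frac12+\frac{1}{4\kappa})$ and $(\frac12+\frac{1}{4\kappa},+\infty)$ separated by the pole, since $J_\kappa$ jumps across the pole rather than remaining monotone through it.

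There is essentially no obstacle here; the author's remark that the lemma is trivial is accurate, and the entire argument reduces to a one-line derivative computation once the partial-fraction form is used. The only care needed is the bookkeeping that confirms the numerator and denominator share no root when $\kappa\ne\frac12$, which is exactly what guarantees the pole is a real point of the domain and that the residue coefficient $\frac{4\kappa^2-1}{8\kappa^2}$ controlling the sign of $J_\kappa'$ does not degenerate.
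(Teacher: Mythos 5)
Your proof is correct and is exactly the one-line derivative computation the authors intend: the paper offers no proof at all, remarking only that the lemma is trivial, and the partial-fraction form of $J_\kappa$ is written out in the definition precisely so that the sign of $J_\kappa'$ can be read off from the coefficient $\frac{4\kappa^2-1}{8\kappa^2}$ as you do. Your added caveat that ``monotonic on $\R$'' must be read componentwise on the two intervals separated by the pole is accurate and, if anything, a slight sharpening of the paper's statement.
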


We now consider the function $b(\theta)$.
If $\kappa=1$, then we have $b(\theta)=2(1-a)\cos\theta$.
Otherwise,
since $b(\theta)$ can be described as
\[
b(\theta)=\bigl((1-2a)+F_\kappa(\theta)\bigr)\cos\theta\quad(\text{if }\cos\theta\ne 0),
\]
the signature of $b(\theta)$ can be determined by using $F_\kappa$.
Note that, $\cos\theta=0$ occurs when $\kappa<2$, and in this case,
we have 
\[
b\Bigl(\frac\pi2\Bigr)=0+1\cdot\cot\frac{\kappa\pi}{2}=\cot\frac{\kappa\pi}{2}.
\]
It is positive if $\kappa<1$, and negative if $1<\kappa<2$.
These observations together with Lemma~\ref{lemma:func_F} yield the following table.

\begin{Lemma}
\label{lemma:signature b}
The signature of $b(\theta)$ on the interval $I_0=(0,\min(\pi,\frac\pi\kappa))$ is given as follows.
\[
\begin{array}{|l||c|l|} \hline
\multicolumn{3}{|c|}{0<\kappa\le\frac12}\\ \hline
    \ds 2a-1>\frac1\kappa
    &
    \begin{array}{*{5}{c|}c}
    \theta&0&\cdots&\varphi&\cdots&\pi\\ \hline 
    b(\theta)&&-&0&+&
    \end{array}
    &
    \ds\varphi<\frac{\pi}{2}\\ \hline
    \ds 0\le 2a-1\le \frac1\kappa&
    \begin{array}{*{3}{c|}c}
    \theta&0&\cdots&\pi \\ \hline
    b(\theta)&&+&
    \end{array}
    &\\ \hline
    \ds 2a-1<0&
    \begin{array}{*{5}{c|}c}
    \theta&0&\cdots&\varphi&\cdots&\pi\\ \hline
    b(\theta)&&+&0&-&
    \end{array}
    &
    \ds\varphi>\frac\pi2\\ \hline\hline
\multicolumn{3}{|c|}{\frac12<\kappa<1}\\ \hline
    \ds 2a-1>\frac1\kappa&
    \begin{array}{*{5}{c|}c}
    \theta&0&\cdots&\varphi&\cdots&\pi \\ \hline
    b(\theta)&&-&0&+&
    \end{array}&
    \ds\varphi<\frac\pi2\\ \hline
    \ds F_\kappa(\theta_*)<2a-1\le \frac1\kappa&
    \begin{array}{*{3}{c|}c}
    \theta&0&\cdots&\pi\\ \hline
    b(\theta)&&+&
    \end{array}&
    \\ \hline
    \ds 0\le 2a-1\le F_\kappa(\theta_*)&
    \begin{array}{*{7}{c|}c}
    \theta&0&\cdots&\varphi_1&\cdots&\varphi_2&\cdots&\pi\\ \hline
    b(\theta)&&+&0&-&0&+&
    \end{array}&
    \ds\varphi_i>\frac\pi2\\ \hline
    \ds 2a-1<0&
    \begin{array}{*{5}{c|}c}
    \theta&0&\cdots&\varphi&\cdots&\pi\\ \hline
    b(\theta)&&+&0&-&
    \end{array}&
    \ds\varphi>\frac\pi2\\ \hline \hline
%\multicolumn{3}{|c|}{\kappa=1}\\ \hline
\multicolumn{3}{|c|}{1<\kappa<2}\\ \hline
    \ds 2a-1\ge F_\kappa(\theta_*)&
    \begin{array}{*{7}{c|}c}
    \theta&0&\cdots&\varphi_1&\cdots&\varphi_2&\cdots&\frac\pi\kappa\\ \hline
    b(\theta)&&-&0&+&0&-&
    \end{array}&
    \ds\varphi_i>\frac\pi2\\ \hline
    \ds \frac1\kappa\le 2a-1<F_\kappa(\theta_*)&
    \begin{array}{*{3}{c|}c}
    \theta&0&\cdots&\frac\pi\kappa\\ \hline
    b(\theta)&&-&
    \end{array}
    &    \\ \hline
    \ds 2a-1<\frac1\kappa&
    \begin{array}{*{5}{c|}c}
    \theta&0&\cdots&\varphi&\cdots&\frac\pi\kappa\\ \hline
    b(\theta)&&+&0&-&
    \end{array}&
    \ds\varphi<\frac\pi2\\ \hline\hline
\multicolumn{3}{|c|}{\kappa\ge 2}\\ \hline
    \ds 2a-1\ge \frac1\kappa&
    \begin{array}{*{3}{c|}c}
    \theta&0&\cdots&\frac\pi\kappa\\ \hline
    b(\theta)&&-&
    \end{array}&
    \\ \hline
    \ds 2a-1<\frac1\kappa&
    \begin{array}{*{5}{c|}c}
    \theta&0&\cdots&\varphi&\cdots&\frac\pi\kappa\\ \hline
    b(\theta)&&+&0&-&
    \end{array}&
    \phantom{a}\\ \hline
\end{array}
\]
In this table,
$\varphi$ or $\varphi_i$ $(i=1,2)$ are solutions in $I_0$ of $b(\theta)=0$.
If $2a-1=F_\kappa(x_*)$, then $\varphi_1=\varphi_2$.
\end{Lemma}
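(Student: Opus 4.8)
The plan is to exploit the factorization recorded just before the lemma, namely $b(\theta)=\bigl((1-2a)+F_\kappa(\theta)\bigr)\cos\theta$ valid whenever $\cos\theta\ne 0$. Since the sign of $\cos\theta$ is known outright on $I_0$ --- positive on $(0,\pi/2)$ and negative on $(\pi/2,\min(\pi,\pi/\kappa))$ --- the problem reduces to tracking the sign of $(1-2a)+F_\kappa(\theta)$, equivalently to comparing $F_\kappa(\theta)$ against the constant threshold $t:=2a-1$, since $(1-2a)+F_\kappa=F_\kappa-t$. The interior zeros of $b$ (apart from a possible contribution at $\pi/2$) are therefore exactly the solutions of $F_\kappa(\theta)=t$, and their location relative to $\pi/2$ is dictated by which monotone branch of $F_\kappa$ they sit on.

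First I would split into the four regimes (A)--(D) of Lemma~\ref{lemma:table_F} and, within each, read off from the increasing/decreasing table how many times the horizontal line at height $t$ meets the graph of $F_\kappa$ and on which branches. The thresholds at which this count jumps are precisely the values appearing in the case distinctions of the statement: the boundary value $F_\kappa(0^+)=1/\kappa$, the local extremum $F_\kappa(x_*)$ supplied by Lemma~\ref{lemma:func_F}, and the terminal value $0$ (attained at $\pi$ when $\kappa<1$). For instance, in regime (C) the graph dips to a minimum $F_\kappa(x_*)>1$ on $(\pi/2,\pi/\kappa)$, so the line $y=t$ meets it twice exactly when $t\ge F_\kappa(x_*)$, giving the two roots $\varphi_1<\varphi_2$; when $1/\kappa\le t<F_\kappa(x_*)$ there is no root on $(\pi/2,\pi/\kappa)$ and none on $(0,\pi/2)$ either, forcing $b<0$ throughout; and when $t<1/\kappa$ a single root $\varphi<\pi/2$ appears on the decreasing branch before the pole. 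The remaining regimes are handled identically, and the sign of $b$ on each resulting subinterval follows by multiplying the sign of $F_\kappa-t$ by that of $\cos\theta$.

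The one point demanding separate care is $\theta=\pi/2$, where the factorization is indeterminate because $\cos\theta$ vanishes while $F_\kappa$ has a pole (present precisely when $\kappa<2$). There I would evaluate $b$ directly from \eqref{eq:b}, obtaining $b(\pi/2)=\cot(\kappa\pi/2)$, which is positive for $\kappa<1$ and negative for $1<\kappa<2$; this confirms that $b$ does not change sign across $\pi/2$ in the subcases where the nearest roots lie strictly to one side, so the pole is cosmetic rather than a genuine sign change. The main obstacle is thus not conceptual but organisational: it is the exhaustive bookkeeping needed to verify, across all regimes and all sub-threshold ranges of $t$, that the branch on which each crossing occurs places $\varphi$ (or $\varphi_1,\varphi_2$) on the correct side of $\pi/2$ and reproduces exactly the tabulated sign pattern. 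The degenerate coincidence $t=F_\kappa(x_*)$, where the two roots merge into $\varphi_1=\varphi_2=x_*$, is exactly the boundary case flagged in the statement and emerges from the same branch analysis in the limit.
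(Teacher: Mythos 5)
Your proposal is correct and follows essentially the same route as the paper: the authors likewise derive the table from the factorization $b(\theta)=\bigl((1-2a)+F_\kappa(\theta)\bigr)\cos\theta$, the separate evaluation $b(\pi/2)=\cot(\kappa\pi/2)$, and the monotonicity/extremum information on $F_\kappa$ from Lemmas~\ref{lemma:func_F} and~\ref{lemma:table_F}. The paper leaves the case-by-case bookkeeping implicit, so your write-up is simply a more explicit version of the intended argument.
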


Let us consider the function $b'(\theta)$.
If $\kappa=\frac12$, then we have
\begin{equation}
    \label{eq:b prime k12}
    b'(\theta)=(1-2a)\cos\theta+2\sin\theta\frac{\cos\frac\theta2}{\sin\frac\theta2}=
    (1-2a)\cos\theta+2\cos^2\Bigl(\frac\theta2\Bigr)
    =
    2(a-1)\sin\theta.
\end{equation}
Let us assume that $\kappa\ne\frac12$.
Since the function $b(\theta)$ can be also written as
\[
\begin{array}{r@{\ }c@{\ }l}
b(\theta)
&=&
\ds
\cos\theta+\sin\theta\cot(\kappa\theta)-2a\cos\theta
=
\frac{\cos\theta\sin(\kappa\theta)+\sin\theta\cos(\kappa\theta)}{\sin(\kappa\theta)}
-
2a\cos\theta\\
&=&
\ds
\frac{\sin((\kappa+1)\theta)}{\sin(\kappa\theta)}
-
2a\cos\theta,
\end{array}
\]
its derivative can be written by using $H_\alpha$ as
\begin{equation}
\label{eq:bp2}
    b'(\theta)=\frac{H_{2\kappa+1}(\theta)}{2\sin^2(\kappa\theta)}+2a\sin\theta.
\end{equation}
In fact,
\[
\begin{array}{r@{\ }c@{\ }l}
b'(\theta)
&=&
\ds
\frac{(\kappa{+1})\cos((\kappa+1)\theta)\sin\kappa\theta-\kappa\sin((\kappa{+1}))\theta\cos\kappa\theta}{\sin^2\kappa\theta}+2a\sin\theta\\[1em]
&=&
\ds
\frac{-\kappa\sin\theta +\cos((\kappa+1)\theta)\sin\kappa\theta}{\sin^2\kappa\theta}+2a\sin\theta\\[1em]
&=&
\ds
\frac{-\kappa\sin\theta+\frac12(\sin((2\kappa+1)\theta)-\sin\theta)}{\sin^2\kappa\theta}+2a\sin\theta\\[1em]
&=&
\ds
\frac{\sin((2\kappa+1)\theta)-(2\kappa+1)\sin\theta}{2\sin^2\kappa\theta}+2a\sin\theta\\
&=&
\ds
\frac{H_{2\kappa+1}(\theta)}{2\sin^2(\kappa\theta)}+2a\sin\theta.
\end{array}
\]
Let us set
\begin{equation}\label{def:B}
B(\theta):=2\sin^2(\kappa\theta)b'(\theta)=H_{2\kappa+1}(\theta)+4a\sin\theta\sin^2(\kappa\theta),\quad
\ell(\kappa,a):=4a\kappa-2\kappa-1.
\end{equation}
Then, the signatures of $b'(\theta)$ and $B(\theta)$ are the same,
and the derivative of $B(\theta)$ is given as,
if $\ell(\kappa,a)\ne0$ then
\begin{equation}
\label{eq:Bprime}
    B'(\theta)
    =
    2\ell(\kappa,a)\cos\theta\sin^2(\kappa\theta)\Bigl(F_\kappa(\theta)+J_\kappa(a)\Bigr),
\end{equation}
and if $\ell(\kappa,a)=0$ then
\begin{equation}\label{eq:Bprime2}
    B'(\theta)
    =
    \frac{1-4\kappa^2}{\kappa}\cos\theta\sin^2(\kappa\theta).
\end{equation}
In fact, we have
\[
\begin{array}{r@{\ }c@{\ }l}
B'(\theta)
&=&
\ds
-2(2\kappa+1)\sin((\kappa+1)\theta)\sin(\kappa\theta)
+
4a(\sin\kappa\theta)(\cos\theta\sin(\kappa\theta)+2\kappa\sin\theta\cos(\kappa\theta))\\
&=&
\ds
(4a-4\kappa-2)\cos\theta\sin^2(\kappa\theta)
+
(8a\kappa-4\kappa-2)\sin\theta\sin(\kappa\theta)\cos(\kappa\theta)\\
&=&
\ds
2\cos\theta\sin^2(\kappa\theta)(2a-2\kappa-1+(4a\kappa-2\kappa-1)\tan\theta\cot(\kappa\theta))\\
&=&
\ds
2(4a\kappa-2\kappa-1)\cos\theta\sin^2(\kappa\theta)\Bigl(F_\kappa(\theta)+\frac{2a-2\kappa-1}{4a\kappa-2\kappa-1}\Bigr).
\end{array}
\]
Note that two conditions $\ell(\kappa,a)=0$ and $2a-2\kappa-1=0$ occur simultaneously only when $\kappa=\frac12$.
Set
\[
G(x):=\frac{2x^2+3x+1}{6x}.
\]

\begin{Lemma}
\label{lemma:func_b prime}
\begin{enumerate}[\rm(1)]
    \item Suppose that $0<\kappa<\frac12$, or $\kappa>1$.
    If $a> G(\kappa)$, then there exists a unique $\varphi_*\in I_0$ such that $b'(\varphi_*)=0$,
    and one has $b'(\theta)>0$ for $\theta\in(0,\varphi_*)$, and $b'(\theta)<0$ for $\theta\in(\varphi_*,\pi)$.
    If $a\le G(\kappa)$, then one has $b'(\theta)$ for any $\theta\in I_0$.
    \item Suppose that $\frac12<\kappa<1$.
    If $a<G(\kappa)$, then there exists a unique $\varphi_*\in I_0$ such that $b'(\varphi_*)=0$,
    and one has $b'(\theta)<0$ for $\theta\in(0,\varphi_*)$, and $b'(\theta)>0$ for $\theta\in(\varphi_*,\pi)$.
    If $a<G(\kappa)$, then one has $b'(\theta)>0$ for any $\theta\in I_0$.
\end{enumerate}

\end{Lemma}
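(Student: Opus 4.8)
The plan is to reduce everything to the sign of the auxiliary function $B(\theta)=2\sin^2(\kappa\theta)b'(\theta)$ from \eqref{def:B}, which has the same sign as $b'(\theta)$ on $I_0$, and to anchor the argument with two pieces of boundary data. First, $B(0)=0$ because $H_{2\kappa+1}(0)=0$. Second, I would evaluate $B$ at the right endpoint of $I_0$: using $\sin^2(\kappa\cdot\tfrac{\pi}{\kappa})=0$, resp.\ $\sin\pi=0$, one gets $B(\pi/\kappa)=-2\kappa\sin(\pi/\kappa)<0$ when $\kappa>1$, and $B(\pi)=-\sin(2\kappa\pi)$, which is negative for $0<\kappa<\tfrac12$ and positive for $\tfrac12<\kappa<1$. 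These endpoint signs are what ultimately produce the opposite shapes in parts (1) and (2).

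Next I would pin down the behaviour near $\theta=0$. Expanding \eqref{eq:bp2} to third order, and using $(2\kappa+1)\bigl((2\kappa+1)^2-1\bigr)=4\kappa(\kappa+1)(2\kappa+1)$, one obtains
\[
b'(\theta)=2\bigl(a-G(\kappa)\bigr)\theta+O(\theta^3),
\]
since $\tfrac{(2\kappa+1)(\kappa+1)}{6\kappa}=G(\kappa)$. This identifies $G(\kappa)$ as the exact threshold and fixes the sign of $b'$ just to the right of $0$: positive when $a>G(\kappa)$, negative when $a<G(\kappa)$, matching the two cases. Equivalently, $a=G(\kappa)$ is the unique value at which $F_\kappa(0^+)+J_\kappa(a)=\tfrac1\kappa+J_\kappa(a)=0$, a one-line computation I would record to tie $G$ to the factorisation \eqref{eq:Bprime}.

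The heart of the proof is the sign analysis of $B'$. By \eqref{eq:Bprime}, on the interior of $I_0$,
\[
\operatorname{sign}B'(\theta)=\operatorname{sign}\ell(\kappa,a)\cdot\operatorname{sign}(\cos\theta)\cdot\operatorname{sign}\bigl(F_\kappa(\theta)+J_\kappa(a)\bigr),
\]
with the degenerate case $\ell(\kappa,a)=0$ handled separately via \eqref{eq:Bprime2}. The factor $\cos\theta$ changes sign only at $\pi/2$ (present in $I_0$ exactly when $\kappa<2$); the sign of $\ell(\kappa,a)$ is constant and located using that its zero $a_0=\tfrac12+\tfrac1{4\kappa}$ is the pole of $J_\kappa$ (Lemma~\ref{lemma:func_J}), where $a_0-G(\kappa)=\tfrac{1-4\kappa^2}{12\kappa}$ tells me on which side of the threshold it sits (in particular $a>G(\kappa)$ forces $\ell>0$ when $\kappa>1$); and the zeros of the last factor are the solutions of $F_\kappa(\theta)=-J_\kappa(a)$, i.e.\ the intersections of the graph of $F_\kappa$ from the table in Lemma~\ref{lemma:table_F} with the horizontal level $-J_\kappa(a)$, whose height is controlled through Lemma~\ref{lemma:func_J}. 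Running through cases (A)--(D) for $\kappa$ and the sub-ranges of $a$ determined by where $-J_\kappa(a)$ falls relative to $1/\kappa$, $0$ and $F_\kappa(x_*)$, I expect to find in each admissible regime that these three factors combine to give $B'$ exactly one sign change.

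Finally I would assemble the conclusion: a single sign change of $B'$ makes $B$ unimodal on $I_0$, and together with $B(0)=0$ and the endpoint sign this forces $B$, hence $b'$, to have exactly the claimed single interior zero $\varphi_*$ with the stated sign pattern (part (1) for $a>G(\kappa)$, part (2) for $a<G(\kappa)$); when the near-$0$ sign and the endpoint sign agree, unimodality forces $B$ to keep one sign throughout, giving the one-signed alternative. The main obstacle is precisely this bookkeeping: combining the three sign factors correctly across all cases (A)--(D) and all sub-ranges of $a$, treating the pole of $F_\kappa$ at $\pi/2$ and the coincident pole of $J_\kappa$ / zero of $\ell$, and—most delicately—ruling out extra zeros of $B$ by invoking the endpoint value rather than the local data at $0$ alone.
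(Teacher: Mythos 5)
Your overall strategy---reducing to $B(\theta)=2\sin^2(\kappa\theta)b'(\theta)$, using the factorisation \eqref{eq:Bprime} together with Lemmas~\ref{lemma:table_F} and~\ref{lemma:func_J}, and anchoring with $B(0)=0$ and the endpoint values $B(\pi)=-\sin(2\kappa\pi)$, $B(\pi/\kappa)=-2\kappa\sin(\pi/\kappa)$---is essentially the paper's, and your Taylor expansion $b'(\theta)=2\bigl(a-G(\kappa)\bigr)\theta+O(\theta^3)$ is a correct and arguably cleaner way of identifying the threshold than the paper's remark that $G(\kappa)$ solves $F_\kappa(0)+J_\kappa(a)=0$. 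The genuine gap is in your endgame. You assert that in each admissible regime the three factors of $B'$ combine to give ``exactly one sign change,'' so that $B$ is unimodal and the endpoint sign finishes the argument. This fails precisely in the two regimes the paper singles out as non-trivial: (i) $\frac12<\kappa<1$ with $0\le -J_\kappa(a)\le F_\kappa(x_*)$ and (ii) $1<\kappa<2$ with $-J_\kappa(a)>F_\kappa(x_*)$. By the shape of $F_\kappa$ in Lemma~\ref{lemma:table_F} (cases (B) and (C)), the level $-J_\kappa(a)$ is then attained at \emph{two} points $\varphi_1<\varphi_2$ beyond $\pi/2$ (the apparent sign changes of $\cos\theta$ and of $F_\kappa$ at its pole $\pi/2$ cancel, since $\cos\theta\,F_\kappa(\theta)=\sin\theta\cot(\kappa\theta)$ is continuous there), so $B'$ has sign pattern $+,-,+$ (resp.\ $-,+,-$). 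Hence $B$ rises, falls, and rises again; knowing only $B(0)=0$ and $B(\pi)>0$ (resp.\ $B(\pi/\kappa)<0$) does not exclude $B$ dipping through zero on $(\varphi_1,\varphi_2)$, which would create extra zeros of $b'$ and destroy the claimed one-signedness.

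The missing ingredient is a pointwise bound on $B$ at the interior critical points. The paper obtains it from the alternative identity
\[
b'(\theta)\sin^2(\kappa\theta)=(2a-\kappa-1)\sin\theta+\cos\theta\sin(\kappa\theta)\cos(\kappa\theta)\bigl((2a-1)F_\kappa(\theta)+1\bigr),
\]
combined with the critical-point relation $F_\kappa(\varphi_i)=-J_\kappa(a)$, which gives $(2a-1)F_\kappa(\varphi_i)+1=\frac{4a(a-1)}{\ell(\kappa,a)}$ as in \eqref{eq:lemma18-1}. Together with the sign of $\cos\varphi_i\sin(\kappa\varphi_i)\cos(\kappa\varphi_i)$ and the location of $a$ relative to $1$ (one shows $a>1$ in case (i), $a<1$ in case (ii)), this yields $B(\varphi_i)>0$ in case (i) and $B(\varphi_i)<0$ in case (ii); thus the interior local extremum of $B$ stays on the correct side of zero and $b'$ keeps a single sign on all of $I_0$. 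You need to add this estimate (or an equivalent one) for your argument to close; the rest of your plan is sound.
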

\begin{proof}
Since we have \eqref{eq:Bprime} or \eqref{eq:Bprime2}, we can use Lemmas \ref{lemma:func_F} and \ref{lemma:func_J}.
This lemma can be obtained by dividing cases and by elementary but tedious calculations.
Note that $\frac{2\kappa^2+3\kappa+1}{6\kappa}$ comes from the equation $0=F_{\kappa}(0)+J_\kappa(a)$.
By differentiating $b(\theta)$ by using expression \eqref{eq:b},
we obtain
\[
b'(\theta)=(2a-1)\sin\theta+\cos\theta\cot(\kappa\theta)-\frac{\kappa\sin\theta}{\sin^2(\kappa\theta)},
\]
and hence
\[
\begin{array}{r@{\ }c@{\ }l}
\ds
b'(\theta)\sin^2(\kappa\theta)
&=&
\ds
(2a-1)\sin\theta(1-\cos^2(\kappa\theta))+\cos\theta\sin(\kappa\theta)\cos(\kappa\theta)-\kappa\sin\theta\\
&=&
\ds
(2a-\kappa-1)\sin\theta+(2a-1)\sin\theta\cos^2(\kappa\theta)+\cos\theta\sin(\kappa\theta)\cos(\kappa\theta)\\
&=&
\ds
(2a-\kappa-1)\sin\theta+
\cos\theta\sin(\kappa\theta)\cos(\kappa\theta)((2a-1)F_\kappa(\theta)+1).
\end{array}
\]
If $\varphi_i$ satisfies $B'(\varphi_i)=0$, then we have $F_\kappa(\varphi_i)+J_\kappa(a)=0$ and hence
\begin{equation}\label{eq:lemma18-1}
(2a-1)F_\kappa(\varphi_i)+1
=
\frac{-(2a-1)(2a-2\kappa-1)+(4a\kappa-2\kappa-1)}{\ell(\kappa,a)}
=
\frac{4a(a-1)}{\ell(\kappa,a)}.
\end{equation}

The non-trivial cases are given as
(i) $\frac12<\kappa<1$ and $0\le -J_\kappa(a)\le F_\kappa(x_*)$,
or 
(ii) $1<\kappa<2$ and $-J_\kappa(x)>F_\kappa(x_*)$.

first we consider the case (i).
In this case,
Lemma~\ref{lemma:func_J} tells us that we have $\ell(\kappa,a)>0$.
Since $-J_\kappa(a)$ is monotonic decreasing when $\ell(\kappa,a)>0$ and since $-J_\kappa(1)=1$ by \eqref{eq:J-1},
we see that $a$ need satisfy $a\in(1,\frac12+\kappa)$
because $F_\kappa(x_*)<1$ by Lemma~\ref{lemma:func_F}.
Thus, we have $2a-\kappa-1>0$.
Again by Lemma~\ref{lemma:func_F},
the equation $B'(\theta)=F_\kappa(\theta)+J_\kappa(a)=0$ has
at most two solutions $\varphi_i\in(\frac{\pi}{2\kappa},\pi)$, $i=1,2$.
Set $\varphi_1\le \varphi_2$.
Then, since 
\[
B(0)=0,\quad
B(\pi)=\sin(2\kappa\pi)>0,
\]
we have
\[
\begin{array}{*{9}{c|}c}
\theta&0&\cdots&\frac\pi2&\cdots&\varphi_1&\cdots&\varphi_2&\cdots&\pi\\ \hline
\ell(\kappa,a)&&+&&+&&+&&+\\ \hline
\cos\theta&&+&0&-&&-&&-&\\ \hline
F_\kappa+J_\kappa&&+&&-&0&+&0&-&\\ \hline \hline
B'(\theta)&&+&&+&0&-&0&+&\\ \hline
B(\theta)&0&\nearrow&&\nearrow&&\searrow&&\nearrow&B(\pi)>0
\end{array}
\]
We shall show that $B(\varphi_i)>0$, $(i=1,2)$,
which implies $b'(\theta)>0$.
Since $a>1$, we have $(2a-1)F_\kappa(\varphi_i)+1>0$ by \eqref{eq:lemma18-1}.
Since $\varphi_i\in(\frac{\pi}{2\kappa},\pi)$, we see that $\cos\varphi_i\sin(\kappa\varphi_i)\cos(\kappa\varphi_i)>0$ and hence we obtain $B(\varphi_i)>0$.

Next, we consider the case (ii).
Then, since $F_\kappa(x_*)>1$ by Lemma~\ref{lemma:func_F} and since $J_\kappa(1)=-1$ by \eqref{eq:J-1}, 
we need to have $\ell(\kappa,a)>0$ and $a<1$,
whence $2a-\kappa-1<0$.
Lemma~\ref{lemma:func_J} tells us that the function $-J_\kappa(a)$ is monotonic decreasing if $\ell(\kappa,a)>0$.
Again by Lemma~\ref{lemma:func_F},
the equation $B'(\theta)=F_\kappa(\theta)+J_\kappa(a)=0$ has
at most two solutions $\varphi_i\in(\frac{\pi}{2\kappa},\pi)$, $i=1,2$.
Let $\varphi_1\le \varphi_2$.
Then, since
\[
B(0)=0,\quad
B\Bigl(\frac\pi\kappa\Bigr)=-2\kappa\sin\frac\pi\kappa<0,
\]
we have
\[
\begin{array}{*{9}{c|}c}
\theta&0&\cdots&\frac\pi2&\cdots&\varphi_1&\cdots&\varphi_2&\cdots&\frac\pi\kappa\\ \hline
\ell(\kappa,a)&&+&&+&&+&&+\\ \hline
\cos\theta&&+&0&-&&-&&-&\\ \hline
F_\kappa+J_\kappa&&-&&+&0&-&0&+&\\ \hline \hline
B'(\theta)&&-&&-&0&+&0&-&\\ \hline
B(\theta)&0&\searrow&&\searrow&&\nearrow&&\searrow&B(\frac\pi\kappa)<0
\end{array}
\]
Since $a<1$, we have $(2a-1)F_\kappa(\varphi_i)+1<0$ b y\eqref{eq:lemma18-1}.
Since $\varphi_i\in(\frac\pi2,\frac\pi\kappa)$, we have $\cos\varphi_i\sin(\kappa\varphi_i)\cos(\kappa\varphi_i)>0$
so that we obtain $B(\varphi_i)<0$.
\end{proof}

\section{The domain $\Omega$ for $\kappa<+\infty$}

In this section,
we shall determine $\Omega$ for finite $\kappa>0$, which is the connected component of the set $\set{z=x+yi\in\domainF}{F(x,y)>0}$ containing $z=0$.
Let $r_\pm(\theta)$ be the solutions of the equation \eqref{eq:r}
and let $\alpha_i$, $i=1,2$ are the solutions of the equation \eqref{eq:quad}.
Since $F(x,y)$ is a continuous function, 
the boundary $\partial\Omega$ is included in the set $\set{z=x+yi\in\C}{F(x,y)=0}\subset f_{\kappa,\gamma}^{-1}(\R)$.
Since $F$ is even function on $y$, the domain $\Omega$ is symmetric with respect to the real axis,
and hence we consider $D:=\Omega\cap\C^+$.

\begin{Proposition}
\label{prop:Omega k=1}
Suppose that $\kappa=1$.
\begin{enumerate}[\rm(1)]
    \item If $\gamma>1$, then one has $\Omega=\C\setminus\{-\frac1\gamma\}$.
    \item If $0<\gamma\le 1$, then one has 
    $\Omega=\set{z=x+yi\in\domainF}{\bigl(x+\frac1\gamma\bigr)^2+y^2> \frac{1-\gamma}{\gamma^2}}$.
    \item If $\gamma=0$, then one has $\Omega=\set{z=x+yi\in\domainF}{1+2x>0}$.
    \item If $\gamma<0$, then one has 
    $\Omega=\set{z=x+yi\in\domainF}{\bigl(x+\frac1\gamma\bigr)^2+y^2< \frac{1-\gamma}{\gamma^2}}$.
    In particular, $\Omega$ is bounded.
\end{enumerate}
\end{Proposition}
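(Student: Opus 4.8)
The plan is to sidestep the $r$--$\theta$ machinery entirely: for $\kappa=1$ the function $F$ collapses to a quadratic polynomial, after which the proposition is pure planar geometry. First I would note that $\kappa=1$ forces $\kappa\theta(x,y)=\theta(x,y)=\mathrm{Arg}(1+z)$, so that $\cot\theta=(1+x)/y$ for $y\neq0$ and therefore $y\cot(\kappa\theta)=1+x$. Substituting this into \eqref{im} gives
\[
F(x,y)=\bigl(x+\gamma x^2+\gamma y^2\bigr)+(1+x)=\gamma(x^2+y^2)+2x+1 .
\]
By the evenness of $F$ in $y$ and the limit convention $F(x,0)=\lim_{y\to0}F(x,y)$, this identity holds on all of $\domainF=\C\setminus\{-\frac1\gamma\}$, so the task reduces to describing the open set $\{\gamma(x^2+y^2)+2x+1>0\}$.

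Next I would complete the square. For $\gamma\neq0$,
\[
F(x,y)=\gamma\Bigl[\bigl(x+\tfrac1\gamma\bigr)^2+y^2\Bigr]+\frac{\gamma-1}{\gamma},
\]
so $F>0$ is equivalent to $\bigl(x+\tfrac1\gamma\bigr)^2+y^2>\tfrac{1-\gamma}{\gamma^2}$ for $\gamma>0$, and to $\bigl(x+\tfrac1\gamma\bigr)^2+y^2<\tfrac{1-\gamma}{\gamma^2}$ for $\gamma<0$, the inequality reversing upon division by a negative number. The four cases then fall out by reading off the sign of $\tfrac{1-\gamma}{\gamma^2}$: it is negative for $\gamma>1$ (so $F>0$ throughout $\domainF$, giving (1)); nonnegative for $0<\gamma\le1$ (the exterior of a disc centred at $-\tfrac1\gamma$, giving (2)); and positive for $\gamma<0$ (the interior of such a disc, hence bounded, giving (4)). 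Case (3), $\gamma=0$, is immediate from $F=1+2x$, a half-plane.

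It then remains to check that in each case $\{F>0\}\cap\domainF$ is connected and contains $z=0$, so that it equals its connected component $\Omega$ through the origin rather than a proper subset. Connectedness is clear, since the punctured plane, the exterior of a disc, a half-plane, and the interior of a disc are each connected; and $z=0$ lies in the right region because $F(0,0)=1>0$. The only genuine bookkeeping is the excluded point $-\tfrac1\gamma$: for $\gamma>0$ it is the centre of the disc and already lies outside $\{F>0\}$ (collapsing to the single removed point when $\gamma\ge1$), while for $\gamma<0$ it is the centre of the bounded disc and must be deleted, leaving a punctured disc that is still connected and still contains the origin. I anticipate no serious obstacle; the only care needed is the degenerate boundary value $\gamma=1$, where the disc shrinks to the point $-1=-\tfrac1\gamma$ and the exterior-of-disc description of (2) agrees with $\domainF$.
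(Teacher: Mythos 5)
Your proposal is correct and follows essentially the same route as the paper: both reduce the condition to the quadratic $1+2x+\gamma(x^2+y^2)>0$ (you via the identity $y\cot\theta(x,y)=1+x$ for $\kappa=1$, the paper by expanding $f_{1,\gamma}(z)=\frac{z(1+z)}{1+\gamma z}$ and reading off the imaginary part), and then identify the connected component of this circle's complement containing $z=0$. Your completion of the square also confirms that the centre should be $-\frac1\gamma$, exposing the $\frac{1}{2\gamma}$ appearing in the paper's proof as a typo.
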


\begin{proof}%[Proof of Proposition \ref{prop:Omega k=1}]
For $z=x+yi$, we have
\[
f_{1,\gamma}(z)=\frac{(x+\gamma x^2+\gamma y^2+yi)(x+1+yi)}{(1+\gamma x)^2+\gamma^2y^2}=
\frac{1}{{(1+\gamma x)^2+\gamma^2y^2}}\Bigl(\begin{array}{l}
(x+\gamma x^2+\gamma y^2)(1+x)-y^2\\
\quad iy(1+2x+\gamma x^2+\gamma y^2)
\end{array}\Bigr).
\]
Thus, $f_{1,\gamma}(z)\in\R$ implies $y=0$ or 
\[
1+2x+\gamma x^2+\gamma y^2=0\iff
\begin{cases}
\Bigl(x+\frac{1}{2\gamma}\Bigr)^2+y^2=\frac{1-\gamma}{\gamma^2}&(\text{if }\gamma\ne 0),\\
x=-\frac12&(\text{if }\gamma=0).
\end{cases}
\]
Therefore, we have $\Omega=\set{z\in\domainF}{\mathrm{Re}\,z>-\frac12}$ when $\gamma=0$ because it contains $z=0$.
Suppose that $\gamma\ne 0$ and set $C=\set{z=x+yi\in\C}{1+2x+\gamma x^2+\gamma y^2=0}$.
If $\gamma >1$, then $C=\emptyset$, which implies $\Omega=\C\setminus\{-\frac1\gamma\}$.
Assume that $0<\gamma\le 1$.
Then, $C\ne\emptyset$ and $z=0$ does not contained in the interior of $C$ and hence
$\Omega$ is the out side of $C$, which is written as
$\Omega=\set{z=x+yi\in\C}{1+2x+\gamma x^2+\gamma y^2>0}$.
Assume that $\gamma<0$.
Then, $C\ne\emptyset$ and $z=0$ is contained in the interior of $C$,
and thus $\Omega$ is the interior of $C$, which is written as 
\[\Omega=\set{z=x+yi\in\domainF}{1+2x+\gamma x^2+\gamma y^2>0}.\]
The proof is completed.
\end{proof}

Recall that $a=\kappa\gamma$.
Set $\theta_0:=\frac{\pi}{\kappa}$ and let $I_0$ be the interval $(0,\min(\pi,\theta_0))$.

\begin{Proposition}
\label{prop:Omega}
Let $\kappa>0$ with $\kappa\ne 1$.
For $z\in\C\setminus\{x\le -\kappa\}$, one sets $re^{\theta}=1+\frac{z}{\kappa}$.
Then, $\Omega$ can be described as follows.
\begin{enumerate}[\quad\rm(1)]
    \item If $a<0$, then there exists a unique $\theta_*\in(0,\frac{\pi}{\kappa+1})$ such that $r_+(\theta_*)=r_-(\theta_-)$ and
    that $r_\pm(\theta)$ are both positive with $r_-(\theta)\le r_+(\theta)$ on $(0,\theta_*)$.
    Moreover, one has
    \[
    \Omega=\set{z\in\domainF}{|\theta|<\theta_*\text{ and } r_-(\theta)<r<r_+(\theta)}.
    \]
    In particular, $\Omega$ is bounded.
    One has $\alpha_1,\alpha_2\in\partial\Omega$ and $-\frac1\gamma\in\overline{\Omega}$, whereas $-\kappa\not\in\overline{\Omega}$.
    \item If $a=0$, then one has $r(\theta)=r_\pm(\theta)=\frac{\sin(\kappa\theta)}{\sin((\kappa+1)\theta)}$ which is positive on the interval $(0,\frac{\pi}{\kappa+1})$.
    Moreover, one has
    \[
    \Omega=\set{z\in\domainF}{|\theta|<\frac{\pi}{\kappa+1}\text{ and }r>r(\theta)}.
    \]
    $\Omega$ has an asymptotic line $y=\pm(x+\frac{\kappa^2}{\kappa+1})\tan \theta_1$.
    One has $\alpha_1=\alpha_2=-\frac{\kappa}{\kappa+1}\in\partial\Omega$ and $-\kappa\not\in\overline{\Omega}$.
    \item If $0<a<1$, then $r_+(\theta)$ is the only positive solution of $\eqref{eq:r}$ on $I_0$,
    and one has
    \[
    \Omega=\set{z\in\domainF}{|\theta|<\min(\theta_0,\pi)\text{ and } r>r_+(\theta)}.
    \]
    If $\kappa>1$, then $\Omega$ has an asymptotic line $y=\pm(\tan\theta_0)(x+\kappa-\frac1a)$.
    One has $\alpha_2\in\partial\Omega$, whereas $\alpha_1,-\frac1\gamma,-\kappa\not\in\overline{\Omega}$.
    \item Suppose that $a=1$ and $\kappa\ne 1$.
        \begin{enumerate}[\rm(a)]
            \item If $0<\kappa<1$, then one has $r_+(\theta)=0$ and $r_-(\theta)=-b(\theta)<0$ for $\theta\in I_0$, and
            $\Omega=\domainF$.
            One has $\alpha_1\in\partial\Omega$ and $\alpha_2=-\kappa=-\frac1\gamma\in\partial\Omega$.
            \item If $\kappa>1$, then one has $r_+(\theta)=-b(\theta)>0$ and $r_-(\theta)=0$ for $\theta\in I_0$,
            and one has
            \[
            \Omega=\set{z\in\domainF}{|\theta|<\theta_0\text{ and }r>r_+(\theta)}.
            \] 
            One has $\alpha_2=-1\in\partial\Omega$, while $\alpha_1=-\kappa=-\frac1\gamma\not\in\overline{\Omega}$.
            Moreover, $\Omega$ has an asymptotic line $y=\pm(\tan\theta_0)(x+\kappa-\frac1a)$.
        \end{enumerate}
    \item Suppose that $a>1$.
        \begin{enumerate}[\rm(a)]
            \item If $\kappa>1$ with $D(0)\ge 0$, 
            then $r_\pm(\theta)$ are both positive in $I_0$ 
            with $r_-(\theta)\le r_+(\theta)$, and one has
        \[
        \Omega=\set{z\in\domainF}{|\theta|<\theta_0\text{ and }r>r_+(\theta)}.
        \]
        $\Omega$ has an asymptotic line $y=\pm(\tan\theta_0)(x+\kappa-\frac1a)$.
        One has $\alpha_2\in\partial\Omega$, while $-\kappa,-\frac1\gamma,\alpha_1\not\in\overline{\Omega}$.
        \item If $\kappa>1$ and $D(0)<0$, 
        then there exists a unique $\theta_*\in(0,\theta_0)$ such that $D(\theta_*)=0$, 
        and $r_\pm(\theta)$ are both positive in the interval $(\theta_*,\theta_0)$.
        Moreover, one has
        \[
        \Omega=\set{z\in\domainF}{|\theta|<\theta_0\text{ and if }|\theta|\ge \theta_*\text{ then }0<r<r_-(\theta)\text{ or }r>r_+(\theta)}.
        \]
        In this case, $\alpha_i$, $i=1,2$ are both non-real and one has $\alpha_i,-\kappa\in\partial\Omega$ and $-\frac1\gamma\in\overline{\Omega}$.
        Moreover,
        $\Omega$ has an asymptotic line $y=\pm(\tan\theta_0)(x+\kappa-\frac1a)$.
        \item If $0<\kappa< 1$, 
        then there are no $\theta$ such that $D(\theta)>0$, and one has $\Omega=\domainF$.
        \end{enumerate}
\end{enumerate}
\end{Proposition}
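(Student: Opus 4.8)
The plan is to analyze the quadratic \eqref{eq:r}, namely $ar^2+b(\theta)r+(a-1)=0$, fibrewise in the angle $\theta$, using Vieta's relations $r_+(\theta)r_-(\theta)=\frac{a-1}{a}$ and $r_+(\theta)+r_-(\theta)=-\frac{b(\theta)}{a}$. The sign of the product $\frac{a-1}{a}$ already separates the regimes: for $0<a<1$ it is negative, so exactly one of $r_\pm(\theta)$ is positive, which produces the single-graph descriptions in cases (2) and (3); for $a=1$ one root is $0$ and the other equals $-b(\theta)$, giving case (4); and for $a<0$ or $a>1$ the product is positive, so the two roots share a sign, which is then fixed by the sign of the sum $-\frac{b(\theta)}{a}$, i.e.\ by the signature of $b(\theta)$ read off from Lemma~\ref{lemma:signature b}. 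This yields cases (1) and (5).

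Second, I would pin down the $\theta$-interval on which the roots are real using $D(\theta)=b(\theta)^2-4a(a-1)$ together with $D'(\theta)=2b(\theta)b'(\theta)$. Feeding the signature table of $b$ (Lemma~\ref{lemma:signature b}) and of $b'$ (Lemma~\ref{lemma:func_b prime}) into this product gives the monotonicity pattern of $D$, hence the number and location of its zeros on $I_0$. When $a\le 1$ one has $4a(a-1)\le 0$, so $D\ge 0$ throughout $I_0$ and the roots are real on the whole interval; the only genuinely delicate count is $a>1$, where $4a(a-1)>0$ forces a threshold, splitting case (5) into the subcases $D(0)\ge 0$ (5a) and $D(0)<0$ (5b).

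Third, to assemble $\Omega$ I would use that $F(0,0)=1>0$, so the origin, which corresponds to $(r,\theta)=(1,0)$, lies in $\{F>0\}$; since the level set $\{F=0\}$ is exactly the polar graph $r=r_\pm(\theta)$, the connected component containing $z=0$ is determined by deciding on which side of these graphs the point $r=1$ sits, and the root analysis of the previous two steps supplies this (e.g.\ $r>r_+(\theta)$, or $r_-(\theta)<r<r_+(\theta)$). The membership assertions for the distinguished points then follow from the special values \eqref{eq:q special values}, $q(-\frac1\gamma)=\frac{a-1}{a}$ and $q(-\kappa)=(a-1)\kappa$, together with the identification of $\alpha_1,\alpha_2$ as the $\theta=0$ solutions of \eqref{eq:r}; their sign patterns decide whether each of $\alpha_i$, $-\frac1\gamma$, $-\kappa$ lands on $\partial\Omega$, in $\overline{\Omega}$, or outside. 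Finally, the asymptotic lines for $\kappa>1$ come from letting $\theta\to\theta_0=\frac\pi\kappa$: there $\cot(\kappa\theta)\to-\infty$ so $b(\theta)\to-\infty$, and expanding $r_+(\theta)\sim -\frac{b(\theta)}{a}$ and then $x=\kappa(r\cos\theta-1)$, $y=\kappa r\sin\theta$ to leading and subleading order yields the stated line of slope $\tan\theta_0$.

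The hardest part will be case (5b), where $a>1$, $\kappa>1$ and $D(0)<0$. There I must prove that $D$ has a \emph{unique} zero $\theta_*\in(0,\theta_0)$ and that both $r_\pm$ are positive on $(\theta_*,\theta_0)$, while the origin-component acquires the two-sided description ``$0<r<r_-(\theta)$ or $r>r_+(\theta)$''. This demands combining the $b$- and $b'$-tables tightly enough to exclude spurious sign changes of $D$, and simultaneously checking that the now non-real critical points $\alpha_1,\alpha_2$ together with $-\kappa$ all lie on $\partial\Omega$ whereas $-\frac1\gamma$ only lies in $\overline{\Omega}$; this is the one configuration in which the boundary fails to be a single graph over $\theta$.
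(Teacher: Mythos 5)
Your overall strategy coincides with the paper's: Vieta's relations to sort the sign pattern of $r_\pm(\theta)$ according to the sign of $\frac{a-1}{a}$, the discriminant $D=b^2-4a(a-1)$ with $D'=2bb'$ fed by the signature tables of $b$ and $b'$ (Lemmas~\ref{lemma:signature b} and \ref{lemma:func_b prime}), the special values $q(-\frac1\gamma)=\frac{a-1}{a}$ and $q(-\kappa)=(a-1)\kappa$ for the membership claims, and the expansion $r_+(\theta)\sim-\frac{b(\theta)}{a}$ as $\theta\to\theta_0$ for the asymptotic lines. So the issue is not the route but one step on it.

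The concrete error is in your second step: you assert that $4a(a-1)\le 0$, hence $D\ge 0$ throughout $I_0$, whenever $a\le 1$, and that the only delicate threshold analysis occurs for $a>1$. This is true for $0\le a\le 1$ but false for $a<0$: there both factors $a$ and $a-1$ are negative, so $4a(a-1)>0$ and $D$ can become negative. Indeed, in case (1) the paper computes $D(0)=\bigl(1+\frac1\kappa\bigr)^2-\frac{4a}{\kappa}>0$ but $D(\theta_1)=4a(1-a\sin^2\theta_1)<0$ at $\theta_1=\frac{\pi}{\kappa+1}$, and the existence and uniqueness of the angle $\theta_*\in(0,\theta_1)$ with $D(\theta_*)=0$ --- obtained by showing $b'\le 0$ on $(0,\theta_1)$ via Lemmas~\ref{lemma:table of b prime} and \ref{lemma:func_H}, so that $D$ is monotone decreasing until $b$ changes sign --- is exactly what closes the boundary curve from $\alpha_2$ to $\alpha_1$ and makes $\Omega$ bounded. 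Under your claim as written, case (1) would come out with $r_\pm$ real on all of $I_0$, you would never produce $\theta_*$, and the description $\Omega=\{\,|\theta|<\theta_*,\ r_-(\theta)<r<r_+(\theta)\,\}$ together with the boundedness assertion would not follow. You need to treat $a<0$ alongside $a>1$ as a ``positive product'' regime in which the discriminant genuinely changes sign, and supply there the same monotonicity argument for $D$ that you reserve for case (5).
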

Recall that, since $\Omega$ is symmetric with respect to the real axis,
it is enough to determine the boundary $\partial\Omega$ of $\Omega$ in the upper half plane.

\begin{proof}%[Proof of Proposition~\ref{prop:Omega}]
(1) Assume that $a<0$.
Since $r_+(\theta)\cdot r_-(\theta)=\frac{a-1}{a}>0$,
we see that $r_\pm(\theta)$ have the same signature if $r_\pm(\theta)\in\R$.
Since $a<0$, we have $b(0)=1+\frac1\kappa-2a>0$ by \eqref{def:b zero}.
Let $\theta_1:=\frac{\pi}{\kappa+1}$.
Then, 
Lemmas~\ref{lemma:table of b prime} and \ref{lemma:func_H} tells us that
$b'(\theta)\le 0$ for any $\theta\in(0,\theta_1)$.
In fact, if $\kappa$ does not satisfy $\frac12<\kappa<1$, then it is a direct consequence of Lemma~\ref{lemma:table of b prime},
and if $\frac12<\kappa<1$ then we can verify it by Lemma~\ref{lemma:table of b prime} and by the fact that $b'(\theta_1)=H_{2\kappa+1}(\theta_1)+2a\sin\theta_1<0$ by Lemma~\ref{lemma:func_H}.
Therefore, $b(\theta)$ is monotonic decreasing on the interval $(0,\theta_1)$.
If $b(\theta_1)<0$, then there exists a unique $\varphi\in(0,\theta_1)$ such that $b(\varphi)=0$,
and if $b(\theta_1)\ge 0$ then we set $\varphi=\theta_1$.
Since we have
\[
D(0)=\Bigl(1+\frac1\kappa\Bigr)^2-\frac{4a}{\kappa}>0
\quad\text{and}\quad
\begin{array}{r@{\ }c@{\ }l}
D(\theta_1)
&=&
\ds
(-2a\cos\theta_1)^2-4a(a-1)
=
4a^2\cos^2\theta_1-4a^2+4a\\
&=&
4a-4a^2\sin^2\theta_1
=
4a{(1-a\sin^2\theta_1)}<0
\end{array}
\]
and $D'(\theta)=2b(\theta)b'(\theta)$,
the increasing/decreasing table of $D(\theta)$ is given as
\[
\begin{array}{*{5}{c|}c}
\theta&0&\cdots&\varphi&\cdots&\theta_1\\ \hline
b(\theta)&&+&0&-\\ \hline
b'(\theta)&&-&-&-\\ \hline
D'(\theta)&&-&0&+\\ \hline
D(\theta)&D(0)>0&\searrow&D(\varphi)&\nearrow&D(\theta_1)<0
\end{array}
\]
and hence there exists a unique $\theta_*\in(0,\varphi)$ such that $D(\theta_*)=0$.
In particular, $D$ is monotonic decreasing in the interval $(0,\theta_*)$,
and $D(\theta_*+\delta)<0$ for $\delta>0$ such that $\theta_*+\delta<\varphi$.
Therefore,
since $r_+(\theta)+r_-(\theta)=-b(\theta)/a$,
the signatures of $r_\pm(\theta)$ is the same as that of $b(\theta)$ if $r_\pm$ are real
so that
$r_{\pm}$ are positive on $(0,\theta_*]$ and
$r_{\pm}$ are not real for $\theta\in(\theta_*,\varphi)$.
Since $r_+(\theta_*)=r_-(\theta_*)$ by the fact $D(\theta_*)=0$,
the curves 
$r_+(\theta)$, $\theta\in (0, \theta_*]$
followed by 
$r_-(\theta_*-\theta)$,  $\theta\in (0, \theta_*]$, 
form a continuous curve going from $\alpha_2$ to $\alpha_1$ in the upper half-plane.
Let $\alpha_i$, $i=1,2$ be the solutions of \eqref{eq:quad}.
Since $\alpha_1\alpha_2=\frac1\gamma<0$ and $\alpha_1\le \alpha_2$,
we have $\alpha_1<0<\alpha_2$ so that
\[
\Omega=\set{z\in\domainF}{|\theta|<\theta_*\text{ and } r_-(\theta)<r<r_+(\theta)}.
\]
Since $q(-\frac1\gamma)>0$ and $q(-\kappa)<0$ by \eqref{eq:q special values}, 
where $q$ is defined in \eqref{def:b zero},
we see that $-\frac1\gamma\in\overline{\Omega}$ and $-\kappa\not\in\overline{\Omega}$.

%%%%%%%%%%%%%%%%%%%%%%%%%%%%%%%%%%%%%%%%%%%%%%%%%%%%%%%%%%%%%%%%%%%%%%%%%%%%%%%%%%%%%%%%%%%%%%%%%%%%%%%%%%%%%
% END OF THE CASE (1) $a<0$ AND START OF THE CASE (2)  %%%%%%%%%%%%%%%%%%%%%%%%%%%%%%%%%%%%%%%%%%%%%%%%%%%%%%%%%%%%%%%%%%%%%%%%%%%%%%%%%%%%%%%%%%%%%%%%%%%%%%%%%%%%%

\noindent
(2) Assume that $a=0$.
In this case,
the equation \eqref{eq:r} reduces to $b(\theta)r-1=0$ so that 
\[r_\pm(\theta)=r(\theta)=\frac{1}{b(\theta)}=\frac{1}{\cos\theta+\sin\theta\cot(\kappa\theta)}=\frac{\sin(\kappa\theta)}{\sin((\kappa+1)\theta)}.\]
Let $\theta_1=\frac{\pi}{\kappa+1}$.
Since $\sin(\kappa\theta)$ and $\sin((\kappa+1)\theta)$ are both positive in the interval $(0,\theta_1)$,
and since $\ds\lim_{\theta\to\theta_1-0}\sin((\kappa+1)\theta)=0$,
we see that
\[
\ds\lim_{\theta\to\theta_1-0}r(\theta)=+\infty.
\]
Thus, 
it has an asymptotic line with slope $\tan\theta_1$, which is determined later.
Since $\gamma=a/\kappa=0$,
the solutions $\alpha_i$, $i=1,2$ are given as $\alpha_1=\alpha_2=-\frac{\kappa}{\kappa+1}$.
Since $q(0)=1>0$ and $q(-\kappa)=-\kappa<0$,
we see that the domain $\Omega$ is given as
\[
\Omega=\set{z=x+yi\in\domainF}{|\theta|<\theta_1,\ r>r(\theta)}
\]
and $-\kappa\not\in\overline{\Omega}$.

%%%%%%%%%%%%%%%%%%%%%%%%%%%%%%%%%%%%%%%%%%%%%%%%%%%%%%%%%%%%%%%%%%%%%%%%%%%%%%%%%%%%%%%%%%%%%%%%%%%%%%%%%%%%%
% END OF THE CASE (2) AND START OF THE CASE (3) %%%%%%%%%%%%%%%%%%%%%%%%%%%%%%%%%%%%%%%%%%%%%%%%%%%%%%%%%%%%%%%%%%%%%%%%%%%%%%%%%%%%%%%%%%%%%%%%%%%%%%%%%%%%%

\noindent
(3) Assume that $0<a<1$.
In this case,
we have
\[
D(\theta)=b(\theta)^2+4a(1-a)>0
\]
for any $\theta\in I_0$
so that the solutions of \eqref{eq:r} are always real.
Moreover,
since $r_+(\theta)\cdot r_-(\theta)=-\frac{1-a}{a}<0$,
they have the different signatures,
and 
$r_+(\theta)$ is the positive real solution of \eqref{eq:r} by 
the fact $|b(\theta)|<\sqrt{D(\theta)}$.
In particular, $r_+(\theta)$, $\theta\in I_0$ forms a continuous curve in $\C^+$.
The solutions $\alpha_i$, $i=1,2$ of \eqref{eq:quad} are both negative
because we have $\gamma=a/\kappa>0$ and $\alpha_1+\alpha_2=-(1+\frac1\kappa)/\gamma<0$.
Since $q(-\frac1\gamma)<0$ and $q(-\kappa)<0$ by \eqref{eq:q special values}
and since $\frac1\gamma>\kappa$, 
we see that 
\begin{equation}\label{eq:prop22-1}
\alpha_1<-\frac1\gamma<-\kappa<\alpha_2<0.
\end{equation}

\noindent(a)
We first assume that $\kappa\ge 1$.
Then, $b(\theta)$ is defined on the interval $I_0=(0,\theta_0)$,
and we have
$\lim\limits_{\theta\to\theta_0-0}b(\theta)=-\infty$,
which implies 
\[\lim_{\theta\to\theta_0-0}r_+(\theta)=+\infty.\]
Therefore,
the curve $r_+(\theta)$, $\theta\in(0,\theta_0)$ has the asymptotic line with gradient $\tan\theta_0$,
which is determined later.
By \eqref{eq:prop22-1},
we see that $\Omega$ is given as
\[\Omega=\set{z\in\domainF}{|\theta|<\theta_0,\ r>r_+(\theta)},\]
and $\alpha_1,-\frac1\gamma,-\kappa\not\in\overline{\Omega}$.

\noindent(b)
Next, we assume that $0<\kappa<1$.
Then, $b(\theta)$ is defined for any $\theta\in I_0=(0,\pi)$.
If $\theta=0$, then the positive solution of \eqref{eq:r} corresponds to $\alpha_2<0$,
and if $\theta=\pi$ then, 
since $b(\pi)=2a-1$, 
the positive solution of \eqref{eq:r} corresponds to $-\frac1\gamma$.
Thus,
the curve $r_+(\theta)$, $\theta\in(0,\pi)$ connects $z=\alpha_2$ and $z=-\frac1\gamma$ passing in the upper half plane.
This means that $z=-\kappa$ is in the interior of the curve,
whereas $z=0$ is in its outside.
Since $\Omega$ is the connected component including $z=0$,
$\Omega$ is the outside of the curve and hence
\[
\Omega=\set{z\in\domainF}{|\theta|<\pi,\ r>r_+(\theta)}.
\]

%%%%%%%%%%%%%%%%%%%%%%%%%%%%%%%%%%%%%%%%%%%%%%%%%%%%%%%%%%%%%%%%%%%%%%%%%%%%%%%%%%%%%%%%%%%%%%%%%%%%%%%%%%%%%
% END OF THE CASE (3) AND START OF THE CASE (4) %%%%%%%%%%%%%%%%%%%%%%%%%%%%%%%%%%%%%%%%%%%%%%%%%%%%%%%%%%%%%%%%%%%%%%%%%%%%%%%%%%%%%%%%%%%%%%%%%%%%%%%%%%%%%

\noindent
(4) Assume that $a=1$ and $\kappa\ne 1$.
In this case, the equation \eqref{eq:r} reduces to $r^2+b(\theta)r=0$, whose solutions are
$r(\theta)=0$, $-b(\theta)$.
Recall that $I_0=(0,\min(\pi,\frac\pi\kappa))$.
By Lemmas~\ref{lemma:signature b} and \ref{lemma:func_F},
we see that if $0<\kappa<1$ then $b(\theta)>0$ and if $\kappa>1$ then $b(\theta)<0$ for any $\theta\in I_0$.
This means that if $0<\kappa<1$ then the equation \eqref{eq:r} does not have a positive solution,
and hence we have 
\[
\Omega=\set{z\in\domainF}{|\theta|<\pi,\ r>0}=\domainF,
\]
which shows the assertion (4)-(a).
If $\kappa>1$, then since $b'(\theta)<0$ for $\theta\in I_0$ and 
since $b(0)=1+\frac1\kappa-2<0$ and $\lim_{\theta\to\theta_0-0}b(\theta)=-\infty$,
the function $D(\theta)$ is monotonic increasing on $I_0$ and hence
we have 
\[
\lim_{\theta\to\theta_0-0}r_+(\theta)=+\infty.
\]
Thus, the curve $r_+(\theta)$, $\theta\in I_0$ has an asymptotic line with gradient $\tan\theta_0$,
which is determined later.
Thus, we have
\[
\Omega=\set{z\in\domainF}{|\theta|<\theta_0\text{ and } r>r_+(\theta)}.
\]

%%%%%%%%%%%%%%%%%%%%%%%%%%%%%%%%%%%%%%%%%%%%%%%%%%%%%%%%%%%%%%%%%%%%%%%%%%%%%%%%%%%%%%%%%%%%%%%%%%%%%%%%%%%%%
% END OF THE CASE (4) AND START OF THE CASE (5) %%%%%%%%%%%%%%%%%%%%%%%%%%%%%%%%%%%%%%%%%%%%%%%%%%%%%%%%%%%%%%%%%%%%%%%%%%%%%%%%%%%%%%%%%%%%%%%%%%%%%%%%%%%%%

\noindent
(5) Suppose that $a>1$.
In this case, we have 
$b(0)=1+\frac1\kappa-2a$ and
\[
b\Bigl(\frac\pi\kappa\Bigr)=(1-2a)\cos\frac\pi\kappa\quad(\text{if }\kappa>1),
\quad
b(\pi)=2a-1>0\quad(0<\kappa<1).
\]
Note that $b(0)>0$ if $\kappa>1$.
Since $r_+(\theta)\cdot r_-(\theta)=\frac{a-1}{a}>0$,
two solutions $r_\pm(\theta)$ of \eqref{eq:r} have the same signature if $r_\pm(\theta)$ are real.

\noindent(a)
We first consider the case $\kappa>1$ and $D(0)\ge 0$.
Let us show that $D(\theta)>0$ for $\theta\in I_0$.
Set
\[
K(x):=\frac{x}{4}\Bigl(1+\frac1x\Bigr)^2,\quad
G(x)=\frac{2x^2+3x+1}{6x}\quad(x>0).
\]
Then, the condition $D(0)\ge 0$ is equivalent to $a\le K(\kappa)$,
and hence we have $a\le G(\kappa)$ because
\[
G(x)-K(x)
=
\frac{x^2-1}{12x}>0\quad\text{if}\quad x>1.
\]
By the assumption $\kappa>1$,
we see that $b'(\theta)<0$ for any $\theta\in I_0$ by Lemma \ref{lemma:table of b prime}
so that
$b$ is monotonic decreasing in this interval.
Since $b(0)<0$, the function $b$ is negative in $I_0$,
and hence
$D'(\theta)=2b(\theta)b'(\theta)>0$ so that $D(\theta)$ is monotonic increasing in the interval $I_0$,
and in particular,
it is positive on $I_0$.
By \eqref{eq:r prime},
we have $r'_+(\theta)>0$,
and hence the function $r_+(\theta)$ is monotonic increasing,
whereas $r_-(\theta)$ is monotonic decreasing because $r_-(\theta)=\frac{a-1}{ar_+(\theta)}$.
Note that
\[
\lim_{\theta\to\theta_0-0}r_+(\theta)=+\infty,\quad
\lim_{\theta\to\theta_0-0}r_0(\theta)=0.
\]
Thus, $r_+(\theta)$, $\theta\in I_0$ draws an bounded curve connecting $z=\alpha_2$ to $\infty$,
and $r_-(\theta)$, $\theta \in I_0$ draws a bounded curve connecting $z=\alpha_1$ to $z=-\kappa$.
Since we have $-\kappa<-\frac1\gamma<\alpha_1<\alpha_2<0$
and since $\Omega$ is the connected component including $z=0$,
we have
\[
\Omega=\set{z\in\domainF}{|\theta|<\theta_0,\ r>r_+(\theta)}.
\]

%%%%%%%%%%%%%%%%%%%%            THE CASE (5)-(b)               %%%%%%%%%%%%%%%%%%%%%%%%%%%%%%

\noindent
(b) Next, we assume that $\kappa>1$ and $D(0)<0$.
According to Lemma \ref{lemma:func_b prime} (1),
we consider the function $b'(\theta)$ in two cases,
that is,
(i) $a\le G(\kappa)$ and (ii) $a> G(\kappa)$.

(i) Assume that $a\le G(\kappa)$.
Then, $b(\theta)$ is monotonic decreasing.
Since $b(0)<0$, we see that $b(\theta)<0$ for any $\theta\in I_0$
and therefore 
$D'(\theta)=2b(\theta)b'(\theta)>0$ for any $\theta\in I_0$.
Thus, $D(\theta)$ is monotonic increasing with $D(0)<0$.
In particular, there exists a unique $\theta_*$ such that $D(\theta_*)=0$,
and $r_\pm(\theta)$ are real for $\theta\in(\theta_*,\theta_0)$.
In this interval,
since $r_+(\theta)+r_-(\theta)=-b(\theta)/a>0$,
we see that $r_\pm(\theta)$ are both positive.
By \eqref{eq:r prime}, we have $r'_+(\theta)>0$ and thus
the function $r_+(\theta)$ is monotonic increasing,
whereas $r_-(\theta)$ is monotonic decreasing because $r_-(\theta)=\frac{a-1}{ar_+(\theta)}$.
By taking a limit $\theta\to\theta_0-0$, we have
\[
\lim_{\theta\to\theta_0-0}r_+(\theta)=+\infty,\quad
\lim_{\theta\to\theta_0-0}r_-(\theta)=0.
\]
This means that $r_+(\theta)$ draws an unbounded curve connecting $z=\alpha_1$ and $z=\infty$,
and $r_-(\theta)$ draws a bounded curve connecting $z=\alpha_1$ and $z=-\kappa$,
where $\alpha_1$ is the complex solution of \eqref{eq:quad} with positive imaginary part.
Since we have  $-\kappa<-\frac1\gamma<0$, 
the domain $\Omega$ is given as
\[
\Omega=\set{z\in\domainF}{|\theta|<\theta_0,\ \text{if }|\theta|\ge\theta_*\text{ then }0<r<r_-(\theta)\text{ or }r>r_+(\theta)}.
\]

(ii) Assume that $a>G(\kappa)$.
Then, there are two possibilities on $b(\varphi_*)$.
If $b(\varphi_*)\le 0$, then we have
\[
\begin{array}{*{5}{c|}c}
\theta&0&\cdots&\varphi_*&\cdots&\frac\pi\kappa\\ \hline
b'(\theta)&&+&0&-&\\ \hline
b(\theta)&-&-&&-&\\ \hline \hline
D'(\theta)&&-&0&+&\\ \hline
D(\theta)&D(0)<0&\searrow&&\nearrow^{+\infty}&\times
\end{array}
\]
and if $b(\varphi_*)>0$,
then there exist exactly two $\varphi_1<\varphi_2$ such that $b(\varphi_i)=0$ so that
\[
\begin{array}{*{9}{c|}c}
\theta&0&\cdots&\varphi_1&\cdots&\varphi_*&\cdots&\varphi_2&\cdots&\frac\pi\kappa\\ \hline
b'(\theta)&&\multicolumn{3}{c|}{+}&0&\multicolumn{3}{c|}{-}&\\ \hline
b(\theta)&-&-&0&\multicolumn{3}{c|}{+}&0&-&\\ \hline \hline
D'(\theta)&&-&0&+&0&-&0&+\\ \hline
D(\theta)&D(0)<0&\searrow&&\nearrow&&\searrow&&\nearrow^{+\infty}&\times
\end{array}
\]
We note that $D(\varphi_i)=b(\varphi_i)^2-4a(a-1)<0$.
Since $r_+(\theta)+r_-(\theta)=-b(\theta)/a$,
if $D(\varphi_*)>0$, then $r_\pm(\theta)$ are both negative so that we do not deal with this case.
Thus, in both cases, there exists a unique $\theta_*\in I_0$ such that
$D(\theta_*)=0$ and $r_\pm(\theta)>0$ for any $\theta\in(\theta_*,\theta_0)$.
By \eqref{eq:r prime},
we see that $r_+(\theta)$ is monotonic increasing on $(\theta_*,\theta_0)$,
whereas $r_-(\theta)$ is monotonic decreasing.
Moreover,
we have
\[
\lim_{\theta\to\theta_0-0}r_+(\theta)=+\infty,\quad
\lim_{\theta\to\theta_0-0}r_-(\theta)=0,
\]
and hence the curves $r_\pm(\theta)$, $\theta\in(\theta_*,\theta_0)$ form an unbounded curve connecting $z=\alpha_1$ and $z=\infty$.
Since $-\kappa<-\frac1\gamma<0$, we have
\[
\Omega=\set{z\in\domainF}{|\theta|<\theta_0,\ \text{and if }|\theta|\ge\theta_*\text{ then }0<r<r_-(\theta)\text{ or }r>r_+(\theta)}.
\]

%%%%%%%%%%%%%%%%%%%%%%%%%%          THE CASE (5)-(c)           %%%%%%%%%%%%%%%%%%%%%%%%%%

\noindent
(c) We finally assume that $0<\kappa<1$.
In this case, we have
$D(\pi)=(2a-1)^2-4a(a-1)=1$.
We note that $b(0)<0$ implies $D(0)<0$.
In fact, $b(0)<0$ means $1+\frac1\kappa<2a$ so that 
\begin{equation}\label{ineq:prop2-2}
D(0)=\Bigl(1+\frac1\kappa\Bigr)^2-\frac{4a}{\kappa}
<
2a\Bigl(1+\frac1\kappa\Bigr)-\frac{4a}{\kappa}
=
2a\Bigl(1-\frac1\kappa\Bigr)<0.
\end{equation}
Since $a>1$, the signatures of $r_\pm(\theta)$ are the same,
and they are the opposite to the signature of $b(\theta)$.

Let $I_D\subset I_0$ be the maximal interval such that $D$ is positive on $I_D$.
We shall show that there are no suitable solutions of \eqref{eq:r}, that is,
$r_\pm(\theta)<0$ for any $\theta\in I_D$,
which yields that $\Omega=\domainF$.
Let us recall Lemma \ref{lemma:func_b prime}.

(i)
Assume that $0<\kappa<\frac12$ and $a>G(\kappa)$.
In this case, we have 
\[
\begin{array}{*{5}{c|}c}
\theta&0&\cdots&\varphi_*&\cdots&\pi\\ \hline
b'(\theta)&&+&0&-&\\ \hline
b(\theta)&b(0)&\nearrow&&\searrow&b(\pi)>0
\end{array}
\]
If $b(0)\ge 0$, then we see that $b(\theta)>0$ for any $\theta\in I_0$,
which implies $r_\pm(\theta)<0$ for any $\theta\in I_D$.
If $b(0)<0$, then there exists a unique $0<\varphi<\varphi_*$ such that $b(\varphi)=0$,
and we have $D(0)<0$ by \eqref{ineq:prop2-2}.
Hence,
\[
\begin{array}{*{8}{c|}c}
\theta&0&\cdots&\varphi&\cdots&\varphi_*&\cdots&\pi\\ \hline
b'(\theta)&&+&+&+&0&-&\\ \hline
b(\theta)&-&\nearrow&0&\nearrow&+&\searrow&+\\ \hline\hline
D'(\theta)&&-&0&+&0&-&\\ \hline
D(\theta)&D(0)<0&\searrow&-&\nearrow&+&\searrow&1
\end{array}
\]
This table yields that $b$ is negative on $I_D$, whence $r_\pm(\theta)<0$ for any $\theta\in I_D$.

(ii) Assume that $0<\kappa<\frac12$ and $a\le G(\kappa)$.
Then, Lemma~\ref{lemma:signature b} tells us that $b'(\theta)<0$ for any $\theta\in I_0$.
Thus, 
$b(\theta)$ is monotonic decreasing on the interval $I_0$ with $b(\pi)>0$, 
and hence $b(\theta)>0$ for any $\theta\in I_0$.
This means that $D'(\theta)<0$ and $D(\theta)$ is monotonic decreasing on $I_0$.
Since $D(\pi)=1$, we see that $I_D=I_0$ and hence $r_\pm(\theta)<0$ for any $\theta\in I_D$.

(iii) Assume that $\frac12\le \kappa<1$.
In this case,
we have we have $b'(\theta)>0$ for any $\theta\in I_0$ so that $b(\theta)$ is monotonic increasing.
In fact,
if $\kappa>\frac12$,
then since $\frac{(1+\sqrt{2})^2}{6}<G(\kappa)<1$ for $\frac12<\kappa<1$
and since $a>1$,
we always have $a>G(\kappa)$ so that $b'(\theta)>0$ by Lemma~\ref{lemma:signature b},
and if $\kappa=\frac12$ then we have $b'(\theta)2(a-1)\sin\theta$ so that $b'(\theta)>0$ b y\eqref{eq:b prime k12}.
If $b(0)\ge 0$, then we have $b(\theta)\ge 0$ for any $\theta\in\theta$ and hence
we see that $r_\pm(\theta)<0$ for $\theta\in I_D$.
If $b(0)<0$ then there exists a unique $\varphi$ such that $b(\varphi)=0$ and we have
\[
\begin{array}{*{5}{c|}c}
\theta&0&\cdots&\varphi&\cdots&\pi\\ \hline
b'(\theta)&&+&+&+&\\ \hline
b(\theta)&-&\nearrow&0&\nearrow&+\\ \hline \hline
D'(\theta)&&-&0&+&\\ \hline
D(\theta)&-&\searrow&-&\nearrow&1
\end{array}
\]
This table indicates that for $\theta\in I_D$ we have $b(\theta)>0$, which implies $r_\pm(\theta)<0$.

We shall determine an asymptotic line with respect to $\Omega$ when $r_+(\theta)\to+\infty$ as $\theta\to\frac{\pi}{\kappa}$  or $\frac{\pi}{\kappa+1}$.
To calculate them in a one scheme, we set $\vartheta=\frac{\pi}{\kappa}$ or $\frac{\pi}{\kappa+1}$,
and denote its denominator by $\alpha$.
A line having gradient $\tan\vartheta$
can be written as $x\sin\vartheta-y\cos\vartheta=A$ with some constant $A$.
Since $x=\kappa(r(\theta)\cos\theta-1)$ and $y=\kappa r(\theta)\sin\theta$, we have
\[
\begin{array}{r@{\ }c@{\ }l}
x\sin\vartheta-y\cos\vartheta
&=&
\kappa\bigl\{\sin\vartheta(r(\theta)\cos\theta-1)-\cos\vartheta r(\theta)\sin\theta\bigr\}\\
&=&
\kappa\bigl\{r(\theta)(\cos\theta\sin\vartheta-\sin\theta\cos\vartheta)-\sin\vartheta\bigr\}\\
&=&
-\kappa\bigl\{r(\theta)\sin(\theta-\vartheta)+\sin\vartheta\bigr\}.
\end{array}
\]
When $\theta\to\vartheta$, we have $r_+(\theta)\to+\infty$ and $b(\theta)\to-\infty$, and
\[
r_+(\theta)=\frac{-b(\theta)+\sqrt{b(\theta)^2-4a(a-1)}}{2a}=-\frac{b(\theta)}{2a}\left(1+\sqrt{1-\frac{4a(a-1)}{b(\theta)^2}}\right).
\]
Since
\[
\lim_{\theta\to\vartheta-0}\frac{\sin(\theta-\vartheta)}{\sin(\alpha\vartheta)}
=
\lim_{\theta\to\vartheta-0}-\frac{\sin(\theta-\vartheta)}{\sin(\alpha(\theta-\vartheta))}
=
\lim_{\theta\to\vartheta-0}-\frac{\theta-\vartheta}{\alpha(\theta-\vartheta)}
=
-\frac1\alpha,
\]
we have
\[
\lim_{\theta\to\vartheta-0}
b(\theta)\sin(\theta-\vartheta)
=
\lim_{\theta\to\vartheta-0}
\Bigl(
(1-2a)\cos\theta\sin(\theta-\vartheta)+\sin\theta\cos(\alpha\theta)\frac{\sin(\theta-\vartheta)}{\sin(\alpha\theta)}\Bigr)
=
\frac{\sin\vartheta}{a\alpha},
\]
whence
\[
\lim_{\theta\to\vartheta-0}
r(\theta)\sin(\theta-\vartheta)
=
\lim_{\theta\to\vartheta-0}
-\frac{b(\theta)\sin(\theta-\vartheta)}{2a}\left(1+\sqrt{1-\frac{4a(a-1)}{b(\theta)^2}}\right)
=
-\frac{\sin\vartheta}{a\alpha}.
\]
Thus, we have
\[
A=\begin{cases}
-\kappa(-\frac{\sin\theta_0}{a\kappa}+\sin\theta_0)=\Bigl(\frac1a-\kappa\Bigr)\sin\theta_0
&
(\text{if }\alpha=\kappa),\\
-\kappa(-\frac{\sin\theta_1}{\kappa+1}+\sin\theta_1)
=
-\frac{\kappa^2}{\kappa+1}\sin\theta_1
&
(\text{if }\alpha=\kappa+1),
\end{cases}
\]
and therefore the proof is now complete.
\end{proof}

\section{The domain $\Omega$ for $\kappa=+\infty$}

In this section, we deal with the case $\kappa=\infty$.
Since $\kappa\theta(x,y)$ is regarded as $y$ in this case,
the equation \eqref{im} can be written as
\begin{equation}\label{eq:im k infty}
F(x,y)=x+\gamma x^2+\gamma y^2+y\cot y=0.
\end{equation}

Assume that $\gamma=0$, that is, the case of the classical Lambert function.
Then, the above equation reduces to $F(x,y)=x+y\cot y=0$. 
Since the point $z=x+yi=0$ satisfies $F(0,0)=1>0$,
we have $\Omega=\set{z=x+yi\in\C}{x>-y\cot y,\ |y|<\pi}$.

Assume that $\gamma\ne 0$. 
Then, the equation \eqref{eq:im k infty} can be calculated as
\begin{equation}\label{eq:xy}
x^2+\frac{x}{\gamma}+y^2+\frac{y\cot y}{\gamma}=0
\iff 
\Bigl(x+\frac{1}{2\gamma}\Bigr)^2=\frac{1}{4\gamma^2}-y^2-\frac{y\cot y}{\gamma}.
\end{equation}
Let us consider the function
\[
\begin{array}{r@{\ }c@{\ }l}
\ds h(y)&:=&\ds \frac{1}{4\gamma^2}-y^2-\frac{y\cot y}{\gamma}
=
\frac{1}{4\gamma^2}-\Bigl(y+\frac{\cot y}{2\gamma}\Bigr)^2+\frac{\cot^2y}{4\gamma^2}
=
\frac{1}{4\gamma^2\sin^2y}-\Bigl(y+\frac{\cot y}{2\gamma}\Bigr)^2\\
&=&
\ds
\frac{1-\bigl(2\gamma y\sin y+\cos y\bigr)^2}{4\gamma^2\sin^2y}.
\end{array}
\]
Note that
\[h(0)=\lim_{y\to0}h(y)=\frac{1}{4\gamma^2}-\frac{1}{\gamma}\lim_{y\to0}\frac{y}{\sin y}=
\frac{1-4\gamma}{4\gamma^2}\quad\text{and}\quad
\lim_{y\to\pi-0}|h(y)|=+\infty.
\]
In order that the equation~\eqref{eq:im k infty} has a real solution in $x$ and $y$, 
the function $h(y)$ needs to be non-negative,
and it is equivalent to the condition that the absolute value of the function $g(y):=\cos y+2\gamma y\sin y$ is less than or equal to $1$.
At first, 
we observe that
$g(0)=1$ and $g(\pi)=-1$, and its derivative is
\[
g'(y)=-\sin y+2\gamma(\sin y+y\cos y)=-(1-2\gamma)\sin y+2\gamma y\cos y
=
(2\gamma-1)\Bigl(\frac{2\gamma}{2\gamma-1}y+\tan y\Bigr)\cos y.
\]
Set $c_\gamma=\frac{2\gamma}{2\gamma-1}$.
Then, 
the signature of $g'$ can be determined by the signatures of $2\gamma-1$, $\cos y$ and $c_\gamma y+\tan y$.

Assume that $\gamma>\frac14$.
If $\gamma>\frac12$, then we have $c_\gamma>1$ and hence there exists a unique $y_*\in(\frac\pi2,\pi)$ such that $c_\gamma y+\tan y=0$ and we have
\[
\begin{array}{*{7}{c|}c}
y&0&\cdots&\frac\pi2&\cdots&y_*&\cdots&\pi\\ \hline
g'&&+&&+&0&-&\\ \hline
g&1&\nearrow&&\nearrow&&\searrow&-1
\end{array}
\]
If $\frac14<\gamma<\frac12$,
then we have $c_\gamma<-1$ and hence there exists a unique $y_*\in(0,\frac\pi2)$ such that $c_\gamma y_*+\tan y_*=0$ and we have
\[
\begin{array}{*{7}{c|}c}
y&0&\cdots&y_*&\cdots&\frac\pi2&\cdots&\pi\\ \hline
g'&&+&&-&&-&\\ \hline
g&1&\nearrow&&\searrow&&\searrow&-1
\end{array}\]
If $\gamma=\frac12$, then we have $g'(y)=y\cos y$ so that $g$ is monotonic decreasing on the interval $(\frac\pi2,\pi)$ with $g(\frac\pi2)>1$, and in this case we set $y_*=\frac\pi2$.
These observation shows that, if $\gamma>\frac14$, then there exists one and only one $y_0\in(y_*,\pi)$ such that
$g(y_0)=1$ and $g(y_0-\varepsilon)>1$ for $\varepsilon\in(0,y_0-y_*)$.
In this case,
$h(y)$ is non-negative in the interval $[y_0,\pi)$, and $h(h_0-\varepsilon)<0$ for $\varepsilon\in(0,y_0-y_*)$.
Let $x_i(y)$, $i=1,2$ be the real solutions of the equation~\eqref{eq:xy} with $x_1(y)\le x_2(y)$.
Then, since we have $x_1(y_0)=x_2(y_0)$ and 
\[
\lim_{y\to\pi-0}x_1(y)=-\infty,\quad
\lim_{y\to\pi-0}x_2(y)=+\infty,
\]
the curves $x_i(y)$, $y\in(y_0,\pi)$ form a connected curve, and hence we have
\[
\Omega=\set{z=x+yi\in\C}{|y|<\pi\text{ and if }|y|\ge y_0\text{ then }x<x_1(y)\text{ or }x>x_2(y)}.
\]

Assume that $0<\gamma\le \frac14$.
Then, we have $-1\le c_\gamma<1$ and hence there are no $y\in(0,\pi)$ such that $c_\gamma y+\tan y=0$.
Thus, we obtain $g'(y)<0$ for any $y\in(0,\pi)$ so that $g$ is monotonic decreasing from $g(0)=1$ to $g(\pi)=-1$.
This shows that $h(y)$ is non-negative in the interval $(0,\pi)$.
Let $x_i(y)$, $i=1,2$ be the solutions of the equation~\eqref{eq:xy} with $x_1(y)\le x_2(y)$.
Then, since we have
\[
\lim_{y\to\pi-0}x_1(y)=-\infty,\quad
\lim_{y\to\pi-0}x_2(y)=+\infty
\] 
and $x_1(0)<x_2(0)<0$,
we have
\[
\Omega=\set{z=x+yi\in\C}{|y|<\pi\text{ and }x>x_2(y)}.
\]

Assume that $\gamma<0$.
Then, there exists a unique $y_*\in(\frac\pi2,\pi)$ such that $c_\gamma y_*+\tan y_*=0$, 
and we have
\[
\begin{array}{*{7}{c|}c}
y&0&\cdots&\frac\pi2&\cdots&y_*&\cdots&\pi\\ \hline
g'&&-&&-&0&+&\\ \hline
g&1&\searrow&&\searrow&&\nearrow&-1
\end{array}
\]
This observation shows that
there exists one and only one $y_0\in(0,y_*)$ such that $g(y_0)=-1$ and $g(y_0+\varepsilon)<-1$ for $\varepsilon\in(0,y_*-y_0)$.
Thus, $h(y)$ is non-negative on $y\in[0,y_0]$, and $h(y_0+\varepsilon)<0$ for $\varepsilon\in(0,y_*-y_0)$.
Let $x_i(y)$, $i=1,2$ be the solutions of the equation~\eqref{eq:xy} with $x_1(y)\le x_2(y)$.
Then, since $x_1(0)<0<x_2(0)$ and $x_1(y_0)=x_2(y_0)$,
these two paths $(x_\pm(y),y)$ form a continuous curve connecting $x_+(0)$ and $x_-(0)$
and we have 
\[
\Omega=\set{z=x+yi\in\C}{|y|<y_0\text{ and }x_1(y)<x<x_2(y)}.
\]

We summarize these calculations as a proposition.

\begin{Proposition}
\label{prop:Omega k infty}
Assume that $\kappa=+\infty$ and let $x_i(y)$, $i=1,2$ be solutions of $\eqref{eq:xy}$ with $x_1(y)\le x_2(y)$ if they are real.
\begin{enumerate}[\quad\rm(1)]
    \item If $\gamma=0$, then one has
    $\Omega=\set{z=x+yi}{|y|<\pi\text{ and }x>-y\cot y}$.
    \item Suppose that $\gamma>\frac14$.
    In this case, there exists a unique $y_0\in(0,\pi)$ such that $x_1(y_0)=x_2(y_0)$ and if $y\ge y_0$ then $x_i(y)$, $i=1,2$ are real.
    Moreover, one has
    \[
    \Omega=\set{z=x+yi\in\C}{|y|<\pi\text{ and if }|y|\ge y_0\text{ then }x<x_1(y)\text{ or }x>x_2(y)}.
    \]
    \item If $0<\gamma\le \frac14$, then $x_i(y)$, $i=1,2$ are both real for any $y\in(0,\pi)$, 
    and one has
    \[
    \Omega=\set{z=x+yi\in\C}{|y|<\pi\text{ and }x>x_2(y)}.
    \]
    \item Suppose that $\gamma<0$.
    Then, there exists a unique $y_0\in(0,\pi)$ such that $x_1(y_0)=x_2(y_0)$ and 
    if $y\le y_0$ then $x_i(y)$, $i=1,2$ are real for $\eqref{eq:xy}$.
    Moreover, one has
    \[
    \Omega=\set{z=x+yi\in\C}{|y|<y_0\text{ and }x_1(y)<x<x_2(y)}.
    \]
    In particular, $\Omega$ is bounded.
\end{enumerate}
\end{Proposition}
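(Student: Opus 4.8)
The plan is to reduce the real-preimage condition \eqref{eq:im k infty} to a single one-variable sign analysis, and then glue the resulting level curves into the connected component of $z=0$. First I would dispose of $\gamma=0$ directly: the equation $F(x,y)=x+y\cot y=0$ defines the graph $x=-y\cot y$ on $|y|<\pi$, and since $F(0,0)=1>0$ the origin lies to the right of this graph, which gives case (1) at once.

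For $\gamma\ne0$ I would complete the square in \eqref{eq:xy} to write $(x+\tfrac1{2\gamma})^2=h(y)$, so that \eqref{eq:im k infty} has a real solution $x$ for a given $y$ exactly when $h(y)\ge0$. Using the factorization $h(y)=\frac{1-g(y)^2}{4\gamma^2\sin^2 y}$ with $g(y)=\cos y+2\gamma y\sin y$, this is equivalent to $|g(y)|\le1$, so the whole problem collapses to locating the sublevel set $\{|g|\le1\}$ on $(0,\pi)$. The boundary data $g(0)=1$ and $g(\pi)=-1$ are the crucial anchors.

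Next I would carry out the monotonicity analysis of $g$ through $g'(y)=(2\gamma-1)\bigl(c_\gamma y+\tan y\bigr)\cos y$ with $c_\gamma=\tfrac{2\gamma}{2\gamma-1}$, splitting into the three regimes $\gamma>\tfrac14$, $0<\gamma\le\tfrac14$, and $\gamma<0$. In each regime the sign of $c_\gamma y+\tan y$ on $(0,\pi)$ decides whether $g$ is monotone or has a single interior extremum, and hence whether $\{|g|\le1\}$ is all of $(0,\pi)$ (case (3)), a subinterval $[y_0,\pi)$ (case (2)), or a subinterval $[0,y_0]$ (case (4)). The existence and uniqueness of the transition value $y_0$ with $g(y_0)=\pm1$ follows from the strict monotonicity of $g$ beyond its extremum, together with the anchor values.

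Finally, to identify $\Omega$ itself I would combine the two branches $x_1(y)\le x_2(y)$ with the data that fix the component: the asymptotics $x_1(y)\to-\infty$ and $x_2(y)\to+\infty$ as $y\to\pi^-$, forcing the branches to open outward, and the sign $F(0,0)=1>0$, which locates the origin. The main obstacle is precisely this last gluing step: confirming that at the merge point $y_0$ where $x_1(y_0)=x_2(y_0)$ the two branches join into a single continuous curve, and then reading off on which side of that curve the origin sits, so that $\Omega$ is correctly described as the exterior $\{x>x_2(y)\}$ in case (3), the two-sided exterior $\{x<x_1(y)\text{ or }x>x_2(y)\}$ in case (2), or the bounded interior $\{x_1(y)<x<x_2(y)\}$ in case (4). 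Once the topology of the zero set of $F$ is pinned down, each displayed formula for $\Omega$ reads off directly.
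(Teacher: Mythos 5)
Your proposal is correct and follows essentially the same route as the paper: reduce \eqref{eq:im k infty} to $(x+\tfrac1{2\gamma})^2=h(y)$ with $h(y)=\frac{1-g(y)^2}{4\gamma^2\sin^2y}$, analyze $g(y)=\cos y+2\gamma y\sin y$ via the factorization $g'(y)=(2\gamma-1)(c_\gamma y+\tan y)\cos y$ in the same three regimes, and then use the anchors $g(0)=1$, $g(\pi)=-1$, the asymptotics $x_1(y)\to-\infty$, $x_2(y)\to+\infty$ as $y\to\pi^-$, and $F(0,0)=1>0$ to glue the branches and locate the component of the origin. The gluing step you flag as the main obstacle is resolved exactly as you describe, so nothing further is needed.
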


\section{Proof of Theorem \ref{theorem:lambert-tsallis}}

In this section,
we shall show Theorem~\ref{theorem:lambert-tsallis} for $\kappa>0$ or $\kappa=\infty$.
To do so,
we consider the domain $D:=\Omega\cap\C^+$,
and show that $f_{\kappa,\gamma}$ maps $D$ to $\C^+$ bijectively.

The key tool is the argument principle
(see \cite[Theorem 18, p.152]{Ahlfors}, for example).

\begin{Theorem}[{\bf The argument principle.}] 
If $f(z)$ is meromorphic in a domain $\Omega$ with the zeros $a_j$ and the poles
$b_k$, then
\[
\frac{1}{2\pi i}\int_\gamma \frac{f'(z)}{f(z)}\,dz=\sum_jn(\gamma,a_j)-\sum_kn(\gamma,b_k)
\]
for every cycle $\gamma$ which is homologous to zero in $\Omega$ and does not pass through
any of the zeros or poles.
Here,
$n(\gamma,a)$ is the winding number of $\gamma$ with respect to $a$.
\end{Theorem}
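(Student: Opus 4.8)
The plan is to derive the argument principle from the homology version of the residue theorem applied to the logarithmic derivative $f'/f$. Throughout I assume $f\not\equiv 0$, so that the zeros $a_j$ and poles $b_k$ of $f$ form a set that is discrete in $\Omega$; the convention is that each zero or pole is repeated in the list according to its order, so the stated sums carry the multiplicities implicitly. The single identity to establish is that $\frac{1}{2\pi i}\int_\gamma \frac{f'}{f}\,dz$ equals the $n(\gamma,\cdot)$-weighted count of the residues of $f'/f$; once that is in hand, the entire content reduces to identifying those residues.

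First I would record the local behaviour of the logarithmic derivative. If $a$ is a zero of $f$ of order $m$, write $f(z)=(z-a)^m g(z)$ with $g$ holomorphic and non-vanishing near $a$; then $\frac{f'(z)}{f(z)}=\frac{m}{z-a}+\frac{g'(z)}{g(z)}$, and since $g'/g$ is holomorphic near $a$, the function $f'/f$ has a simple pole at $a$ with residue exactly $m$. Symmetrically, if $b$ is a pole of $f$ of order $p$, write $f(z)=(z-b)^{-p}h(z)$ with $h$ holomorphic and non-vanishing near $b$; then $\frac{f'(z)}{f(z)}=\frac{-p}{z-b}+\frac{h'(z)}{h(z)}$, so $f'/f$ has a simple pole at $b$ with residue $-p$. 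Away from the zeros and poles of $f$ the quotient $f'/f$ is holomorphic, so these are precisely the poles of $f'/f$, and its residues there are the signed orders.

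Next I would invoke the general residue theorem: for a cycle $\gamma$ homologous to zero in $\Omega$ and avoiding the poles of $f'/f$, one has $\frac{1}{2\pi i}\int_\gamma \frac{f'}{f}\,dz=\sum_c n(\gamma,c)\,\mathrm{Res}_{z=c}\frac{f'}{f}$, the sum running over the poles $c$ of $f'/f$. Substituting the residues computed above turns the right-hand side into $\sum_j n(\gamma,a_j)-\sum_k n(\gamma,b_k)$, with the orders absorbed into the repetition of the points, which is the asserted formula. The finiteness of the sum is automatic: the winding number $n(\gamma,\cdot)$ is locally constant on $\C\setminus\gamma$ and vanishes on the unbounded component, so $\{w: n(\gamma,w)\ne 0\}$ is bounded; its boundary lies on the trace of $\gamma$, and since $\gamma$ is homologous to zero one has $n(\gamma,w)=0$ for every $w\notin\Omega$, whence the closure $K$ of this set is a compact subset of $\Omega$. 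As the zeros and poles of $f$ are discrete in $\Omega$, only finitely many of them lie in $K$; all others carry winding number zero and contribute nothing.

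The main obstacle is the general residue theorem itself, i.e.\ the homology form of Cauchy's theorem on a domain that need not be simply connected. I would take this as the substantive input (it is exactly what the hypothesis ``$\gamma$ homologous to zero in $\Omega$'' is designed to feed): granting Cauchy's theorem in the form $\int_\gamma \psi\,dz=0$ for every $\psi$ holomorphic on $\Omega$ and every such $\gamma$, the residue theorem follows by subtracting from $f'/f$ its principal parts $\frac{c_{-1}}{z-c}$ at the finitely many relevant poles $c$ in $K$, leaving a function holomorphic on a neighbourhood of the support of $\gamma$ to which Cauchy applies, while each subtracted term integrates to $2\pi i\,n(\gamma,c)\,c_{-1}$ by the very definition of the winding number. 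Everything else—the local expansions and the bookkeeping of multiplicities—is routine.
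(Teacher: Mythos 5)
The paper does not prove this statement at all: it quotes the argument principle verbatim from Ahlfors \cite{Ahlfors} (Theorem 18, p.~152) as a known tool, so there is no internal proof to compare against, and your blind derivation --- showing $f'/f$ has simple poles with residue $+m$ at a zero of order $m$ and $-p$ at a pole of order $p$, establishing finiteness via compactness of the closure of $\set{w\in\C\setminus\gamma}{n(\gamma,w)\ne 0}$ inside $\Omega$, and feeding this into the homology form of the residue theorem --- is precisely the classical proof found in that cited source, and it is correct. The one point of wording to tighten is the final reduction: after subtracting the principal parts at the finitely many relevant poles, the remaining function should be regarded as holomorphic on $\Omega$ with the winding-number-zero poles deleted (a domain in which $\gamma$ is still homologous to zero, since the deleted points satisfy $n(\gamma,\cdot)=0$), rather than merely ``on a neighbourhood of the support of $\gamma$,'' because the homology Cauchy theorem needs a domain relative to which $\gamma$ is null-homologous, not just holomorphy near the trace.
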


We also use the following elementary property of holomorphic functions.

\begin{Lemma}
\label{lemm:implicit}
Let $f(z)=u(x,y)+iv(x,y)$ be a holomorphic function.
The implicit function $v(x,y)=0$ has an intersection point at $z=x+yi$ only if $f'(z)=0$.
\end{Lemma}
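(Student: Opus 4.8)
The plan is to read the statement in contrapositive form and deduce it from the Cauchy--Riemann equations together with the implicit function theorem: if $f'(z_0)\neq 0$ at a point $z_0=x_0+y_0i$ lying on the zero set $\{v=0\}$, then near $z_0$ this set is a single smooth embedded arc, and therefore cannot have a self-intersection there. Since $f$ is holomorphic, $u$ and $v$ are automatically $C^\infty$, so there is no regularity issue in applying the implicit function theorem.

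First I would record the relation between $f'$ and the gradient of $v$. Writing $f=u+iv$ and using the Cauchy--Riemann equations $u_x=v_y$ and $u_y=-v_x$, one gets
\[
f'(z)=u_x+iv_x=v_y+iv_x,
\]
so that $|f'(z)|^2=v_x^2+v_y^2=|\nabla v(x,y)|^2$. In particular, $f'(z_0)=0$ if and only if $\nabla v(x_0,y_0)=0$; this single identity is what carries all the content of the lemma.

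Next I would invoke the implicit function theorem. Assume $z_0$ lies on $\{v=0\}$ and $f'(z_0)\neq 0$. By the identity above, $\nabla v(x_0,y_0)\neq 0$, so at least one of $v_x(x_0,y_0)$, $v_y(x_0,y_0)$ is nonzero; say $v_y(x_0,y_0)\neq 0$. Then there is a neighbourhood of $z_0$ in which $\{v=0\}$ coincides with the graph of a single $C^1$ function $y=\psi(x)$ (and symmetrically $x=\varphi(y)$ if instead $v_x\neq 0$), i.e.\ a smooth embedded arc carrying a well-defined tangent line perpendicular to $\nabla v(x_0,y_0)$. Such an arc passes through $z_0$ exactly once and has no self-intersection there. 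Taking the contrapositive, any intersection (self-crossing) point of $\{v=0\}$ must have $\nabla v=0$, hence $f'(z_0)=0$, which is precisely the claim.

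I do not expect a genuine obstacle here, as the argument is elementary; the only point deserving care is the precise reading of ``intersection point.'' I would interpret it as a self-intersection of the level curve $\{v=0\}=\{\mathrm{Im}\,f=0\}$, that is, a point through which two arcs pass with distinct tangents (equivalently, a singular point of the real-analytic set $\{v=0\}$). The implicit-function argument shows exactly that such behaviour is incompatible with $\nabla v\neq 0$, and holomorphy enters only through the Cauchy--Riemann equations, which convert the non-vanishing of $f'$ into the non-vanishing of $\nabla v$. In the subsequent use of this lemma, it lets one locate the self-crossings of the boundary curves in $f_{\kappa,\gamma}^{-1}(\R)$ among the critical points $\alpha_1,\alpha_2$ (and $-\kappa$), which is why only the zeros of $f'_{\kappa,\gamma}$ can produce the self-intersections seen in Proposition~\ref{prop:Omega}.
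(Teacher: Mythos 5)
Your proposal is correct, and it rests on the same key identity as the paper --- via the Cauchy--Riemann equations, $f'(z_0)=0$ if and only if $\nabla v(x_0,y_0)=0$ --- but the route from there is genuinely different. You invoke the implicit function theorem: if $\nabla v\neq 0$ then $\{v=0\}$ is locally a single embedded graph, hence cannot self-intersect, and the lemma follows by contraposition. The paper instead parametrizes a regular branch $p(t)$ of $\{v=0\}$, observes that $\nabla v\perp p'(t)$ always, and shows that if additionally $\frac{d}{dt}u(p(t))=0$ at some $t_0$ then $\nabla v$ is orthogonal to the two mutually orthogonal nonzero vectors $p'(t_0)$ and $p'(t_0)$ rotated by $\pi/2$, forcing $\nabla v=0$ and hence $f'=0$. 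The paper's computation is really tailored to what is used downstream, namely that $\mathrm{Re}\,f$ is strictly monotone along any regular arc of $f^{-1}(\R)$ avoiding the critical points; the statement about intersection points is a corollary of the same orthogonality argument (two transversal tangent directions at a crossing would likewise annihilate $\nabla v$). Your version addresses the stated lemma more directly and cleanly, but if you intend to use it as the paper does (to conclude monotonicity of $g(t)=f(C(t))$ on the boundary curves), you should note explicitly that $|f'|^2=v_x^2+v_y^2=u_x^2+u_y^2$ also gives $\frac{d}{dt}u(p(t))\neq 0$ on regular arcs where $f'\neq 0$; this is immediate from your setup but is the form of the lemma actually applied in the proof of the main theorem.
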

\begin{proof}
Let $p(t)=(x(t),y(t))$ be a continuous path in $\C\cong\R^2$ satisfying $v\bigl(p(t)\bigr)=0$ for all $t\in[0,1]$.
We {assume that} $(x'(t),y'(t))\ne(0,0)$.
Set
\[
g(t):=u(p(t))=u(x(t),y(t)),\quad
h(t):=v(p(t))=v(x(t),y(t)).
\]
Obviously, we have $h'(t)\equiv0$ for any $t$, and
\[
h'(t)=v_xx'(t)+v_yy'(t)=(v_x,v_y)\cdot(x'(t),y'(t)).
\]
Assume that $g'(t_0)=0$ for some point $t_0\in[0,1]$.
Then
\[
\begin{array}{r@{\ }c@{\ }l}
g'(t)
&=&
\ds
u_xx'(t)+u_yy'(t)=(u_x,u_y)\cdot(x'(t),y'(t))\\
&=&
\ds
(v_x,v_y)\pmat{0&-1\\1&0}\cdot(x'(t),y'(t))
=
(v_x,v_y)\cdot (-y'(t),x'(t)),
\end{array}
\]
{the condition $g'(t_0)=0$ implies that }
the vector $(v_x,v_y)$ is orthogonal both to $(x'({t_0}),y'({t_0}))$ and $(-y'({t_0}),x'({t_0}))$, which are non-zero vectors and mutually orthogonal.
Such vector is only zero vector in $\R^2$, that is, $(v_x,v_y)=(0,0)$,
and hence $(u_x,u_y)=(0,0)$ by {Cauchy-Riemann equations}.
Thus, if $g'(t_0)=0$ then $p(t_0)$ needs to {satisfy} {$f'(p(t_0))=0$}.
\end{proof}

\subsection{The case $\gamma<0$}

Assume that $\gamma<0$.
By Propositions \ref{prop:Omega k=1}, \ref{prop:Omega} and \ref{prop:Omega k infty},
it is equivalent to the condition that $\Omega$ is bounded.
We first consider the set $\mathcal{S}$
(see \eqref{def:mathcal S} for definition).
Let $\alpha_i$, $i=1,2$ be solutions of \eqref{eq:quad}.
Since $\gamma<0$, these are distinct real numbers.
Set $\alpha_1< \alpha_2$.
Then, we have $\Omega\cap\R=(\alpha_1,\alpha_2)$.

\begin{Lemma}
One has $f(\alpha_2)<f(\alpha_1)<0$ and 
$\mathcal{S}=(f_{\kappa,\gamma}(\alpha_2),f_{\kappa,\gamma}(\alpha_1))$.
\end{Lemma}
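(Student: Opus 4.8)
The plan is to reduce the statement to a one–variable analysis of $f_{\kappa,\gamma}$ on the real segment $\Omega\cap\R=(\alpha_1,\alpha_2)$. I would first locate the key points: from $\alpha_1\alpha_2=1/\gamma<0$ we get $\alpha_1<0<\alpha_2$, from $\alpha_1+\alpha_2=(1+\tfrac1\kappa)/(-\gamma)>0$ we get $\alpha_2>|\alpha_1|$, and from $q(-\tfrac1\gamma)=\tfrac{a-1}{a}>0$ (see \eqref{eq:q special values}) together with $\gamma<0$ the pole $-\tfrac1\gamma$ of $f_{\kappa,\gamma}$ lies strictly inside $(\alpha_1,\alpha_2)$. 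On the whole segment $q$ is positive, while $(1+x/\kappa)^{\kappa-1}>0$ and $(1+\gamma x)^2>0$, so \eqref{eq:derivative f} shows $f_{\kappa,\gamma}'>0$ and hence $f_{\kappa,\gamma}$ is strictly increasing on each of $(\alpha_1,-\tfrac1\gamma)$ and $(-\tfrac1\gamma,\alpha_2)$. A sign check at the pole (the numerator is positive at $x=-\tfrac1\gamma>0$) gives $f_{\kappa,\gamma}\to+\infty$ as $x\to(-\tfrac1\gamma)^-$ and $f_{\kappa,\gamma}\to-\infty$ as $x\to(-\tfrac1\gamma)^+$.

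The decisive ingredient is a closed form for the two critical values. Using $q(\alpha_i)=0$ one simplifies $1+\gamma\alpha_i=-\tfrac{\alpha_i+\kappa}{\kappa\alpha_i}$ and substitutes into the definition of $f_{\kappa,\gamma}$ to obtain
\[
f_{\kappa,\gamma}(\alpha_i)=-\alpha_i^{2}\Bigl(1+\tfrac{\alpha_i}{\kappa}\Bigr)^{\kappa-1},
\]
with the convention that $(1+\alpha_i/\kappa)^{\kappa-1}$ is read as $e^{\alpha_i}$ when $\kappa=\infty$. Since $-\kappa<\alpha_1<0<\alpha_2$, both bases $1+\alpha_i/\kappa$ are positive, so $f_{\kappa,\gamma}(\alpha_i)<0$; in particular $f_{\kappa,\gamma}(\alpha_1)<0$.

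The core inequality is then $f_{\kappa,\gamma}(\alpha_2)<f_{\kappa,\gamma}(\alpha_1)$, i.e. $|f_{\kappa,\gamma}(\alpha_2)|>|f_{\kappa,\gamma}(\alpha_1)|$ for the values just obtained; recall $\alpha_2>|\alpha_1|$. For $\kappa\ge1$ and for $\kappa=\infty$ this is immediate from the logarithmic form $L(\alpha)=2\ln|\alpha|+(\kappa-1)\ln(1+\alpha/\kappa)$ (respectively $2\ln|\alpha|+\alpha$): the first term is larger at $\alpha_2$ because $\alpha_2>|\alpha_1|$, and $(\kappa-1)\ln(1+\alpha_2/\kappa)\ge0\ge(\kappa-1)\ln(1+\alpha_1/\kappa)$. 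The delicate range is $0<\kappa<1$, where the two terms pull in opposite directions; I expect this to be the main obstacle. There I would substitute $u=\kappa+\alpha>0$ and rewrite the critical value (again via $q(\alpha_i)=0$) as $|f_{\kappa,\gamma}(\alpha_i)|=\Phi(u_i)/(-\gamma\kappa^{\kappa})$, where $\Phi(u)=[(\kappa+1)u-\kappa^{2}]u^{\kappa-1}$ and $u_i=\kappa+\alpha_i$. Since $\Phi'(u)=\kappa u^{\kappa-2}[(\kappa+1)u-\kappa(\kappa-1)]$ and $\tfrac{\kappa(\kappa-1)}{\kappa+1}<0$ for $0<\kappa<1$, the function $\Phi$ is strictly increasing on $(0,\infty)$, so $u_2>u_1>0$ forces $\Phi(u_2)>\Phi(u_1)$, as required. (The same $\Phi$ settles $\kappa>1$ as well, once one checks $u_1>\tfrac{\kappa(\kappa-1)}{\kappa+1}$, equivalently $\alpha_1>-\tfrac{2\kappa}{\kappa+1}$, which follows from $q(-\tfrac{2\kappa}{\kappa+1})=\tfrac{4\gamma\kappa^{2}}{(\kappa+1)^{2}}-1<0$.)

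Finally I would read off $\mathcal S$. By monotonicity and the pole behaviour, $f_{\kappa,\gamma}\bigl((\alpha_1,-\tfrac1\gamma)\bigr)=(f_{\kappa,\gamma}(\alpha_1),+\infty)$ and $f_{\kappa,\gamma}\bigl((-\tfrac1\gamma,\alpha_2)\bigr)=(-\infty,f_{\kappa,\gamma}(\alpha_2))$; because $f_{\kappa,\gamma}(\alpha_2)<f_{\kappa,\gamma}(\alpha_1)$, these two rays are disjoint and their union is the complement of the interval bounded by the two critical values. By \eqref{def:mathcal S} this interval is exactly $\mathcal S=(f_{\kappa,\gamma}(\alpha_2),f_{\kappa,\gamma}(\alpha_1))$; the endpoints are immaterial for the sequel, since only $\overline{\mathcal S}$ enters the definition of the Lambert--Tsallis function.
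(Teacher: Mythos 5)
Your proposal is correct, and all the computations I checked go through: the identity $1+\gamma\alpha_i=-\frac{\alpha_i+\kappa}{\kappa\alpha_i}$ does follow from $q(\alpha_i)=0$, the closed form $f_{\kappa,\gamma}(\alpha_i)=-\alpha_i^2\bigl(1+\frac{\alpha_i}{\kappa}\bigr)^{\kappa-1}$ is right, and $\Phi'(u)=\kappa u^{\kappa-2}\bigl((\kappa+1)u-\kappa(\kappa-1)\bigr)$ together with the location checks $-\kappa<\alpha_1$ and $\alpha_1>-\frac{2\kappa}{\kappa+1}$ settles the comparison. However, your route to the core inequality differs from the paper's. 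The paper never computes the critical values individually; it forms the ratio $\frac{f(\alpha_2)}{f(\alpha_1)}=\frac{\alpha_2(1+\gamma\alpha_1)}{\alpha_1(1+\gamma\alpha_2)}\bigl(\frac{1+\alpha_2/\kappa}{1+\alpha_1/\kappa}\bigr)^{\kappa}$ and uses Vieta's product $\alpha_1\alpha_2=\frac1\gamma$ to collapse the first factor to $\frac{\alpha_2+1}{\alpha_1+1}$; both factors then exceed $1$ because $\alpha_1+1=(1+\gamma\alpha_2)\alpha_1>0$ and $\alpha_2>\alpha_1$ with $1+\alpha_1/\kappa>0$. The advantage of that ratio trick is that the power $\kappa$ (rather than your $\kappa-1$) never changes sign, so no case split between $0<\kappa<1$ and $\kappa\ge1$ is needed and the finite-$\kappa$ and $\kappa=\infty$ arguments are word-for-word parallel. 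What your version buys in exchange is the explicit formula $f_{\kappa,\gamma}(\alpha_i)=-\alpha_i^2\bigl(1+\frac{\alpha_i}{\kappa}\bigr)^{\kappa-1}$ (degenerating to $-\alpha_i^2e^{\alpha_i}$ at $\kappa=\infty$), which gives $f(\alpha_i)<0$ for free and is of independent use; the price is the extra monotonicity analysis of $\Phi$ on the delicate range $0<\kappa<1$. One further small point in your favour: you address the open-versus-closed endpoint issue in $\mathcal{S}=\R\setminus f_{\kappa,\gamma}(\Omega\cap\R)$, which the paper's proof passes over silently; as you say, only $\overline{\mathcal{S}}$ matters downstream.
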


\begin{proof}
Assume that $\kappa<+\infty$.
Since $\gamma<0$, we have by \eqref{eq:derivative f}
\[
\begin{array}{*{10}{c|}c}
x&-\kappa&\cdots&\alpha_1&\cdots&0&\cdots&-\frac1\gamma&\cdots&\alpha_2&\cdots\\ \hline
f'_{\kappa,\gamma}&&-&0&+&+&+&\times&+&0&-\\ \hline
f_{\kappa,\gamma}&&\searrow&f_{\kappa,\gamma}(\alpha_1)&\nearrow&0&\nearrow^{+\infty}&\times&{}_{-\infty}\nearrow&f_{\kappa,\gamma}(\alpha_2)&\searrow
\end{array}
\]
The inequality $f(\alpha_1)<0$ is obvious by the above table.
We shall show $f(\alpha_1)>f(\alpha_2)$.
By the fact that $\alpha_1\alpha_2=\frac{1}{\gamma}$,
we have
\[
\frac{f(\alpha_2)}{f(\alpha_1)}
=
\frac{\alpha_2(1+\gamma \alpha_1)}{(1+\gamma\alpha_2)\alpha_1}\cdot\biggl(\frac{1+\alpha_2/\kappa}{1+\alpha_1/\kappa}\biggr)^\kappa
=
\frac{\alpha_2+1}{\alpha_1+1}\cdot\biggl(\frac{1+\alpha_2/\kappa}{1+\alpha_1/\kappa}\biggr)^\kappa.
\]
Since $1+\gamma\alpha_2<0$ and $\alpha_1<0$,
we have $\alpha_1+1=(1+\gamma\alpha_2)\alpha_1>0$.
Moreover, the facts that $1+\alpha_1/\kappa {>0}$ and  $\alpha_2>\alpha_1$ yield that
\[
\frac{\alpha_2+1}{\alpha_1+1}>1\quad\text{and}\quad
\frac{1+\alpha_2/\kappa}{1+\alpha_1/\kappa}>1,
\]
whence we obtain $\ds\frac{f(\alpha_2)}{f(\alpha_1)}>1$.
Since $f(\alpha_2)<0$ because $\alpha_2>-\frac{1}{\gamma}$ and $\gamma<0$,
we conclude that $0>f(\alpha_1)>f(\alpha_2)$.

Assume that $\kappa=+\infty$.
Since $\gamma<0$ and $\gamma(-\frac1\gamma)^2+(-\frac1\gamma)+1=1>0$, we have the following 
%increasing/decreasing
variation table of $f(x)$:
\[
\begin{array}{c*{11}{|c}}
x&-\infty&\cdots&\alpha_1&\cdots&0&\cdots&-\frac1\gamma&\cdots&\alpha_2&\cdots&+\infty\\ \hline
f'&\multicolumn{2}{|c|}{-}&0&\multicolumn{3}{|c|}{+}&\times&+&0&\multicolumn{2}{|c}{-}\\ \hline
f&0&\searrow&f(\alpha_1)&\nearrow&0&\nearrow&\times&\nearrow&f(\alpha_2)&\searrow&-\infty
\end{array}
\]
Since $\gamma\alpha_i+1=-\frac{1}{\alpha_i}$, we see that $f(\alpha_i)=-\alpha_i^2e^{\alpha_i}<0$.
By $\alpha_1\alpha_2=\frac{1}{\gamma}$, we have
\[
\frac{f(\alpha_2)}{f(\alpha_1)}
=
\frac{\alpha_2(1+\gamma\alpha_1)}{\alpha_1(1+\gamma\alpha_2)}e^{\alpha_2-\alpha_1}
=
\frac{\alpha_2+1}{\alpha_1+1}e^{\alpha_2-\alpha_1}>1,
\]
whence $f(\alpha_2)<f(\alpha_1)<0$.
Thus, the proof is now completed.
\end{proof}

Now we show that $f_{\kappa,\gamma}\colon D\to\C^+$ is bijective.
Since the proof is completely analogous, we only prove the case $\kappa<+\infty$.

We take a path $C=C(t)$ $(t\in {[}0,1 {]})$ in such a way that
by starting from $z=-\frac{1}{\gamma}$,
it goes to $z=\alpha_2$ along the real axis,
next goes to $z=\alpha_1$ along the  curve
{$r_{+-}$ defined by \eqref{im} and} connecting $\alpha_2$ and $\alpha_1$ 
in the upper half plane,
and then it goes to $z=-\frac{1}{\gamma}$ along the real axis
(see Figure~{\ref{fig:path(i)}}).
Here, we can assume that $C'(t)\ne0$ whenever $C(t)\ne\alpha_i$, $i=1,2$.
Actually,
the curve $v(x,y)=0$ has a tangent line unless $f'$ vanishes.
If we take an arc-length parameter $t$,
then $C'(t)$ represents the direction of the tangent line at $(x,y)=C(t)$.
We note that $C(t)$ describes the boundary of $D$.

We show that $f_{\kappa,\gamma}$ maps the boundary of $D$ to $\R$ bijectively.
We take $t_i$, $i=1,2$ as $C(t_i)=\alpha_i$.
Note that the sub-curve $C(t)$, $t\in(t_2,t_1)$ describes the curve $r_{+-}(t)$,
and $f_{\kappa,\gamma}$ does not have a pole or singular point on $C(t)$, $t\in(t_2,t_1)$.
Set $f(z)=u(x,y)+iv(x,y)$.
By Lemma~\ref{lemm:implicit},
the implicit function $v(x,y)=0$ may have an intersection point only if $f'(x+iy)=0$, i.e. at $x+iy=\alpha_i$ $(i=1,2)$ or at  $x+iy=-\kappa$ if $\kappa>1$.
Then, the function $g(t)=u(C(t))$, $t\in[t_2,t_1]$ attains  maximum and minimum  in the interval 
because it is a continuous function on a compact set.
Moreover, $g'$ never vanishes in $(t_2,t_1)$ by the above argument
and by the fact that $f'(C(t))\ne 0$ for $t\in(t_2,t_1)$.
Therefore, $g$ is monotone and hence it takes maximal and minimal values at the endpoints $t=t_2,t_1$.
Now we have $f(\alpha_1)>f(\alpha_2)$ by the last claim so that 
the image of $g$ is $[f(\alpha_2),f(\alpha_1)]$, and the function $g$ is bijective.

We shall show that for any $w_0\in\C^+$ there exists one and only one $z_0\in D$ such that
$f(z_0)=w_0$.
Let us take an $R>0$ such that $|w_0|<R$.
For $\delta>0$,
let ${C'=}C_{\delta}$ be a path obtained from $C$ in such a way that
the pole $z=-1/\gamma$
is avoided by a semi-circle $-\tfrac{1}{\gamma}+\delta e^{i\theta}$, $\theta\in(0,\pi)$ of radius $\delta$
(see Figure \ref{fig:mpath(i)}).
{Denote by $D'$ the domain surrounded by the curve $C'$.}

Then, we can choose $\delta>0$ such that
\[\Bigl|f\bigl(-\tfrac{1}{\gamma}+\delta e^{i\theta}\bigr)\Bigr|>R
\quad
(\text{for all }\theta\in(0,\pi)).
\]
In fact, if $z=-\tfrac{1}{\gamma}+\delta e^{i\theta}$, then we have
\[
\bigl|1+\gamma z\bigr|={|\gamma|}\delta,\quad
\bigl|z\bigr|=\bigl|-\tfrac{1}{\gamma}+\delta e^{i\theta}\bigr|
> \frac{1}{2|\gamma|}\quad(\text{if } \delta<\tfrac{1}{2|\gamma|}),
\]
and
\[
\bigl|1+\tfrac{z}{\kappa}\bigr|
=
\bigl|1-\tfrac{1}{\kappa\gamma}+\tfrac{\delta}{\kappa}e^{i\theta}\bigr|
>
\frac{\kappa\gamma-1}{2\kappa\gamma}\quad
(\text{if }\delta<\tfrac{\kappa}{2}\bigl|1-\tfrac{1}{\kappa\gamma}\bigr|),
\]
so that
\[
\Bigl|f\bigl(-\tfrac{1}{\gamma}+\delta e^{i\theta}\bigr)\Bigr|
>
\frac{1}{2|\gamma|^{ 2}}\Bigl(\frac{\kappa\gamma-1}{2\kappa\gamma}\Bigr)^\kappa\cdot
\frac{1}{\delta}.
\]
Thus it is enough to take \[\delta=\min\Bigl(\tfrac{1}{2|\gamma|^2R}\bigl(\tfrac{\kappa\gamma-1}{2\kappa\gamma}\bigr)^\kappa,\,\tfrac{1}{2|\gamma|},\,\tfrac{\kappa}{2}\bigl|1-\tfrac{1}{\kappa\gamma}\bigr|\Bigr).\]
Since $f$ is non-singular on the semi-circle $-\tfrac{1}{\gamma}+\delta e^{i\theta}$, $\theta\in[0,\pi]$,
the curve $\theta\mapsto f(-\frac{1}{\gamma}+\delta e^{i\theta})$ does not have a singular {angular} point,
so that
it is {homotopic}  to 
a large semicircle (with radius  larger than $R$) in the upper half-plane
(see Figure \ref{fig:fpath(i)}).

Note that
\[
\mathrm{Im}\,f(x+yi)=
\frac{
\Bigl((1+x/\kappa)^2+(y/\kappa)^2\Bigr)^{\tfrac{\kappa}{2}}
}{(1+\gamma x)^2+\gamma^2 y^2}
\bigl\{
    (x+\gamma x^2+\gamma y^2)\sin(\kappa\theta(x,y)) + y\cos(\kappa\theta(x,y))
\bigr\}.
\]
By changing variables as in \eqref{eq:v change}, we have
\[
\begin{array}{r@{\ }c@{\ }l}
\mathrm{Im}\,f(re^{i\theta})
&=&
\text{positive factor}\times \sin(\kappa\theta)\cdot(a r^2+b(\theta)r+a-1)\\
&=&
\text{positive factor}\times \sin(\kappa\theta)\cdot a(r-r_-(\theta))(r-r_+(\theta)).
\end{array}
\]
Note that the inside of the path $C$ can be written as $\set{re^{i\theta}}{\theta\in(0,\theta_*),\ r\in(r_-(\theta),r_+(\theta))}$ in $(r,\theta)$ coordinates.
Since $a<0$ and $\sin(\kappa\theta)>0$ when $\theta\in(0,\theta_*)$,
we see that $\mathrm{Im}\,f(z)>0$ if $z$ is  inside of the path $C$.
In particular, the inside set of the curve $f(C')$ is a bounded domain in $\C^+$ including $w_0$.

Since the winding number of the path $f(C')$ with respect to $w=w_0$ is exactly one,
we see that
\[
\frac{1}{2\pi i}\int_{C'}\frac{f'(z)}{f(z)-w_0}dz
=
\frac{1}{2\pi i}\int_{f(C')}\frac{dw}{w-w_0}=1.
\]

{By definition of $f$}, we see that $f(z)-w_0$ does not have a pole in $D'$.
Therefore, by the argument principle, 
the function $f(z)-w_0$ has  only one zero point, say $z_0{\in D'\subset D}$.
Thus, we obtain $f(z_0)=w_0$, and such $z_0{\in D}$ is unique.
{We conclude that} the map $f$ is a bijection from {$D$} to the upper half-plane ${\C^+}$.

\subsection{The case $\gamma\ge 0$}
Assume that $\gamma\ge0$.
By Propositions~\ref{prop:Omega}, \ref{prop:Omega k=1} and \ref{prop:Omega k infty},
it is equivalent to the condition that $\Omega$ is unbounded.
We first consider the behavior of $|f_{\kappa,\gamma}(z)|$ as $\partial\Omega\ni z\to\infty$.
We have
\[
\bigl|f_{\kappa,\gamma}(z)\bigr|
=
\frac{1}{\bigl|\gamma+\frac1z\bigr|}\cdot\Bigl|1+\frac{z}{\kappa}\Bigr|^\kappa.
\]
We consider the change variables $1+\frac{z}{\kappa}=re^{i\theta}$ for $z\in\Omega$.
Then, we have
\[
\Bigl|\gamma+\frac1z\Bigr|
\le |\gamma|+\frac{1}{|\kappa|}\cdot\frac{1}{|re^{i\theta}-1|}
\le |\gamma|+\frac{1}{|\kappa|\cdot|r-1|},
\]
so that
\[
\bigl|f_{\kappa,\gamma}(z)\bigr|
\ge \frac{r^\kappa}{|\gamma|+\frac{1}{|\kappa|\cdot|r-1|}}\quad
\longrightarrow
\quad
+\infty\quad(\text{as }r\to+\infty).
\]
Propositions~\ref{prop:Omega}, \ref{prop:Omega k=1} and \ref{prop:Omega k infty} show that,
if $\kappa>1$ or $\gamma=0$,
then $\partial\Omega\cap\C^+$ can be described as a connected curve $C(t)$, $t\in[0,1)$ with $C(0)=\alpha_2$ and $\lim_{t\to1-0}C(t)=\infty$.
In this case,
if $z\in\partial\Omega\cap\C^+$,
then we have $F(x,y)=0$ and
\begin{equation}
\label{eq:fkg on boundary}
    f_{\kappa,\gamma}(z)=\frac{\bigl|1+\frac{z}{\kappa}\bigr|^\kappa}{\bigl|1+\gamma z\bigr|^2}\cdot \frac{-y}{\sin(\kappa\theta(x,y))}<0.
\end{equation}
This means that,
if $z\in\C^+$ goes to $\infty$ along the path $\partial\Omega\cap\C^+$,
then $f_{\kappa,\gamma}$ must tend to $-\infty$.

We next consider $\mathcal{S}$ for cases $\kappa\ge 1$ or $\gamma=0$.

\begin{Lemma}
Let $\alpha_i$, $i=1,2$ be the solutions of $\eqref{eq:r}$.
\begin{enumerate}[\rm(1)]
    \item Assume that $\kappa>1$ or $\kappa=\infty$, and also $D(0)\ge 0$.
    Then, $\alpha_i$ are both real and $\mathcal{S}=(-\infty,f_{\kappa,\gamma}(\alpha_2))$ with $f_{\kappa,\gamma}(\alpha_2)<0$.
    \item Assume that $\gamma=0$.
    Then, one has $\alpha_i=-\frac{\kappa}{\kappa+1}$ if $\kappa<+\infty$ and $\alpha_i=-1$ if $\kappa=\infty$ $(i=1,2)$. 
    Moreover, one has $\mathcal{S}=(-\infty,f_{\kappa.\gamma}(\alpha_2))$.
    \item Assume that $\kappa=1$ and $0<\gamma<1$.
    Then, $\alpha_i$ are both real, and one has 
    $\mathcal{S}=(f_{\kappa,\gamma}(\alpha_1),f_{\kappa,\gamma}(\alpha_2))$ 
    with $f_{\kappa,\gamma}(\alpha_2)<0$.
    \item Assume that $\kappa\ge 1$ or $\kappa=\infty$, and also $D(0)<0$.
    Then, $\alpha_i$ are both non-real, and one has $\mathcal{S}=\emptyset$.
    On the other hand,
    one has $f_{\kappa,\gamma}(\partial\Omega\cap\C^+)=(-\infty,0)$.
\end{enumerate}
\end{Lemma}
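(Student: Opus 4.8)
The plan is to use the defining relation $\mathcal{S}=\R\setminus f_{\kappa,\gamma}(\Omega\cap\R)$: in each case I would first read off the real trace $\Omega\cap\R$ from the appropriate description of $\Omega$ (Proposition~\ref{prop:Omega k=1} for $\kappa=1$, Proposition~\ref{prop:Omega} for finite $\kappa\neq1$, Proposition~\ref{prop:Omega k infty} for $\kappa=\infty$), and then determine the image $f_{\kappa,\gamma}(\Omega\cap\R)$ by a monotonicity analysis on the real axis. The decisive tool is the factorization \eqref{eq:derivative f}, which on $\{x>-\kappa\}$ away from the pole $-\tfrac1\gamma$ shows that the sign of $f'_{\kappa,\gamma}(x)$ equals the sign of $q(x)$, where $q$ is the quadratic of \eqref{eq:quad} with roots $\alpha_1,\alpha_2$. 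The relative positions of $\alpha_1,\alpha_2$, of $-\tfrac1\gamma$, and of $-\kappa$ are already recorded in Propositions~\ref{prop:Omega k=1}--\ref{prop:Omega k infty}, and these inequalities are what I would quote to locate the intervals of monotonicity.

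For cases (1), (2), (3) one has $D(0)\ge0$, hence $\alpha_1,\alpha_2$ real. In cases (1) and (2) the descriptions give $\Omega\cap\R=(\alpha_2,+\infty)$ (at $\theta=0$ one has $x=\kappa(r-1)$ and $r>r_+(0)$ corresponds to $x>\alpha_2$); there $q>0$, so $f_{\kappa,\gamma}$ increases strictly from $f_{\kappa,\gamma}(\alpha_2)$ to $+\infty$, giving image $(f_{\kappa,\gamma}(\alpha_2),+\infty)$ and hence $\mathcal{S}=(-\infty,f_{\kappa,\gamma}(\alpha_2))$. The sign $f_{\kappa,\gamma}(\alpha_2)<0$ follows since $\alpha_2<0$ while $1+\gamma\alpha_2>0$ and $\exp_\kappa(\alpha_2)>0$, using $-\tfrac1\gamma<\alpha_2$ and $-\kappa<\alpha_2$ from the propositions. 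In case (3), Proposition~\ref{prop:Omega k=1}(2) gives $\Omega\cap\R=(-\infty,\alpha_1)\cup(\alpha_2,+\infty)$ (the pole lies strictly between the roots, as $q(-\tfrac1\gamma)<0$), and $q>0$ on both pieces makes $f_{1,\gamma}$ increasing on each, with images $(-\infty,f_{1,\gamma}(\alpha_1))$ and $(f_{1,\gamma}(\alpha_2),+\infty)$; the complement is $\mathcal{S}=(f_{1,\gamma}(\alpha_1),f_{1,\gamma}(\alpha_2))$. The inequality $f_{1,\gamma}(\alpha_1)<f_{1,\gamma}(\alpha_2)<0$ I would confirm by a short ratio computation of the same type used in the $\gamma<0$ lemma.

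The main obstacle is case (4), where $D(0)<0$, the $\alpha_i$ are non-real, and $q(x)>0$ for all real $x$. Here the real trace is the punctured strip $\Omega\cap\R=(-\kappa,-\tfrac1\gamma)\cup(-\tfrac1\gamma,+\infty)$ (for finite $\kappa$; for $\kappa=\infty$ the left piece is $(-\infty,-\tfrac1\gamma)$), since $F(x,0)=q(x)>0$ for $x>-\kappa$ removes only the pole. On each piece $f'_{\kappa,\gamma}>0$, and the decisive feature is the pole: as $x\to-\tfrac1\gamma^+$ one has $f_{\kappa,\gamma}(x)\to-\infty$ while $f_{\kappa,\gamma}(x)\to+\infty$ as $x\to+\infty$, so already $f_{\kappa,\gamma}\bigl((-\tfrac1\gamma,+\infty)\bigr)=\R$ and therefore $\mathcal{S}=\emptyset$.

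For the complementary claim $f_{\kappa,\gamma}(\partial\Omega\cap\C^+)=(-\infty,0)$ I would combine three ingredients: formula \eqref{eq:fkg on boundary}, which shows $f_{\kappa,\gamma}<0$ on $\partial\Omega\cap\C^+$; Lemma~\ref{lemm:implicit}, by which $\mathrm{Re}\,f_{\kappa,\gamma}$ is monotone along each arc of the level set $\{\mathrm{Im}\,f_{\kappa,\gamma}=0\}$ between consecutive critical points (here $\alpha_1$ and $-\kappa$); and the description in Proposition~\ref{prop:Omega}(5)(b) of $\partial\Omega\cap\C^+$ as the arc $r_-$ from $-\kappa$ (where $f_{\kappa,\gamma}\to0$, since $\exp_\kappa(-\kappa)=0$) to $\alpha_1$, followed by the unbounded arc $r_+$ from $\alpha_1$ to $\infty$ (where $f_{\kappa,\gamma}\to-\infty$). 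Monotonicity on each arc then glues the endpoint values $0$, $f_{\kappa,\gamma}(\alpha_1)$, and $-\infty$ into the single interval $(-\infty,0)$. The delicate bookkeeping is checking that the two monotone arcs meet at $f_{\kappa,\gamma}(\alpha_1)$ so as to cover $(-\infty,0)$ with neither gap nor overlap, and treating the $\kappa=\infty$ subcase, in which $-\kappa$ is absent and its role is played by the end $x\to-\infty$ of the left boundary arc, along which $f_{\infty,\gamma}=ze^z/(1+\gamma z)\to0$.
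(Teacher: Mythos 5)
Your proposal is correct and follows essentially the same route as the paper: in each case you read off $\Omega\cap\R$ from Propositions~\ref{prop:Omega k=1}, \ref{prop:Omega} and \ref{prop:Omega k infty}, determine $f_{\kappa,\gamma}(\Omega\cap\R)$ from the variation table coming from the sign of the quadratic $q$ in \eqref{eq:derivative f} (with the pole at $-\tfrac1\gamma$ forcing $\mathcal{S}=\emptyset$ in case (4)), and for the boundary claim in (4) combine \eqref{eq:fkg on boundary}, the monotonicity along arcs given by Lemma~\ref{lemm:implicit}, and the limits $0$ at $-\kappa$ and $-\infty$ at infinity. This matches the paper's argument, including the ratio computation for $f(\alpha_1)<f(\alpha_2)<0$ in case (3).
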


\begin{proof}
(1) If $\kappa>1$ and $D(0)\ge 0$,
then 
Proposition \ref{prop:Omega} (5-a) tells us that $\Omega\cup\R=(\alpha_2,+\infty)$.
On the other hand,
if $\kappa=\infty$ and $D(0)\ge 0$, that is, $\gamma\le \frac14$,
then we also have $\Omega\cup\R=(\alpha_2,+\infty)$
by Proposition \ref{prop:Omega k infty} (1) and (3).
Since it is easily verified that $f_{\kappa,\gamma}$ is monotonic increasing on this interval for both cases, 
we have $f_{\kappa,\gamma}(\alpha_2)<0$ and $\ds\lim_{x\to+\infty}f_{\kappa,\gamma}(x)=+\infty$.
Thus,
by definition \eqref{def:mathcal S},
we obtain $\mathcal{S}=(-\infty,f_{\kappa,\gamma}(\alpha_2))$ for both cases.

\noindent
(2) Assume that $\gamma=0$.
Propositions \ref{prop:Omega} (2) and \ref{prop:Omega k infty} (1) show that
we have $\Omega\cap\R=(\alpha,+\infty)$ where $\alpha=\alpha_1=\alpha_2=-\frac{\kappa}{\kappa+1}$ if $\kappa<+\infty$, and if $\kappa=\infty$ then $\alpha=-1$.
Since it is easily verified that $f_{\kappa,\gamma}$ is monotonic increasing on this interval, 
we have $f_{\kappa,\gamma}(\alpha_2)<0$ and $\ds\lim_{x\to+\infty}f_{\kappa,\gamma}(x)=+\infty$.
Thus,
by definition \eqref{def:mathcal S},
we obtain $\mathcal{S}=(-\infty,f_{\kappa,\gamma}(\alpha))$.

\noindent
(3) Assume that $\kappa=1$ and $0<\gamma<1$.
By Proposition \ref{prop:Omega k=1},
we have $\Omega\cap\R=(-\infty,\alpha_1)\cup(\alpha_2,+\infty)$.
An elementary calculation yields that
the image of this set by $f_{\kappa,\gamma}$ is $(-\infty,f_{\kappa,\gamma})\cup (f_{\kappa,\gamma}(\alpha_2),+\infty)$ with $f_{\kappa,\gamma}(\alpha_1)<f_{\kappa,\gamma}(\alpha_2)<0$,
and therefore we obtain $\mathcal{S}=(f_{\kappa,\gamma}(\alpha_1),f_{\kappa,\gamma}(\alpha_2))$.

\noindent
(4) Assume that $\kappa\ge 1$ or $\kappa=\infty$, and also assume that $D(0)<0$.
Then,
Propositions \ref{prop:Omega} (5-b) and \ref{prop:Omega k infty} (2) tell us that $\Omega\cap\R=(-\kappa,+\infty)$, where if $|kappa=\infty$ then we regard $-\kappa$ as $-\infty$.
Since we have $-\kappa<-\frac1\gamma<0$,
we have the following increasing/decreasing table of $f_{\kappa,\gamma}$.
\[
\begin{array}{*{6}{c|}c}
x&-\kappa&\cdots&-\frac1\gamma&\cdots&0&\cdots\\ \hline
f'_{\kappa,\gamma}&&+&\times&+&+&+\\ \hline
f_{\kappa,\gamma}&0&\nearrow^{+\infty}&\times&{}_{-\infty}\nearrow&0&\nearrow^{+\infty}
\end{array}
\] 
Thus, we obtain $\mathcal{S}=\emptyset$.
Let $C(t)$, $t\in[0,1)$ be a path describing $\partial\Omega\cap\C^+$ with $C(0)=\alpha_2$.
Assume that $C(t_1)=\alpha_1$.
Then, by \eqref{eq:fkg on boundary} and by the discussion below of it,
we see that $f_{\kappa,\gamma}(\alpha_1)<0$ and $\lim_{t\to1}f_{\kappa,\gamma}(C(t))=-\infty$.
Thus, by Lemma~\ref{lemm:implicit}, 
we conclude that $f_{\kappa,\gamma}(\partial\Omega\cap\C^+)=(-\infty,0)$.
\end{proof}

We note that if $(\kappa,\gamma)=(1,1)$, then we have $f_{\kappa,\gamma}(z)=z$, and hence we omit this case.
Recall that $D=\Omega\cap\C^+$.
We shall show that $f_{\kappa,\gamma}$ maps $D$ to $\C^+$ bijectively.
We divide cases according to the above lemma.

\subsubsection{The case $(1)$ and $(2)$}

Assume that $\kappa>1$ or $\gamma=0$.
We also assume that $D(0)\ge 0$.
Let us take a path $C=C(t)$, $t\in(0,1)$ in such a way that
by starting from $z=\infty$,
it goes to $z=\alpha_2$ along the curve {$r_+$ defined by} \eqref{im} in the upper half plane,
and then goes to $z=\infty$ along the real axis
(see Figure {\ref{fig:path(ii)}}).
Here, we can assume that $C'(t)\ne0$ whenever $C(t)\ne\alpha_i$, $i=1,2$.
Actually,
the curve $v(x,y)=0$ has a tangent line unless $f'$ vanishes.
If we take an arc-length parameter $t$,
then $C'(t)$ represents the direction of the tangent line at $(x,y)=C(t)$.
We note that $C(t)$ describes the boundary of $D$.
Lemma shows that 
$g(t):=f_{\kappa,\gamma}(C(t))$ is a monotonic increasing function on $(0,1)$ such that 
$g(t)\to-\infty$ if $t\to 0$ and $g(t)\to+\infty $ if $t\to1$.

We shall show that for any $w_0\in\C^+$ there exists one and only one $z_0\in D$ such that
$f(z_0)=w_0$.
Note that 
we have $\mathrm{Im}\,f_{\kappa,\gamma}(z)>0$ for any $z\in D$
by definition.
Let us take an $R>0$ such that $|w_0|<R$.
For $L>0$,
let $\Gamma_L$ be the the circle
$-\kappa+Le^{i\theta}$ of origin $z=-\kappa$ with radius $L$.
Let $L-\kappa$ and $z_L$ be two distinct intersection points of $C$ and $\Gamma_L$.
Let $C':=C_{L}$ be a closed path obtained from $C$ by connecting $L-\kappa$ and $z_L$ via the arc {$A$} of $\Gamma_L$ included in the upper half plane,
\out{and by avoiding the zero $z=0$ by a semi-circle $\delta e^{i\theta}$, $\theta\in(0,\pi)$ of radius $\delta$,}
see Figure \ref{fig:mpath(ii)}.

Since $f$ is non-singular on the arc {$A$},
the curve $f({A})$ does not have a singular point so that
it is 
homotopic to a large {semi}-circle (whose radius is larger than $R$) in the upper half plane
(see Figure~\ref{fig:fpath(ii)}).
In particular, the inside set
${f(D')}$ of the curve $f(C')$ is a bounded domain including $w_0\in\C^+$.
Since the winding number of the path $f(C')$ about $w=w_0$ is exactly one,
we see that
\[
\frac{1}{2\pi i}\int_{C'}\frac{f'(z)}{f(z)-w_0}dz
=
\frac{1}{2\pi i}\int_{f(C')}\frac{dw}{w-w_0}=1.
\]
We know by definition of $f$ that  $f$ does not have a pole on $D'$.
Therefore, by the argument principle, 
the function $f(z)-w_0$ has the only one zero point, say $z_0\in D'$.
Then, we obtain $f(z_0)=w_0$, and such $z_0$ is unique.
We conclude that the map $f$ is bijection from the interior set $D$ of $C$ to the upper half plane.

\subsubsection{The case $(3)$}

Assume that $\kappa=1$ and $0<\gamma<1$.
Then, $\Omega$ is given in Proposition~\ref{prop:Omega k=1}.
By a simple calculation,
we see that $\alpha_i$, $i=1,2$ are both real such that $\alpha_1<\alpha_2<0$,
and we have $\Omega\cap\R=(-\infty,\alpha_1)\cup(\alpha_2,+\infty)$.
Moreover,
\[
f_{\kappa,\gamma}([\alpha_1,+\infty))=[f_{\kappa,\gamma}(\alpha_1),+\infty),\quad
f_{\kappa,\gamma}((-\infty,\alpha_2])=(-\infty,f_{\kappa,\gamma}(\alpha_2)],
\]
and we have $\mathcal{S}=(f_{\kappa,\gamma}(\alpha_1),f_{\kappa,\gamma}(\alpha_2))$.
Let $C=C(t)$, $t\in[0,1]$ be a path in $\C^+$ connecting $z=\alpha_1$ to $z=\alpha_2$ along $\partial\Omega\cap\C^+$.
Then, since $f_{\kappa,\gamma}'(C'(t))\ne 0$ for any $t\in(0,1)$ and since $f_{\kappa,\gamma}(C(t))\in\R$,
Lemma ~\ref{lemm:implicit} tells us that $f_{\kappa,\gamma}(C(t))$ is monotonic increasing on the interval $(0,1)$,
and hence $f_{\kappa,\gamma}(C([0,1]))=\overline{\mathcal{S}}$.
This shows that $f_{\kappa,\gamma}$ maps $\partial D$ to $\R$ bijectively.

We can show the bijectivity of $f_{\kappa,\gamma}$ on $D$ to $\C^+$ similarly to the case $\kappa>1$ and $D(0)\ge 0$
by taking $\Gamma_L$ to be the semi-circle $-1+Le^{i\theta}$ of origin $z=-1$ with radius $L$,
contained in the upper half plane.
Thus, we omit the detail.

% We shall show that for any $w_0\in\C^+$ there exists one and only one $z_0\in D$
% such that $f_{\kappa,\gamma}(z_0)=w_0$.
% Let us take an $R>0$ such that $|w_0|<R$.
% For $L>1$,
% let .
% Let $z_\pm=-1\pm L$ be the two distinct intersection points of $\partial D$ and $\Gamma_L$.
% Let $C'$ be a closed path obtained from $C$ by connecting $z_\pm$ via the semi-circle $\Gamma_L$.

% Since $f_{\kappa,\gamma}$ is non-singular on $\Gamma_L$,
% the curve $f_{\kappa,\gamma}(\Gamma_L)$ does not have a singular point
% so that it is homotopic to a large semi-circle whose radius is larger than $R$ in the upper half plane.
% In particular,
% the inside set $f_{\kappa,\gamma}(D')$ of the curve $f_{\kappa,\gamma}(C')$ is a bounded domain including $w\in\C^+$.
% Since the winding number of the path $f(C')$ about $w=w_0$ is exactly one,
% we see that
% \[
% \frac{1}{2\pi i}\int_{C'}\frac{f'(z)}{f(z)-w_0}dz
% =
% \frac{1}{2\pi i}\int_{f(C')}\frac{dw}{w-w_0}=1.
% \]
% We know by definition of $f$ that $f$ does not have a pole on $D'$.
% Therefore, by the argument principle, 
% the function $f(z)-w_0$ has the only one zero point, say $z_0\in D'$.
% Then, we obtain $f(z_0)=w_0$, and such $z_0$ is unique.
% We conclude that the map $f$ is bijection from the interior set $D$ of $C$ to the upper half plane.

\subsubsection{The case $(4)$}

Assume that $\kappa\ge 1$ and $D(0)<0$.
Then,
the solutions $\alpha_i$, $i=1,2$ of \eqref{eq:quad} are both non-real complex numbers by $D(0)<0$.
Let $\mathrm{Im}\,\alpha_1>0$.
In this case, we have $-\kappa<-\frac1\gamma<0$
and
\[
\begin{array}{*{6}{c|}c}
x&-\kappa&\cdots&-\frac1\gamma&\cdots&0&\cdots\\ \hline
f'_{\kappa,\gamma}&&+&\times&+&+&+\\ \hline
f_{\kappa,\gamma}&0&\nearrow^{+\infty}&\times&{}_{-\infty}\nearrow&0&\nearrow^{+\infty}
\end{array}
\] 
Note that if $\kappa=\infty$, then we regard $-\kappa$ as $-\infty$.
By the above table,
we have $\mathcal{S}=\emptyset$, and set $\mathcal{S}'=(-\infty,0)$.
Since we have for $z=x+yi\in\partial\Omega$
\[
f_{\kappa,\gamma}(z)=C((x+\gamma x^2+\gamma y^2)\cos(\kappa \theta(x,y))-y\sin(\kappa\theta(x,y)))
=-C\frac{y}{\sin(\kappa\theta(x,y))},
\]
we see that $f_{\kappa,\gamma}(\alpha_1)<0$.
This shows that the image of the function $f_{\kappa,\gamma}(z(\theta))$, $\theta\in(0,\theta_0)$ 
is $(-\infty,0)$.
Let us adopt a similar argument of the proof of bijectivity of the map $f_{\kappa,\gamma}$ as in unbounded cases.
since the winding number of the path of the boundary $\partial D$ is two,
we see that the map $f_{\kappa,\gamma}$ maps $D$ to $\C^+$ in two-to-one,
and hence  $f_{\kappa,\gamma}$ does not map $D$ to $\C^+$ bijectively.

%We have completed the proof of Theorem~\ref{theorem:lambert-tsallis}.

\subsubsection{}%{The case $0<\kappa<1$}

Assume that $0<\kappa<1$ and $\gamma>0$.
In this case, Proposition~\ref{prop:Omega} tells us that 
if $0<a=\kappa\gamma<1$ then $\Omega\cap\R=(\alpha_2,+\infty)$,
and if $a>1$ then $\Omega\cap\R=\set{x\in\R}{x>-\kappa\text{ and }x\ne-\frac1\gamma}$.
If $f_{\kappa,\gamma}$ maps $D=\Omega\cap\C^+$ to $\C^+$ bijectively,
then it needs map $\partial D=\mathcal{I}$ to $\R$.
% We first note that,
% it does not hold when the case $\Omega=\set{z\in\C}{z\not\in(-\infty,-\kappa]}$,
% which corresponds to the cases (3) with $\kappa<0$, (4-a), (5-c) in Proposition~\ref{prop:Omega}.
% In fact, we have $D=\C^+$ in this case.
However, if $x\in\R$ satisfies $x<\min(\alpha_1,-\kappa)$, 
then $\exp_\kappa(z)$ to $\bigl|1+\frac{x}{\kappa}\bigr|^\kappa\,e^{i\kappa\pi}$ 
as $z\to x$ via the arc $re^{i\theta}$ where $r=\bigl|1+\frac{x}{\kappa}\bigr|$.
Since $0<\kappa<1$, then we see that $\lim_{z\to x}f_{\kappa,\gamma}(z)$ is not real and hence
$f_{\kappa,\gamma}$ cannot map $D$ to $\C^+$ bijectively.

Now we have completed the proof of Theorem~\ref{theorem:lambert-tsallis} for the case $\kappa>0$ or $\kappa=\infty$.
\qed

% \subsection{The case $\kappa=1$ and $0< \gamma\le 1$}
% Assume that $\kappa=1$ and $0< \gamma\le 1$.
% Note that the case $\gamma=1$ is the trivial case $f_{\kappa,\gamma}(z)=z$,
% and thus we consider the case $0<\gamma<1$.
% Then, 

% \subsection{The case of $\gamma=0$}
% Assume that $\gamma=0$.
% If $\kappa=\infty$, then it is exactly the original Lambert function so that we omit it.
% Let $\kappa>0$.
% Then, Propositions \ref{prop:Omega} and \ref{prop:Omega k=1} tells us that
% \[
% \Omega=\set{z\in\domainF}{|\theta|<\frac{\pi}{\kappa+1}\text{ and }r>r(\theta)}
% \]
% with $r(\theta)=r_\pm(\theta)=\frac{\sin(\kappa\theta)}{\sin((\kappa+1)\theta)}$.
% We note that this expression includes the case $\kappa=1$ because in this case we have
% $r(\theta)=\frac{1}{2\cos\theta}$ and 
% the curve $\set{r(\theta)}{|\theta|<\frac\pi2}$ draws a line $x=-\frac12$.

% We have $\alpha:=\alpha_1=\alpha_2=-\frac{\kappa}{\kappa+1}>-\kappa$.
% Hence
% $\Omega\cap\R=(\alpha,+\infty)$
% and $\mathcal{S}=(f_{\kappa,\gamma}(\alpha),+\infty)$.
% Let $C(t)$, $t\in[0,1]$ be a path in $\C^+$
% connecting $z=\alpha$ to $z=\infty$ along $\partial\Omega\cap\C^+$.

% \bl{TODO}

% \subsection{The case $\kappa\ge 1$ and $D(0)<0$}
% Assume that $\kappa\ge 1$ or $\kappa=\infty$.
% We also assume that $D(0)<0$.

%\subsection{The case $0<\kappa<1$ and $\gamma\ge 0$}

\section{The case of $\kappa<0$}

We shall complete the proof of Theorem~\ref{theorem:lambert-tsallis} by proving it 
for the case $\kappa<0$.
To do so, let us recall the homographic (linear fractional) action of $SL(2,\R)$ on $\C$. 
For $\pmat{a&b\\c&d}\in SL(2,\R)$ and $z\in\C^+$, we set
\[
\pmat{a&b\\c&d}\cdot z:=\frac{az+b}{cz+d}.
\]
For each $g\in SL(2,\R)$,
the corresponding homographic action map $\C^+$ to $\C^+$ bijectively.
Let $\kappa=-\kappa'$ with positive $\kappa'>0$. Consider the transformation
\[
    1+\frac{z'}{\kappa'}=\Bigl(1+\frac{z}{\kappa}\Bigr)^{-1}.
\]
Then, it can be written as
\[
    z'=\pmat{1&0\\1/\kappa&1}\cdot z=\frac{z}{1+z/\kappa}
    \quad\Longleftrightarrow\quad
    z=\pmat{1&0\\-1/\kappa&1}\cdot z'=\frac{z'}{1-z'/\kappa}.
\]
Note that since $\pmat{1&0\\1/\kappa&1}\in SL(2,\R)$, it maps $\C^+$ to $\C^+$ bijectively.
Then, since
\[
    \begin{array}{r@{\ }c@{\ }l}
    \ds
    \frac{z}{1+\gamma z}=\pmat{1&0\\ \gamma&1}\cdot z
    &=&
    \ds
    \pmat{1&0\\ \gamma&1}\pmat{1&0\\-1/\kappa&1}\cdot z'
    =
    \pmat{1&0\\ \gamma-1/\kappa&1}\cdot z'\\
    &=&
    \ds
    \frac{z'}{1+(\gamma-1/\kappa)z'}
    =
    \frac{z'}{1+(\gamma+1/\kappa')z'}
    \end{array}
\]
and
\[
    \Bigl(1+\frac{z}{\kappa}\Bigr)^\kappa
    =
    \biggl(\Bigl(1+\frac{z}{\kappa}\Bigr)^{-1}\biggr)^{-\kappa}
    =
    \Bigl(1+\frac{z'}{\kappa'}\Bigr)^{\kappa'}
\]
(recall that  we are taking the main branch so that $\log z=-\log(z^{-1})$),
we obtain
\[
    f_{\gamma,\kappa}(z)
    =
    \frac{z}{1+\gamma z}
    \Bigl(1+\frac{z}{\kappa}\Bigr)^\kappa
    =
    \frac{z'}{1+(\gamma+1/\kappa')z'}
    \Bigl(1+\frac{z'}{\kappa'}\Bigr)^{\kappa'}
    =
    f_{\gamma+1/\kappa',\,\kappa'}(z').
\]
Set $\gamma'=\gamma+1/\kappa'$.
Since homographic actions map $\C^+$ to $\C^+$ bijectively,
there exists a domain $\Omega$ such that $f_{\kappa,\gamma}$ maps $D=\Omega\cap\C^+$ to $\C^+$ bijectively
if and only if it holds for $f_{\gamma',\kappa'}$.
Thus,
\[
\gamma'\le 0
\iff
\gamma\le \frac1\kappa,
\]
and $\kappa'>1$ and $\gamma'>0$ with $\gamma'\le \frac14(1+\frac{1}{\kappa'})^2$ is equivalent to
\[
\gamma>\frac1\kappa\quad\text{and}\quad\kappa<-1\quad\text{with}\quad
\gamma-\frac1\kappa\le\frac14\Bigl(1-\frac1\kappa\Bigr)^2\iff \gamma\le \frac14\Bigl(1+\frac1\kappa\Bigr)^2.
\]
This shows the case $\kappa<0$ in Theorem~\ref{theorem:lambert-tsallis},
and hence we have completed the proof of Theorem \ref{theorem:lambert-tsallis}.

\newpage

\section{Appendix}

\subsection{Tables}

This subsection contains increasing/decreasing tables of $H_\alpha$ and $b'(\theta)$.

\begin{Lemma}
\label{lemma:table_H}
One has the following increasing/decreasing table of $H_\alpha$.
\[
\begin{array}{|l||l|l|}
\hline
\ds {\rm(A)}\ 0<\alpha\le\frac12
&
\begin{array}{*{5}{c|}c}
x&0&\cdots&\frac{2\pi}{\alpha+1}&\cdots&2\pi\\ \hline
H'_\alpha&0&+&0&\multicolumn{2}{c}{+}\\ \hline
H_\alpha&0&\nearrow&H_\alpha(\frac{2\pi}{\alpha+1})&\searrow&H_\alpha(2\pi)
\end{array}
&
H_\alpha(2\pi)\ge 0\\ \hline\hline
\ds {\rm(B)}\ \frac12<\alpha<1
&
\begin{array}{*{5}{c|}c}
x&0&\cdots&\frac{2\pi}{\alpha+1}&\cdots&2\pi\\ \hline
H'_\alpha&0&+&0&\multicolumn{2}{c}{-}\\ \hline
H_\alpha&0&\nearrow&H_\alpha(\frac{2\pi}{\alpha+1})&\searrow&H_\alpha(2\pi)
\end{array}
&
H_\alpha(2\pi)< 0\\ \hline \hline
\ds {\rm(C)}\ 1<\alpha<2
&
\begin{array}{*{5}{c|}c}
x&0&\cdots&\frac{2\pi}{\alpha+1}&\cdots&\frac{2\pi}{\alpha}\\ \hline 
H'_\alpha&0&-&0&\multicolumn{2}{c}{+}\\ \hline
H_\alpha&0&\searrow&H(\frac{2\pi}{\alpha+1})&\nearrow&H_\alpha(\frac{2\pi}{\alpha})
\end{array}
&
H_\alpha(\tfrac{2\pi}{\alpha})>0\\ \hline\hline
\ds {\rm(D)}\ \alpha\ge 2
&
\begin{array}{*{5}{c|}c}
x&0&\cdots&\frac{2\pi}{\alpha+1}&\cdots&\frac{2\pi}{\alpha}\\ \hline
H'_\alpha&0&-&0&+&\\ \hline
H_\alpha&0&\searrow&H_\alpha(\frac{2\pi}{\alpha+1})&\nearrow&H_\alpha(\frac{2\pi}{\alpha})
\end{array}
&
H_\alpha(\frac{2\pi}{\alpha})\le 0\\ \hline
\end{array}
\]
\end{Lemma}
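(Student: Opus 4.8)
The plan is to read the whole table off the factorisation of $H'_\alpha$ already established before Lemma~\ref{lemma:func_H}, namely
\[
H'_\alpha(x)=-2\alpha\sin\Bigl(\frac{\alpha+1}{2}x\Bigr)\sin\Bigl(\frac{\alpha-1}{2}x\Bigr).
\]
All that remains is to determine the sign of each sine factor on $I_1=(0,\min(\frac{2\pi}{\alpha},2\pi))$, and this reduces to locating the zeros $\frac{2m\pi}{\alpha+1}$ and $\frac{2n\pi}{\alpha-1}$ of the two factors relative to the endpoints of $I_1$. The right-hand column of each table is then obtained by evaluating $H_\alpha$ at the right endpoint of $I_1$.

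First I would treat $0<\alpha<1$, where $I_1=(0,2\pi)$. The zero $\frac{2\pi}{\alpha+1}$ of the first factor lies in $I_1$, and the chain of inequalities recorded just before Lemma~\ref{lemma:func_H} shows it is the only interior zero of $H'_\alpha$. The second factor satisfies $\sin(\frac{\alpha-1}{2}x)=-\sin(\frac{1-\alpha}{2}x)<0$ throughout $I_1$, since $\frac{1-\alpha}{2}x\in(0,(1-\alpha)\pi)\subset(0,\pi)$. Because $\frac{\alpha+1}{2}x$ never reaches $2\pi$, the first factor is positive on $(0,\frac{2\pi}{\alpha+1})$ and negative on $(\frac{2\pi}{\alpha+1},2\pi)$, whence $H'_\alpha>0$ then $H'_\alpha<0$: this is the increasing/decreasing shape of (A) and (B). The subcases are separated by the already computed value $H_\alpha(2\pi)=\sin(2\pi\alpha)$, which is $\ge 0$ for $0<\alpha\le\frac12$ and $<0$ for $\frac12<\alpha<1$.

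Next I would treat $\alpha>1$, where $I_1=(0,\frac{2\pi}{\alpha})$. Here the second factor $\sin(\frac{\alpha-1}{2}x)$ is strictly positive on $I_1$, because its first zero $\frac{2\pi}{\alpha-1}$ exceeds $\frac{2\pi}{\alpha}$; and again $\frac{2\pi}{\alpha+1}$ is the unique interior zero of $H'_\alpha$, with $\frac{\alpha+1}{2}x$ ranging in $(0,\frac{(\alpha+1)\pi}{\alpha})\subset(0,2\pi)$, so that $\sin(\frac{\alpha+1}{2}x)>0$ on $(0,\frac{2\pi}{\alpha+1})$ and $<0$ on $(\frac{2\pi}{\alpha+1},\frac{2\pi}{\alpha})$. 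This gives $H'_\alpha<0$ then $H'_\alpha>0$, the decreasing/increasing shape of (C) and (D), and the endpoint value $H_\alpha(\frac{2\pi}{\alpha})=-\alpha\sin(\frac{2\pi}{\alpha})$ separates the subcases: it is $>0$ for $1<\alpha<2$ and $\le 0$ for $\alpha\ge 2$.

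Since every ingredient already appears in the discussion surrounding Lemma~\ref{lemma:func_H}, there is no genuine obstacle; the only point needing care is the bookkeeping that confirms $\frac{2\pi}{\alpha+1}$ is the sole interior critical point while the zeros of the second sine factor remain outside $I_1$, which is precisely the content of the displayed inequalities preceding Lemma~\ref{lemma:func_H}.
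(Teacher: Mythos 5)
Your proof is correct and follows essentially the same route as the paper: the paper's own argument (given in the proof of Lemma~\ref{lemma:func_H}) uses the same factorization of $H'_\alpha$, identifies $\frac{2\pi}{\alpha+1}$ as the unique critical point in $I_1$, and evaluates $H_\alpha$ at the right endpoint of $I_1$ to separate the subcases, with your explicit sign analysis of the two sine factors merely making the sign determination more transparent. Note only that in the stated table for case (A) the entry $+$ for $H'_\alpha$ to the right of $\frac{2\pi}{\alpha+1}$ is a typo for $-$; your argument correctly gives $H'_\alpha<0$ there, consistent with the $\searrow$ in the $H_\alpha$ row.
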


\begin{Lemma}
\label{lemma:table of b prime}
The signature of $b'(\theta)$ on the interval $I$ is given as follows.
\[
\begin{array}{|l|c|l|l|}
\hline
\multicolumn{4}{|c|}{\ds 0<\kappa<\tfrac{1}{2}}\\ \hline 
(1)&
    \begin{array}{*{5}{c|}c}
    \theta&0&\cdots&\varphi_*&\cdots&\pi\\ \hline
    b'(\theta)&&+&0&-&
    \end{array}
&
    \begin{array}{rl}
    {\rm(i)}& \ell(\kappa,a)\ge 0,\\
    {\rm(ii)}& \ell(\kappa,a)<0,\ -J_\kappa(a)>\frac1\kappa
    \end{array}
&
a>\frac{2\kappa^2+3\kappa+1}{6\kappa}
\\ \hline
(2)&
    b'(\theta)<0\text{ for }\theta\in I
&
    \begin{array}{rl}
    {\rm(i)}&\ell(\kappa,a)<0,\ -J_\kappa(a)\le \frac{1}{\kappa}
    \end{array}
&a\le \frac{2\kappa^2+3\kappa+1}{6\kappa}
\\ \hline \hline
\multicolumn{4}{|c|}{\kappa=\frac12}\\ \hline
(1)&
    b'(\theta)=0\text{ for }\theta\in I
&
    \ell(\kappa,a)=0
&
    a=1
\\ \hline
(2)&
    b'(\theta)>0\text{ for }\theta\in I
&
    \ell(\kappa,a)>0
&
    a>1
\\ \hline
(3)&
    b'(\theta)<0\text{ for }\theta\in I
&
    \ell(\kappa,a)<0
&
    a<1
\\ \hline \hline
\multicolumn{4}{|c|}{\frac12<\kappa<1}\\ \hline
(1)&
    \begin{array}{*{5}{c|}c}
    \theta&0&\cdots&\varphi_*&\cdots&\pi\\ \hline
    b'(\theta)&&-&0&+&
    \end{array}
&
    \begin{array}{rl}
    {\rm(i)}&\ell(\kappa,a)<0,\\
    {\rm(ii)}&\ell(\kappa,a)=0,\\
    {\rm(iii)}&\ell(\kappa,a)>0,\ -J_\kappa(a)>\frac1\kappa
    \end{array}
&
    a<\frac{2\kappa^2+3\kappa+1}{6\kappa}
\\ \hline
(2)&
    b'(\theta)>0\text{ for }\theta\in I
&
    -J_\kappa(a)\le \frac1\kappa,\ \ell(\kappa,a)>0
&
    a\ge \frac{2\kappa^2+3\kappa+1}{6\kappa}
% \\ \hline
% (3)&
% +0-0+
% &
%     \begin{array}{l}
%     0\le -J_\kappa(a)\le F_\kappa(x_*)\\
%     \text{if }b'({}^\exists \varphi)<0
%     \end{array}
% &
\\ \hline \hline
\multicolumn{4}{|c|}{\kappa>1}\\ \hline
(1)&
    \begin{array}{*{5}{c|}c}
    \theta&0&\cdots&\varphi_*&\cdots&\frac\pi\kappa\\ \hline
    b'(\theta)&&+&0&-&
    \end{array}
&
    \ell(\kappa,a)>0,\ -J_\kappa(a)<\frac1\kappa
&
    a>\frac{2\kappa^2+3\kappa+1}{6\kappa}
\\ \hline
(2)&
    b'(\theta)<0\text{ for }\theta\in I
&
    \begin{array}{rl}
    {\rm(i)}&\ell(\kappa,a)<0,\\
    {\rm(ii)}&\ell(\kappa,a)=0,\\
    {\rm(iii)}&\ell(\kappa,a)>0,\ -J_\kappa(a)\ge \frac1\kappa
    \end{array}
&
    a\le \frac{2\kappa^2+3\kappa+1}{6\kappa}
\\ \hline

\end{array}
\]
If $\kappa=1$, then one has
\[
b'(\theta)=2(a-1)\sin\theta.
\]
\end{Lemma}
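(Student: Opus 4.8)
The plan is to prove the table not for $b'(\theta)$ directly but for the auxiliary function $B(\theta)=2\sin^2(\kappa\theta)\,b'(\theta)$ of \eqref{def:B}; since $\sin^2(\kappa\theta)>0$ throughout the open interval $I=I_0$, the functions $B$ and $b'$ share the same sign there, so it suffices to determine the sign of $B$. The advantage is that $B$ has the clean derivative \eqref{eq:Bprime}, namely $B'(\theta)=2\ell(\kappa,a)\cos\theta\sin^2(\kappa\theta)\bigl(F_\kappa(\theta)+J_\kappa(a)\bigr)$ when $\ell(\kappa,a)\neq0$, together with the degenerate forms \eqref{eq:Bprime2} (for $\ell(\kappa,a)=0$) and \eqref{eq:b prime k12} (for $\kappa=\tfrac12$). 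The sign of $B'$ thus factors through three ingredients: the constant $\ell(\kappa,a)$, the factor $\cos\theta$ (which changes sign at $\tfrac\pi2$ precisely when $\kappa<2$, where $F_\kappa$ has its pole), and the quantity $F_\kappa(\theta)+J_\kappa(a)$, in which only $F_\kappa(\theta)$ depends on $\theta$.

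First I would locate the interior zeros of $B'$, which away from $\theta=\tfrac\pi2$ are exactly the solutions of $F_\kappa(\theta)=-J_\kappa(a)$. Here I invoke the behavior table of $F_\kappa$ from Lemma~\ref{lemma:table_F} to count and place these solutions according to where the constant level $-J_\kappa(a)$ sits relative to the two distinguished values $F_\kappa(0)=\tfrac1\kappa$ and $F_\kappa(x_*)$. The threshold $a=G(\kappa)$ enters precisely because, as noted after \eqref{eq:Bprime}, it is the unique value solving $F_\kappa(0)+J_\kappa(a)=0$, i.e. $-J_\kappa(a)=\tfrac1\kappa$; combined with the monotonicity of $J_\kappa$ from Lemma~\ref{lemma:func_J} and the computation $G(\kappa)-\tfrac{2\kappa+1}{4\kappa}=\tfrac{(2\kappa+1)(2\kappa-1)}{12\kappa}$ comparing $G(\kappa)$ to the pole $\tfrac{2\kappa+1}{4\kappa}$ of $J_\kappa$ (equivalently the zero of $\ell$, which also fixes the relevant branch of $J_\kappa$), this lets me rewrite each condition in the middle column, phrased via $\ell$ and $J_\kappa$, as the single inequality $a\gtrless G(\kappa)$ in the right column. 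Feeding the sign of each of the three factors into $B'$ then produces the monotonicity table of $B$ in each of the four $\kappa$-regimes.

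To pass from the monotonicity of $B$ to its sign I would pin down the boundary values: $B(0)=0$ always, while $B(\pi)=H_{2\kappa+1}(\pi)=-\sin(2\kappa\pi)$ for $0<\kappa\le1$ (negative for $0<\kappa<\tfrac12$, positive for $\tfrac12<\kappa<1$) and $B(\tfrac\pi\kappa)=H_{2\kappa+1}(\tfrac\pi\kappa)=-2\kappa\sin(\tfrac\pi\kappa)<0$ for $\kappa>1$. In the simple regimes $B$ is monotone or has a single critical point, so these endpoint signs together with $B(0)=0$ immediately force the claimed pattern ($+,0,-$ or $-,0,+$ or a constant sign). The degenerate cases $\ell(\kappa,a)=0$ (use \eqref{eq:Bprime2}, whose sign is that of $\tfrac{1-4\kappa^2}{\kappa}\cos\theta$) and $\kappa=\tfrac12$ (use \eqref{eq:b prime k12}, where $b'(\theta)=2(a-1)\sin\theta$ has the sign of $a-1$) are handled directly, and the closing formula $b'(\theta)=2(a-1)\sin\theta$ for $\kappa=1$ is read off from \eqref{eq:b}.

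The main obstacle will be the regimes in which $B'$ has two interior zeros $\varphi_1<\varphi_2$: these are exactly the subcases claiming a \emph{constant} sign for $b'$ while $B$ is non-monotone (up--down--up for $\tfrac12<\kappa<1$, down--up--down for $\kappa>1$). Here the endpoint signs alone do not preclude $B$ from crossing zero at an intermediate dip, so I must evaluate $B$ at the critical points directly. The tool is the identity \eqref{eq:lemma18-1}, which gives $(2a-1)F_\kappa(\varphi_i)+1=\tfrac{4a(a-1)}{\ell(\kappa,a)}$ at any zero $\varphi_i$ of $B'$, substituted into the closed form $B(\theta)=2\bigl[(2a-\kappa-1)\sin\theta+\cos\theta\sin(\kappa\theta)\cos(\kappa\theta)\bigl((2a-1)F_\kappa(\theta)+1\bigr)\bigr]$. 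Checking the signs of $2a-\kappa-1$ and of $\cos\varphi_i\sin(\kappa\varphi_i)\cos(\kappa\varphi_i)$, using that $\varphi_i$ lies beyond $\tfrac\pi2$ in these regimes, then shows $B(\varphi_i)$ has the safe sign, so $B$ cannot vanish in the interior and keeps constant sign. This is exactly the argument already carried out in cases (i) and (ii) of the proof of Lemma~\ref{lemma:func_b prime}, which I would reuse.
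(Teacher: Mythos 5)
Your proposal is correct and follows essentially the same route as the paper: reduce to $B(\theta)=2\sin^2(\kappa\theta)b'(\theta)$, determine the sign of $B'$ via \eqref{eq:Bprime}--\eqref{eq:Bprime2} together with Lemmas~\ref{lemma:table_F} and \ref{lemma:func_J} (with $G(\kappa)$ arising from $F_\kappa(0)+J_\kappa(a)=0$), fix the endpoint values $B(0)=0$, $B(\pi)$, $B(\tfrac\pi\kappa)$, and settle the two non-monotone regimes by evaluating $B(\varphi_i)$ through the identity \eqref{eq:lemma18-1}, exactly as in the paper's proof of Lemma~\ref{lemma:func_b prime}. Your endpoint computations (e.g.\ $B(\pi)=-\sin(2\kappa\pi)$) are in fact the correctly signed versions of the paper's, so no changes are needed.
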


\newpage

\subsection{Figures}

This subsection collects figures of graphs of $f_{\kappa,\gamma}(x)$ (for real $x$), $F_\kappa$,
and of shapes of $\Omega$ with some deformation.

\begin{figure}[ht]
    \centering
    \begin{tabular}{cc}
    \begin{minipage}{0.5\textwidth}
        \centering
        \includegraphics[scale=0.25]{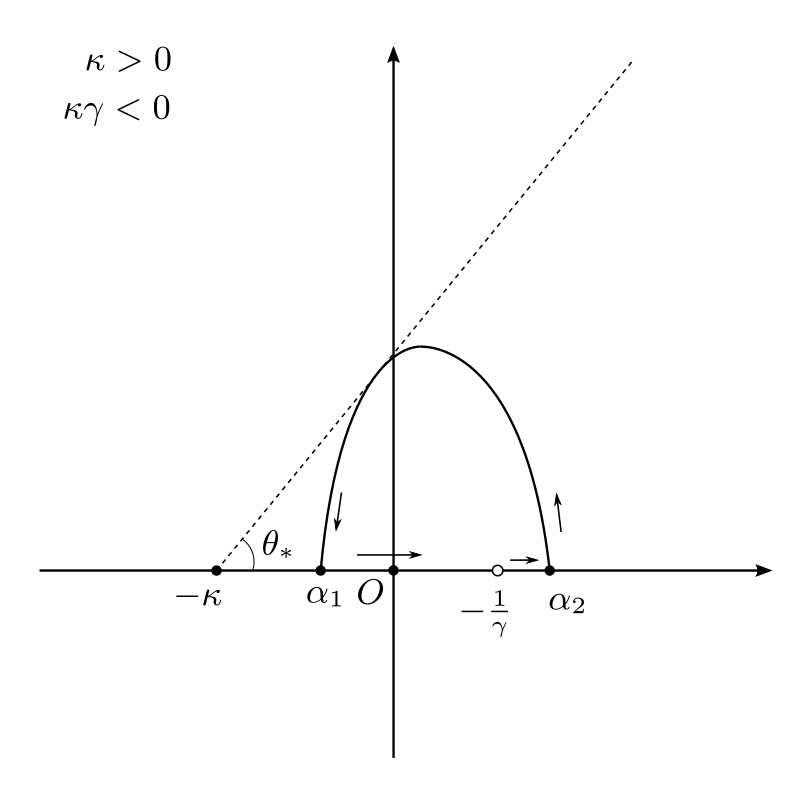}
        \caption{The case of (i)}
        \label{fig:path(i)}
    \end{minipage}
    &
    \begin{minipage}{0.5\textwidth}
        \centering
        \includegraphics[scale=0.25]{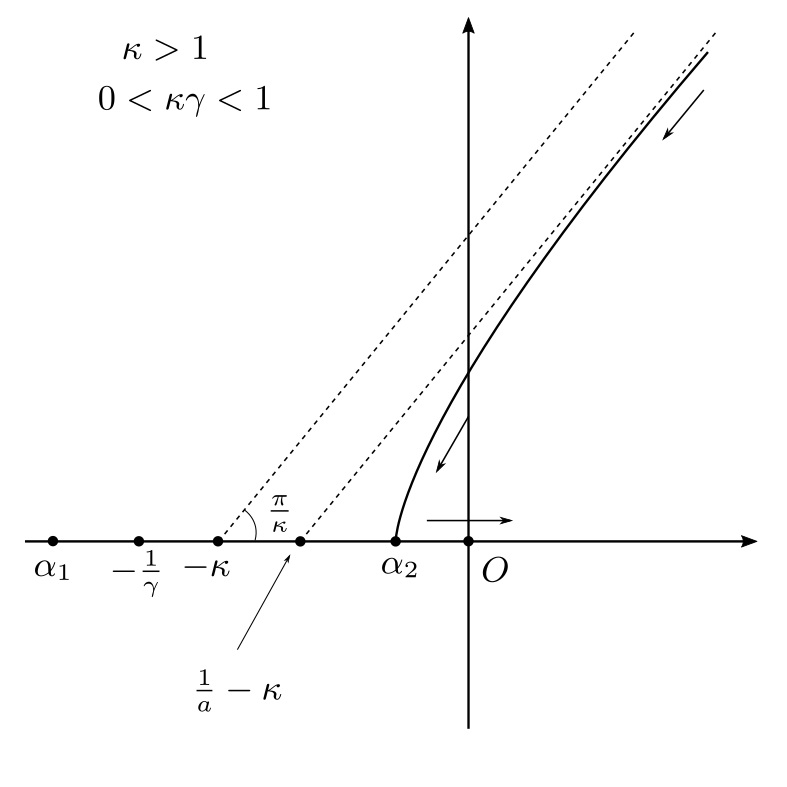}
        \caption{The case of (ii), {when $\kappa>2$}}
        \label{fig:path(ii)}
    \end{minipage}
    \\[10em]
    \begin{minipage}{0.5\textwidth}
        \centering
        \includegraphics[scale=0.25]{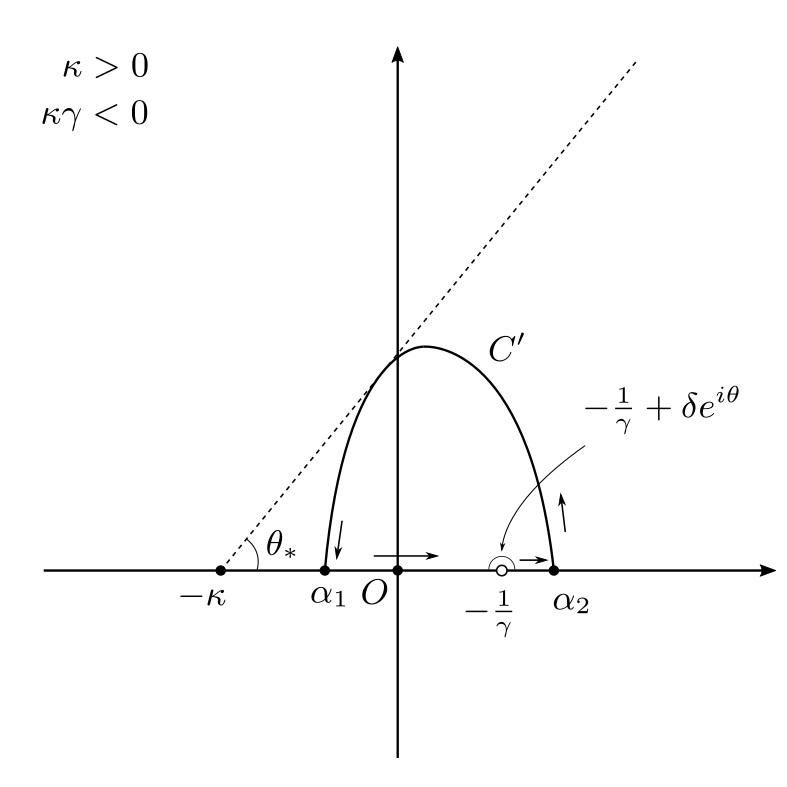}
        \caption{
        {Curve $C'$ in case (i)}}
        \label{fig:mpath(i)}
    \end{minipage}
    &
    \begin{minipage}{0.5\textwidth}
        \centering
        \includegraphics[scale=0.25]{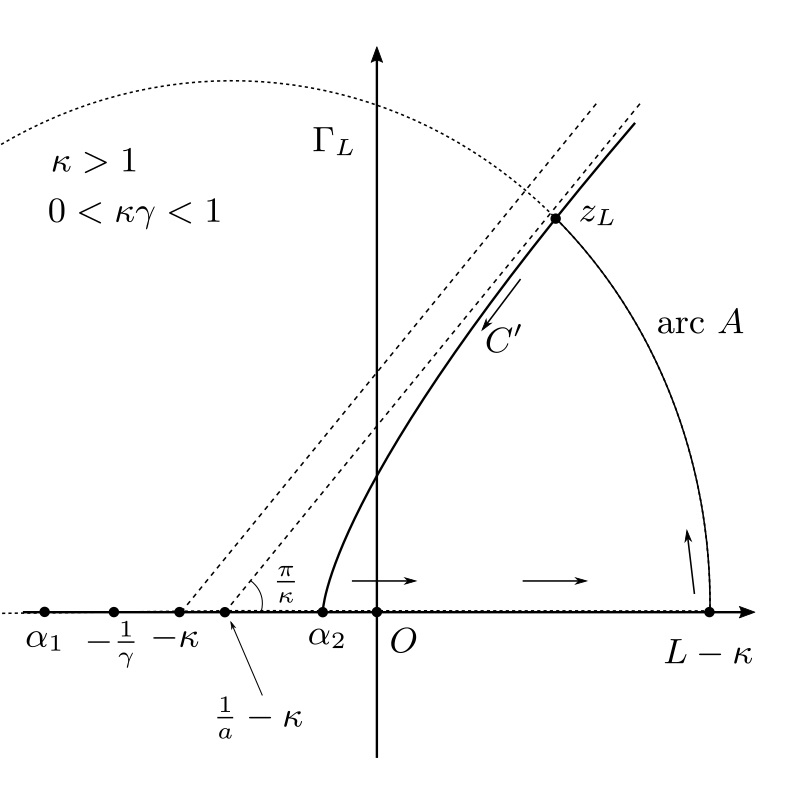}
        \caption{{Curve $C'$ in case (ii)}}
        \label{fig:mpath(ii)}
    \end{minipage}
    \\[10em]
    \begin{minipage}{0.5\textwidth}
        \centering
        \includegraphics[scale=0.25]{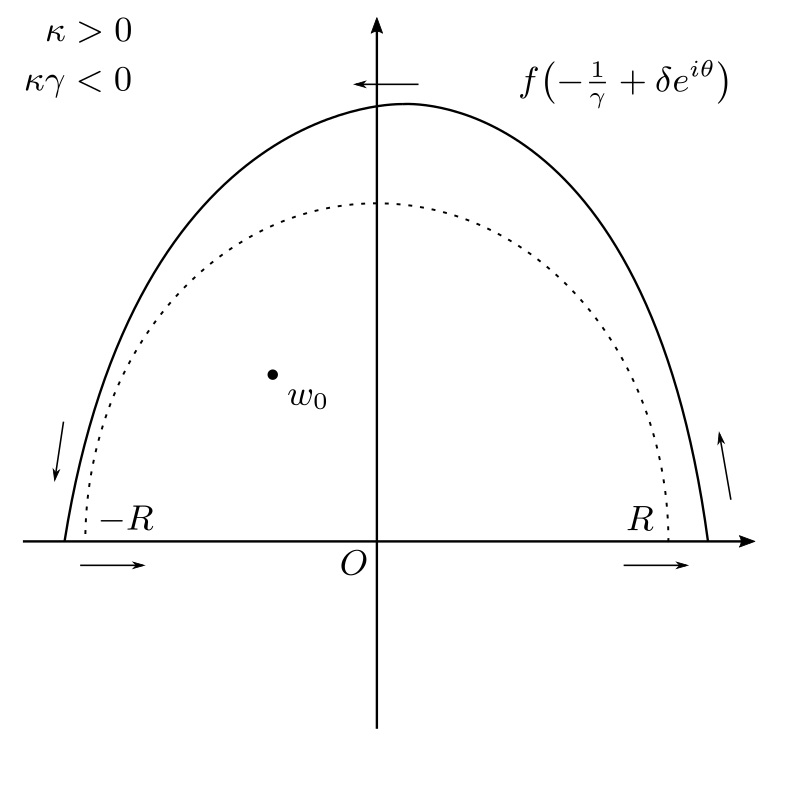}
        \caption{{Curve $f(C')$ in case (i)}}
        \label{fig:fpath(i)}
    \end{minipage}
    &
    \begin{minipage}{0.5\textwidth}
        \centering
        \includegraphics[scale=0.25]{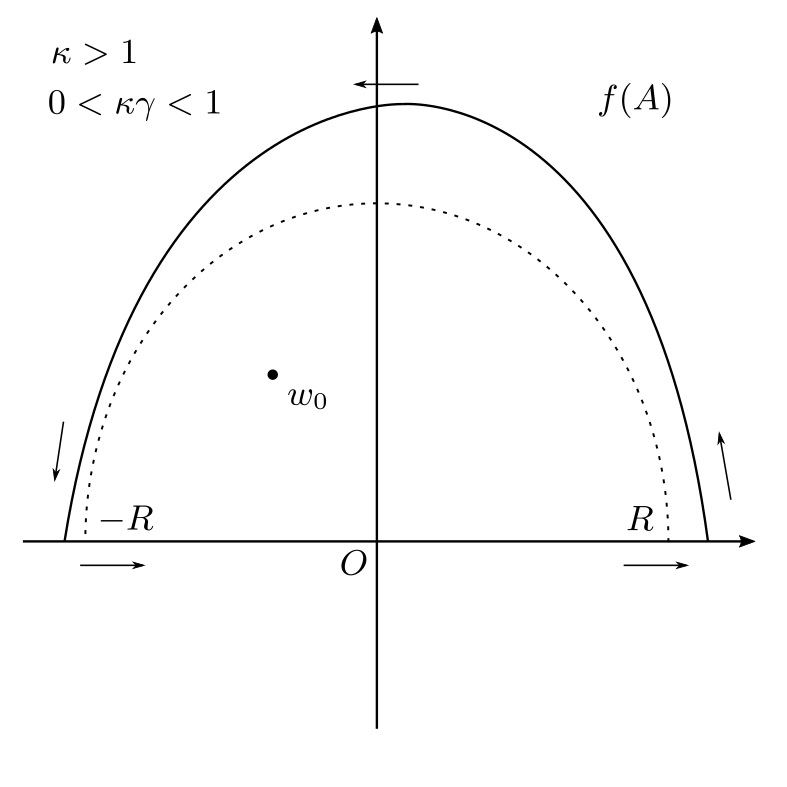}
       \caption{{Curve $f(C')$ in case (ii)}}
        \label{fig:fpath(ii)}
    \end{minipage}
    \end{tabular}
\end{figure}

\newpage

\begin{itemize}
\item Graphs of $f_{\kappa,\gamma}(x)$ for real $x$.
\begin{figure}[h]
    \centering
    \begin{tabular}{cc}
    \begin{minipage}{0.4\textwidth}
        \centering
        \includegraphics[scale=0.2]{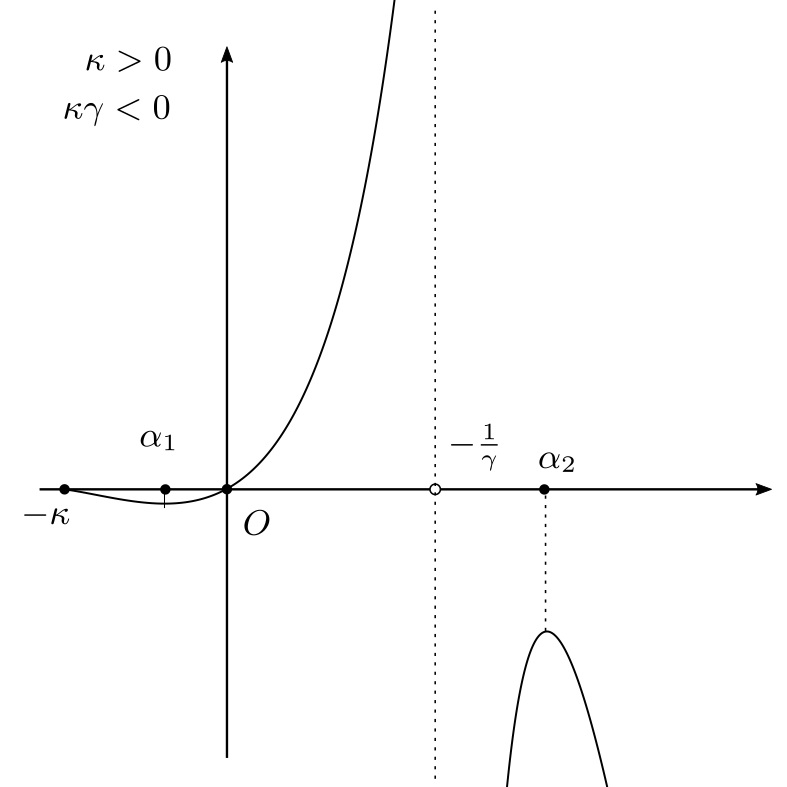}
        \caption{$f(x)$ for  $x\ge -\kappa$, case  (i)}
        \label{fig:graph(i)}
    \end{minipage}
    &
    \begin{minipage}{0.4\textwidth}
        \centering
       \includegraphics[scale=0.2]{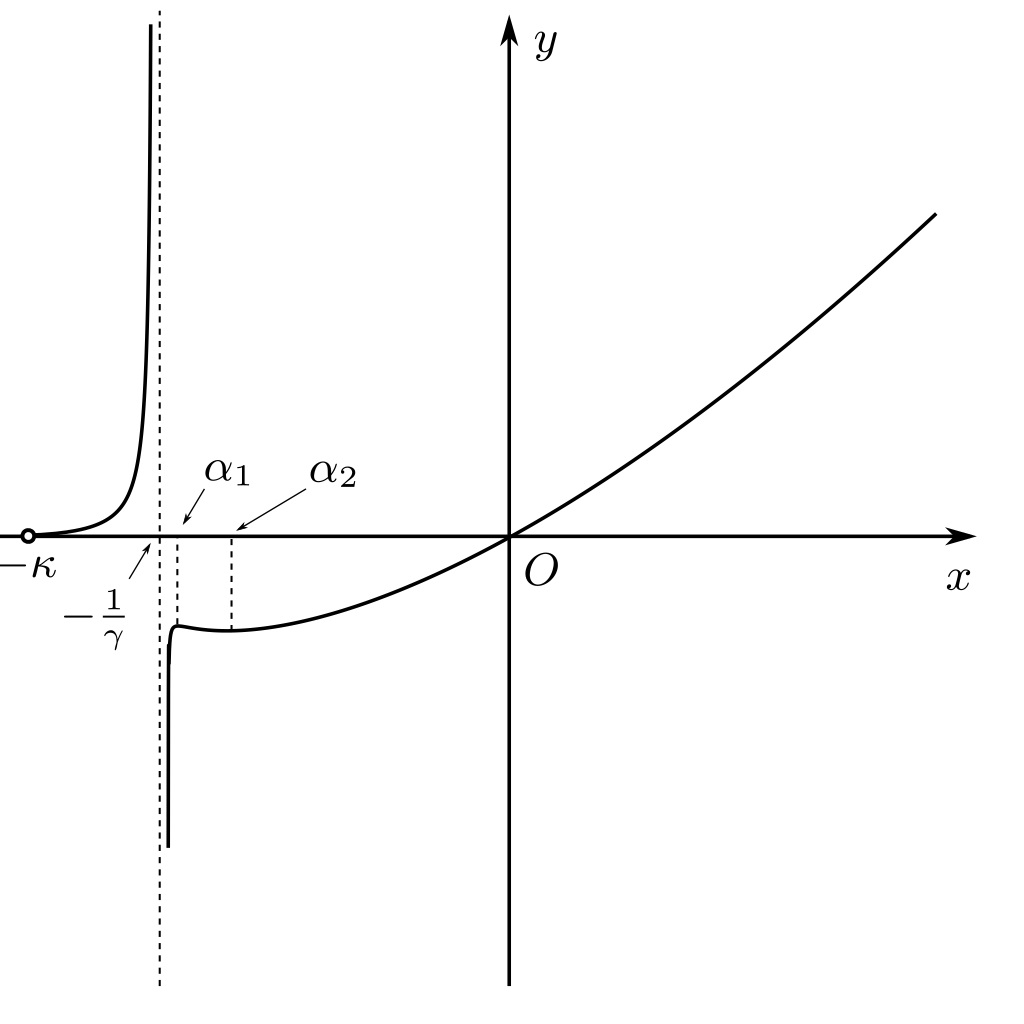}
        %\caption{$f(x)$ for  $x\ge -\kappa$, case  (i)}
        %\label{fig:graph(i)}
    \end{minipage}
    \\
    \begin{minipage}{0.4\textwidth}
        \centering
        \includegraphics[scale=0.2]{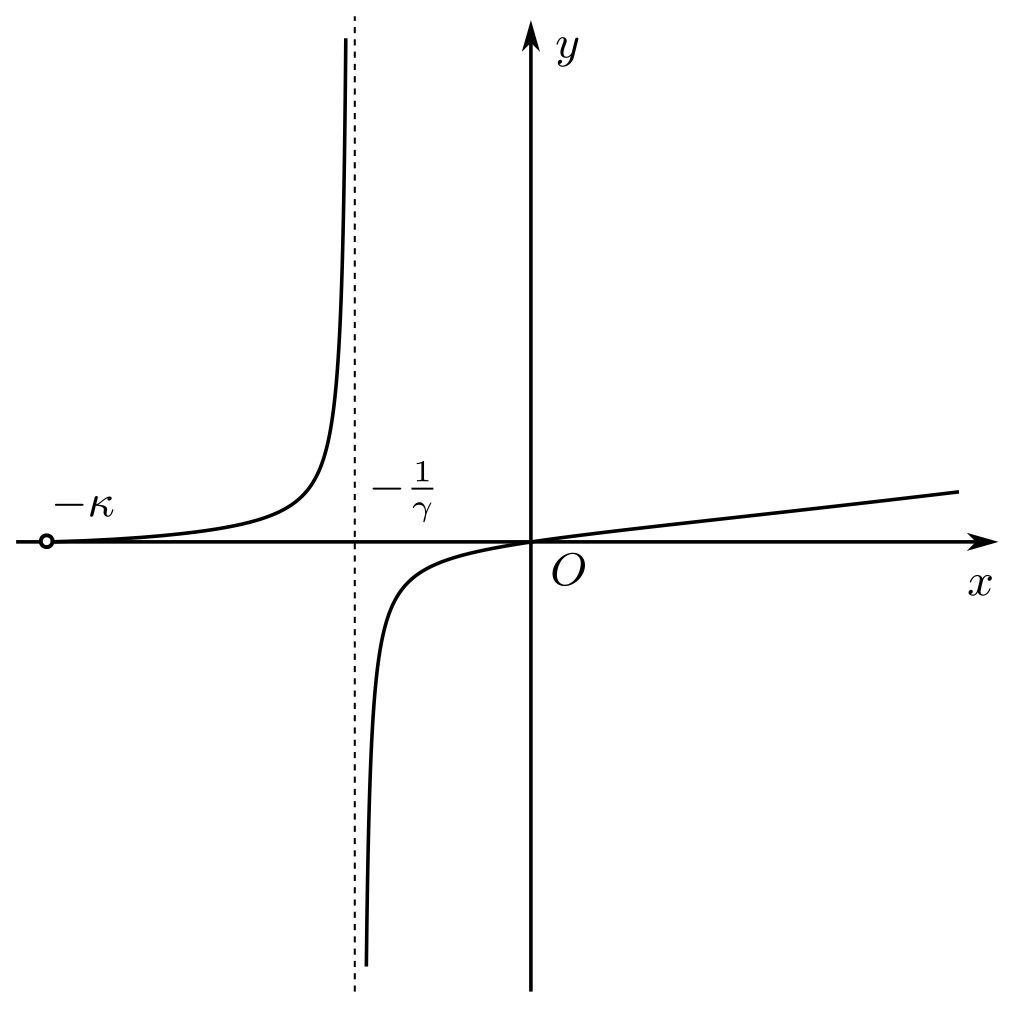}
        %\caption{$f(x)$ for  $x\ge -\kappa$, case  (i)}
        %\label{fig:graph(i)}
    \end{minipage}
    &
    \begin{minipage}{0.4\textwidth}
        \centering
        \includegraphics[scale=0.2]{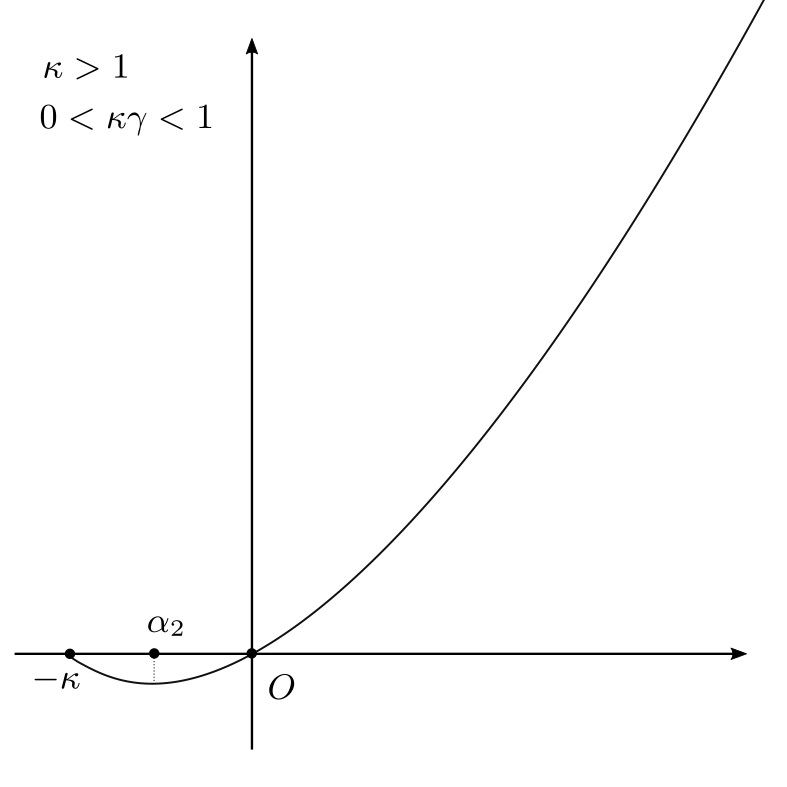}
        \caption{$f(x)$ for  $x\ge -\kappa$, case  (ii)}
        \label{fig:graph(ii)}
    \end{minipage}
    \end{tabular}
\end{figure}

%Graphs of $H_\alpha$

\item Graphs of $F_\kappa$

\begin{center}
    \begin{tabular}{cc}
        $0<\kappa\le \frac12$&
        $\frac12<\kappa<1$\\
        \includegraphics[scale=0.35]{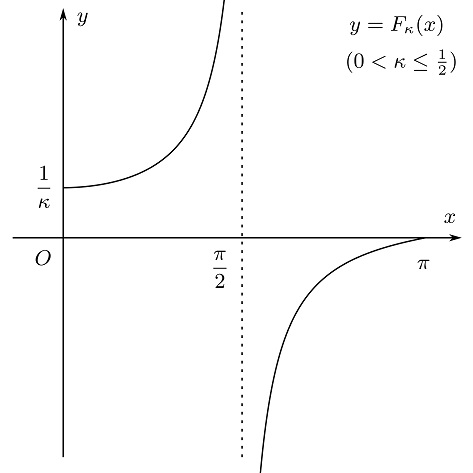}&
        \includegraphics[scale=0.35]{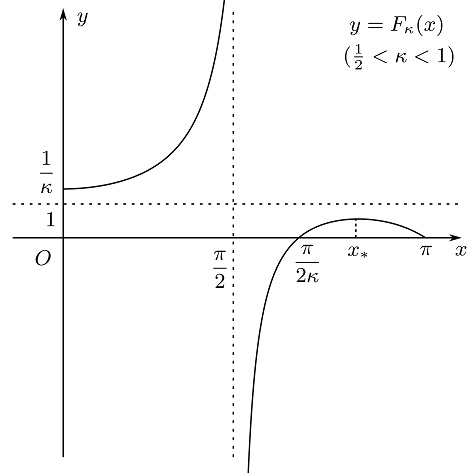}\\[1ex]
        $1<\kappa<2$&
        $\kappa\ge 2$\\
        \includegraphics[scale=0.35]{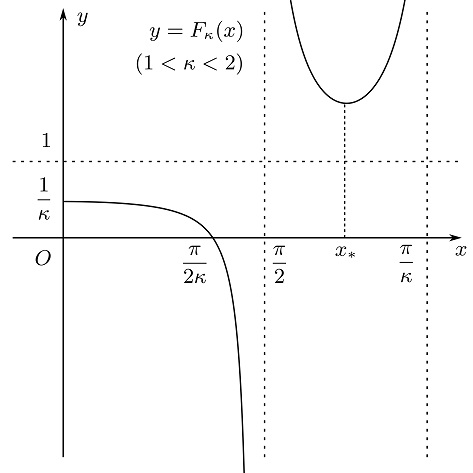}&
        \includegraphics[scale=0.35]{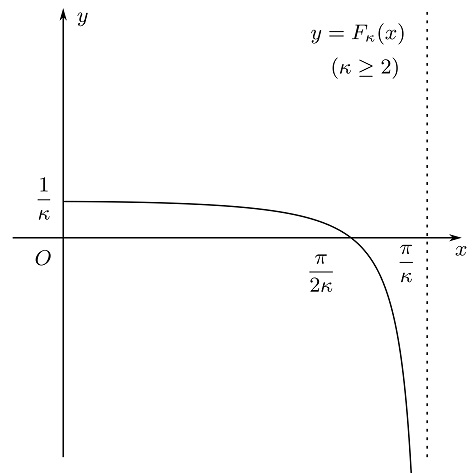}
    \end{tabular}
\end{center}
% \begin{center}
%     \begin{tabular}{cc}
%         $0<\kappa\le \frac14$&
%         $\frac14<\kappa<1$\\
%         \includegraphics[scale=0.4]{Graphs/Fkg/Fkg_1.jpg}&
%         \includegraphics[scale=0.4]{Graphs/Fkg/Fkg_2.jpg}\\[1ex]
%         $1<\kappa<2$&
%         $\kappa\ge 2$\\
%         \includegraphics[scale=0.4]{Graphs/Fkg/Fkg_3.jpg}&
%         \includegraphics[scale=0.4]{Graphs/Fkg/Fkg_4.jpg}
%     \end{tabular}
% \end{center}

%Graphs of $J_\kappa$

\newpage

\item Shapes of $\Omega$

\begin{center}
    \begin{tabular}{cc}
         The case of $\gamma<0$&The case of $\kappa>1$ and $0<\gamma<\frac1\kappa$\\
         \includegraphics[scale=0.4]{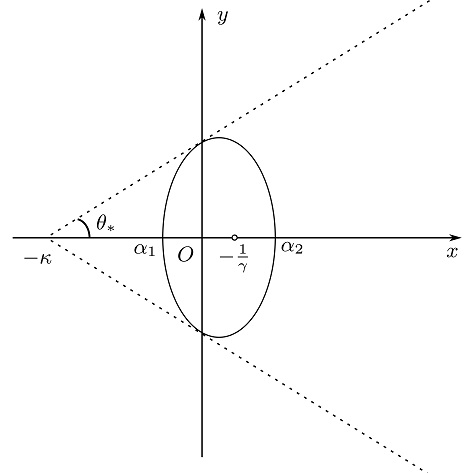}
         &
         \includegraphics[scale=0.4]{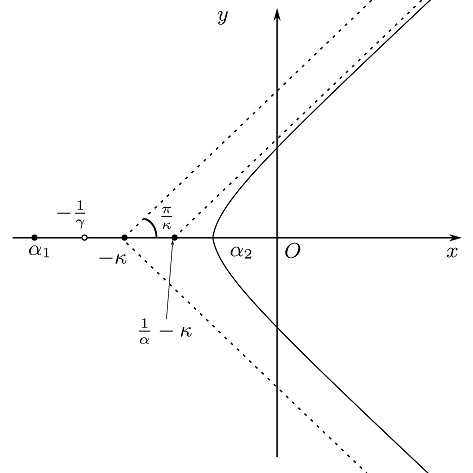}\\
         The case of $\kappa>1$, $a>1$ and $D(0)\ge 0$& The case of $\kappa>1$ and $D(0)<0$ \\
         \includegraphics[scale=0.4]{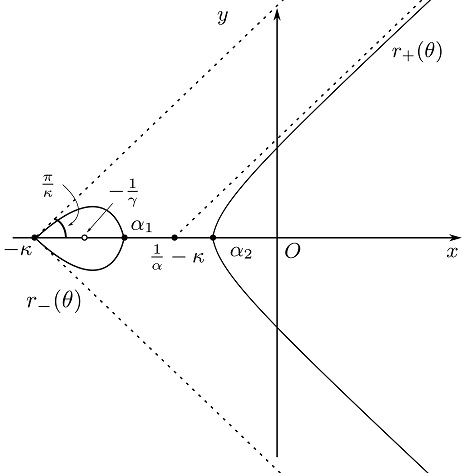}
         &
         \includegraphics[scale=0.4]{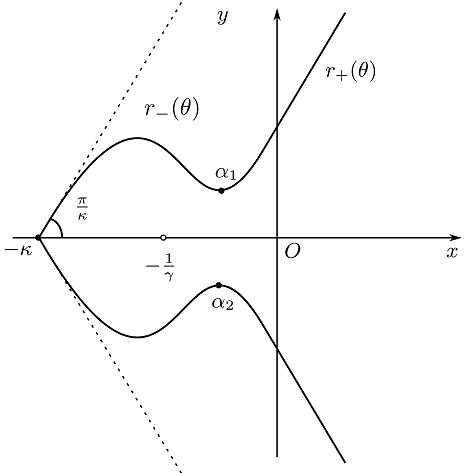}\\
         The case of $\kappa=1$ and $0<\gamma<\frac14$& 
         The case of $\kappa=\infty$ and $\gamma>\frac14$\\
         \includegraphics[scale=0.4]{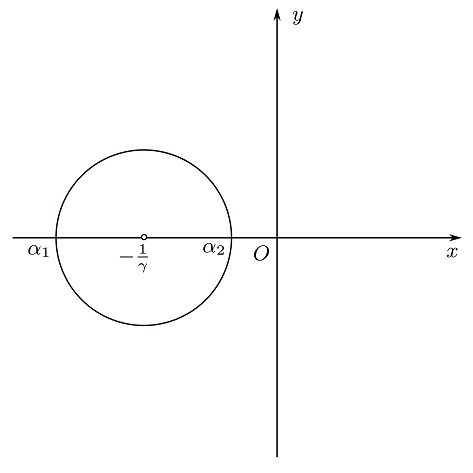}
         &
         \includegraphics[scale=0.4]{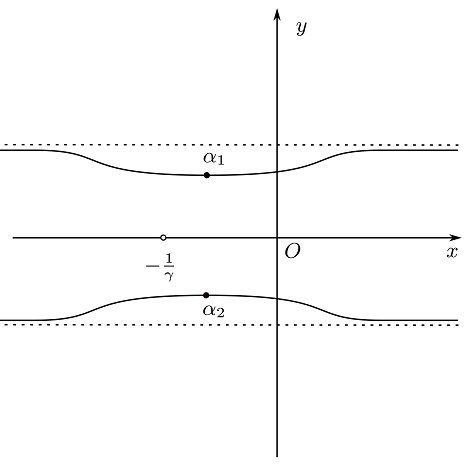}
    \end{tabular}
\end{center}
\end{itemize}
% \begin{center}
%     \begin{tabular}{c@{}c}
%     The case (1)&The case (2)\\
%     \includegraphics[scale=0.45]{Graphs/Omega_1.jpg}&
%     \includegraphics[scale=0.45]{Graphs/Omega_2.jpg}\\[1em]
%     The case (3)&The case (4-b)\\
%     \includegraphics[scale=0.45]{Graphs/Omega_3.jpg}&
%     \includegraphics[scale=0.45]{Graphs/Omega_4_b.jpg}\\[1em]
%     The case (5-a)&The case (5-b)\\
%     \includegraphics[scale=0.45]{Graphs/Omega_5_a.jpg}&
%     \includegraphics[scale=0.45]{Graphs/Omega_5_b.jpg}
%     \end{tabular}
% \end{center}

\end{document}